\documentclass{amsart}
\usepackage[utf8]{inputenc}
\usepackage{amssymb,latexsym}
\usepackage{amsmath}
\usepackage{graphicx}
\usepackage{hyperref}
\usepackage{textcomp}
\usepackage[dvipsnames]{xcolor}
\usepackage{tabularx}
\usepackage{array}

\usepackage{amsthm,amssymb,enumerate,graphicx, tikz}
\usepackage{amscd}
\usepackage{setspace}
\usepackage{comment}
\usepackage{hyperref}
\usepackage{cleveref}
\usepackage{caption}
\usepackage{eucal}

\def\@logofont{\footnotesize}
\def\@setaddresses{\par
  \nobreak \begingroup
  \footnotesize
  \def\author##1{\nobreak\addvspace\bigskipamount}%
  \def\\{\par\nobreak}%
  \interlinepenalty\@M
  \def\address##1##2{\begingroup
    \par\addvspace\bigskipamount\indent
    \@ifnotempty{##1}{(\ignorespaces##1\unskip) }%
    {\scshape\ignorespaces##2}\par\endgroup}%
  \def\curraddr##1##2{\begingroup
    \@ifnotempty{##2}{\nobreak\indent\curraddrname
      \@ifnotempty{##1}{, \ignorespaces##1\unskip}\/:\space
      ##2\par}\endgroup}%
  \def\email##1##2{\begingroup
    \@ifnotempty{##2}{\nobreak\indent\emailaddrname
      \@ifnotempty{##1}{, \ignorespaces##1\unskip}\/:\space
      \ttfamily##2\par}\endgroup}%
  \def\urladdr##1##2{\begingroup
    \def~{\char`\~}%
    \@ifnotempty{##2}{\nobreak\indent\urladdrname
      \@ifnotempty{##1}{, \ignorespaces##1\unskip}\/:\space
      \ttfamily##2\par}\endgroup}%
  \addresses
  \endgroup
}
\renewcommand*\subjclass[2][2010]{%
  \def\@subjclass{#2}%
  \@ifundefined{subjclassname@#1}{%
    \ClassWarning{\@classname}{Unknown edition (#1) of Mathematics
      Subject Classification; using '2000'.}%
  }{%
    \@xp\let\@xp\subjclassname\csname subjclassname@#1\endcsname
  }%
}

\usepackage{graphicx,amsmath,amssymb}
\usepackage{algorithm}
\usepackage{mathdots, comment}
\usepackage[noend]{algpseudocode}

\newtheorem{theorem}{Theorem}[section]

\newtheorem*{theorem*}{Theorem}
\newtheorem{proposition}[theorem]{Proposition}
\newtheorem{lemma}[theorem]{Lemma}

\newtheorem{corollary}[theorem]{Corollary}

\theoremstyle{definition}
\newtheorem{definition}[theorem]{Definition}

\theoremstyle{remark}
\newtheorem{remark}[theorem]{Remark}
\newtheorem{example}[theorem]{Example}

\begin{document}
\title[Matchings and product growth]{Matchings and product growth in modular abelian independence groups}

\author[M. Aliabadi, J. Losonczy]{Mohsen Aliabadi$^{1}$ \and Jozsef Losonczy$^{2,*}$}
\thanks{$^1$Department of Mathematics, Clayton State University, 
2000 Clayton State Boulevard, Lake City, Georgia 30260, USA.  \url{maliabadi@clayton.edu}.\\
$^2$Department of Mathematics, Long Island University,
720 Northern Blvd, Brookville, New York 11548, USA. \url{Jozsef.Losonczy@liu.edu}.}
\thanks{$^*$Corresponding Author.}

\thanks{\textbf{Keywords and phrases.} Independence group, matching, modular finitary matroid, Rado's theorem, $e$-transform, flat, rank. }
\thanks{\textbf{2020 Mathematics Subject Classification}. Primary: 05B35; Secondary: 05D15; 05E16; 15A03}

\begin{abstract}
\begin{sloppypar}
We unify two matching theories, one for finite subsets of groups and the other for finite-dimensional subspaces in a field extension. To achieve this, we study groups equipped with a compatible finitary matroid structure, termed here independence groups.  Applying Rado’s independent transversal theorem, we derive necessary and sufficient rank criteria for matchability between finite-rank sets. In the setting of a modular abelian independence group \( G \), we develop an analogue of the $e$-transform from additive number theory, derive structural matching criteria, and characterize a global matching property by the absence of a submonoid \(H\) satisfying
\[
1<\rho(H)<\rho(G) \quad \text{ and } \quad \rho(H) < \infty,
\]
where \( \rho \) denotes rank. Examples of modular abelian independence groups are given and examined in the matching context.  Arising from this matching theory, but formulated without any reference to it, is a product-growth bound that generalizes the Cauchy--Davenport theorem: we define a parameter \(\mu(G)\) and prove that 
\[
\rho(XY)
\geq \min\{\mu(G),\rho(X)+\rho(Y)-1\}
\]
for all nonempty finite-rank subsets \( X, Y \) of \( G \). Furthermore, \( \rho(XY) \) is shown to be controlled from below by a submonoid of \( G \) that stabilizes a flat, a phenomenon reminiscent of Kneser's theorem.  
\end{sloppypar}
\end{abstract}

\maketitle

\section{Introduction}\label{Intro}

Let \(G\) be a group, written multiplicatively, and let \(A,B\subseteq G\) be finite subsets with \(|A|=|B|\). A \emph{matching} from \(A\) to \(B\) is a bijection \(f\colon A\to B\) such that \(af(a)\notin A\) for every \(a\in A\). Necessarily, \(1\notin B\): otherwise, \(f(a)=1\) for some \(a\in A\), and then \(af(a)=a\in A\). The group \(G\) is said to have the \emph{matching property} if every pair of finite subsets \(A,B\) satisfying \(|A|=|B|\) and \(1\notin B\) admits a matching from \(A\) to \(B\).

The theory of matchings in groups was initiated in \cite{Losonczy 1}, motivated by a problem of Wakeford concerning canonical forms for homogeneous polynomials \cite{Wakeford}. The groups having the matching property were first characterized in the abelian setting in \cite{Losonczy 2}: these are precisely the groups that are torsion-free or of prime order. This result was subsequently extended to arbitrary groups in \cite{Eliahou 1}. Other work has addressed the enumeration of matchings, acyclic matchings, structural criteria for matchability, and decomposition theorems for unmatchable pairs \cite{Aliabadi 0,Aliabadi,Aliabadi 5,AliabadiLosonczy,Alon,Hamidoune}.

A parallel matching theory for finite-dimensional subspaces in a field extension was introduced by Eliahou and Lecouvey~\cite{Eliahou 2}. Let $K\subseteq L$ be a field extension, and let $A,B\subseteq L$ be $n$-dimensional $K$-subspaces. An ordered basis $\{a_1,\ldots,a_n\}$ of $A$ is \emph{matched} to an ordered basis $\{b_1,\ldots,b_n\}$ of $B$ if
\[
a_i^{-1}A \cap B
\,\subseteq\,
\operatorname{span}_K\{b_j : j \neq i\}
\quad \text{for all }i=1,2,\ldots,n.
\]
The subspace $A$ is \emph{matched} to $B$ if every ordered basis of $A$ can be matched to some ordered basis of $B$.

A matroidal approach to the topic was presented in \cite{AliabadiZerbib}, where matchings are considered between matroids whose ground sets are contained in an abelian group. The framework developed here, although also involving ideas from matroid theory, is different: we place a single translation-invariant independence structure on the entire group.  The compatibility between the independence structure and group operation makes it possible to work systematically with rank, closure, flats, and multiplicatively invariant substructures. This leads to structural matching criteria and product growth theorems which are not available for arbitrary matroids embedded in a group.

Thus we define an independence group to be a group \(G\) equipped with an independence structure \(\mathcal I\) such that \( gA \in \mathcal{I} \) for all \( g \in G \) and \( A \in \mathcal{I} \). Its rank function and closure operator are denoted by \(\rho\) and \(\textup{cl}\), respectively. By taking \( \mathcal{I} \) to be the universal independence structure, in which every subset of \( G \) is independent, we recover the original matching theory for finite subsets of groups. If \(K\subseteq L\) is a field extension and we take \( G \) to be the multiplicative group \(L^\times\), then we retrieve the matching theory arising from \cite{Eliahou 2} if we take \( \mathcal{I} \) to be the collection of all subsets of \( G \) that are linearly independent over \( K \). 

In addition to the above two examples, we construct independence groups from the projective groups
\[
L^\times/K^\times,
\]
from coset partitions, and from the cycle spaces of finite simple graphs; see Propositions~\ref{projective independence group} and \ref{coset partition proposition}, and Example~\ref{cycle space independence example}.  In Example~\ref{projective C21 example}, we show that two different independence structures on the same abstract group can produce different matching behavior.  For each family of independence groups considered, we give formulas for the rank and closure, and we show that the structure \( \mathcal{I} \) has the property of being modular. 

Our first results on matchings are derived from Rado's theorem on independent transversals \cite{Rado1942,Mirsky}, the matroidal analogue of Hall's marriage theorem \cite{Hall}.  In an independence group \( G \), let \(A\) have finite rank \(n\), let \(\mathcal A\) be a basis of \(A\), and let \(\mathcal B\) be an independent set of cardinality \(n\). Theorem~\ref{specialized} shows that \(\mathcal A\) is matchable to \(\mathcal B\), relative to \(A\), if and only if
\[
\rho\left(
\bigcap_{a\in J}(a^{-1}A\cap\mathcal B)
\right)
\leq n-|J|
\]
for every \(J\subseteq\mathcal A\). Theorem~\ref{existence condition} then gives the corresponding criterion for the matchability of \( \mathcal{A} \) to every basis of
a set \(B\) of rank \(n\).  

In order to develop a structural theory for matchings in independence groups, we introduce a generalization of the $e$-transform from additive number theory (Theorem \ref{Dyson G}). Using this tool, which requires that the independence group be abelian and modular, we first give a structural characterization of matchability.  If \(A\) and \(B\) are flats of rank \(n\), then
Theorem~\ref{matching characterization} states that \(A\) is matched to \(B\) if and only if
\[
\rho(S)+\rho(R\cap B)\leq n
\]
whenever \(S\subseteq A\) and \(R\subseteq\textup{cl}(B\cup\{1\})\) are nonempty flats satisfying \( SR = S \). With this result, one easily recovers all previously known structural matching criteria for finite subsets of abelian groups and for finite-dimensional subspaces in field extensions.  

We next consider global matching properties. A flat of finite rank is shown to be self-matched precisely when it does not contain the group identity element
(Theorem~\ref{self matching criterion}). More generally, we say that an independence group has the \emph{matching property} if every pair \((A,B)\) of finite-rank flats satisfying \(\rho(A)=\rho(B) \) and \(1\notin B\) is matched.  Theorem~\ref{MMP} shows that a modular abelian independence group \( G \) has the matching property if and only if it contains no submonoid \(H\) satisfying
\[
1<\rho(H)<\rho(G)
\qquad\text{and}\qquad
\rho(H)<\infty.
\]

This structural obstruction also controls product growth from below. For an independence group \(G\), we let \(\mu(G)\) denote the smallest value of \( \rho(H) \), where \( H \) runs over all finite-rank submonoids of \( G \) satisfying \( \rho(H) > 1 \), with \(\mu(G)=\infty\) when no such submonoid exists. Our main product result, Theorem~\ref{threshold product growth}, states that
\[
\rho(XY)
\,\geq\,
\min\bigl\{
\mu(G),\rho(X)+\rho(Y)-1
\bigr\}
\]
for all nonempty finite-rank subsets \(X,Y\) of \( G \). It also provides a structural explanation for every failure of expected rank growth: if
\[
\rho(XY)<\rho(X)+\rho(Y)-1,
\]
then \(\textup{cl}(XY)\) contains a nonempty flat \(S\) stabilized by a finite-rank flat submonoid \(M\), in the sense that
\[
SM=S,
\]
and \( \rho(M) > 1\).
In this we see a rank-theoretic form of the stabilizer phenomenon underlying classical sum-set results such as the Cauchy--Davenport and Kneser theorems \cite{Davenport 1, Davenport 2, Kneser1953}.

Our proof of Theorem~\ref{threshold product growth} relies on the generalized $e$-transform (Theorem~\ref{Dyson G}). Although motivated by matching theory, Theorems~\ref{Dyson G} and \ref{threshold product growth} are stated without any reference to it and seem likely to be of independent interest. 

In Corollary~\ref{low rank matching}, it is shown that every admissible pair of flats with rank less than \( \mu(G) \) is matchable.  The threshold \( \mu(G) \) is computed explicitly for two principal families. For the projective independence group associated with a finite field extension \(K\subsetneq L\), Corollary~\ref{projective threshold and matching property} gives
\[
\mu(G)
=
\min\bigl\{
[F:K]:
K\subsetneq F\subseteq L
\text{ is an intermediate field}
\bigr\}.
\]
It follows that a projective independence group has the matching property precisely when the extension \(K \subsetneq L\) has no nontrivial proper intermediate field.

For the coset-partition independence group \((G,\mathcal I_H)\) associated with a subgroup \(H \) of \(G\), where \(G\) is abelian and \(G/H\) is finite, Proposition~\ref{coset partition threshold} gives
\[
\mu(G)
=
\min\bigl\{
|Q|:Q\text{ is a nontrivial subgroup of }G/H
\bigr\}
\]
when \( G \neq H \), and \( \mu(G) = \infty \) otherwise. Thus \(\mu(G)\) is the smallest prime divisor of \(|G/H|\) if \( G \neq H \), and \(G\) has the matching property precisely when \(G/H\) is trivial or has prime order. In
Example~\ref{coset threshold example}, a nonprojective independence group of this type is exhibited, and it is shown that the bound in Theorem~\ref{threshold product growth} is achieved for a certain pair of sets. 

Finally, Corollary~\ref{cauchy davenport equivalence} shows that the matching property holds provided the Cauchy--Davenport-type inequality
\[
\rho(XY)
\,\geq\,
\min\bigl\{
\rho(G),\rho(X)+\rho(Y)-1
\bigr\}
\]
is satisfied for all nonempty finite-rank subsets \(X,Y\) of \( G \). 

The paper is organized as follows. Section~\ref{Prelims} reviews finitary matroids, rank, and closure, and then introduces independence groups. A definition of matching for independence groups is formulated and reconciled with preexisting notions in Section~\ref{MatchedSets}. Section~\ref{transversal section} establishes general Rado-type matching criteria. Section~\ref{ModSec} develops a theory of modular independence groups and constructs the principal examples. In Section~\ref{e-transform and char sec}, a generalized $e$-transform is introduced and used to establish structural characterizations of matchability. Finally, Section~\ref{self} treats self-matching, characterizes the matching property, establishes the product-growth theorem, and computes the submonoid-rank threshold for projective and coset-partition independence groups.

\section{Preliminaries} \label{Prelims}

In this section we establish the basic setup for our theory.  The main object of interest will be a group with a compatible finitary matroid structure. We provide below all of the definitions and foundational results that are needed to get started.  Additional definitions will be given later, as needed.

We denote the power set of any set \( E \) by \( 2^E \).  If \( K \) is a field, we write \( K^{\times} \) for its multiplicative group \( K \setminus \{ 0 \} \).

A \emph{finitary matroid} is a pair \( (E, \mathcal{I}) \) consisting of a set \( E \) and a nonempty collection \( \mathcal{I} \) of subsets of \( E \) satisfying the following three axioms:
\begin{itemize}
\item[(I1)] If \( A \in \mathcal{I} \) and \( B \subseteq A \), then \( B \in \mathcal{I} \). 
\item[(I2)] If \( A \) and \( B \) are finite elements of \( \mathcal{I} \) and \( |B| = |A| + 1 \), then there exists \( x \in B \setminus A \) such that \( \{ x \} \cup A \in \mathcal{I} \).
\item[(I3)] If \( A  \subseteq E \) and every finite subset of \( A \) belongs to \( \mathcal{I} \), then \( A \in \mathcal{I} \).
\end{itemize}
The collection \( \mathcal{I} \) is called an \emph{independence structure} on \( E \). The elements of \( \mathcal{I} \) are the \emph{independent sets}, and \( E \) is the \emph{ground set}.  If \( A \subseteq E \) and \( A \notin \mathcal{I} \), we call \( A \) a \emph{dependent set}.  A \emph{loop} is an element \( x \) of \( E \) such that \( \{ x \} \) is a dependent set.  We remark that finitary matroids are also known as \emph{independence spaces}. 

We call (I1) the \emph{hereditary axiom}, (I2) the \emph{exchange axiom}, and (I3) the \emph{finite-character axiom}.

\begin{example} \label{universal structure}
Let \( E \) be any set and take \( \mathcal{I} = 2^E \).  Then the pair \( (E,\mathcal{I}) \) is a finitary matroid.  We call \( \mathcal{I} \) the \emph{universal structure} on \( E \).  
\end{example}

\begin{example} \label{linear pi}
Let \( K \subseteq L \) be a field extension and let \( E \subseteq L \).  Define \( \mathcal{I} \) to be the collection of all subsets \( S \) of \( E \) such that \( S \) is linearly independent over \( K \). Then \( (E,\mathcal{I}) \) is a finitary matroid.  Axiom (I2) holds on account of the Steinitz exchange lemma from linear algebra, while (I3) follows immediately from the definition of linear independence.
\end{example}

For any finitary matroid \( (E,\mathcal{I}) \) and any \( A \subseteq E \), we define 
\[ 
\mathcal{I}|_A = \{ B \in \mathcal{I} : B \subseteq A \}. 
\]  
The pair \( (A,\mathcal{I}|_A) \) is then also a finitary matroid.  

A \emph{basis} of a finitary matroid \( (E,\mathcal{I}) \) is an independent set \( \mathcal{A} \) which is maximal in \( \mathcal{I} \) with respect to set-theoretic inclusion. Sometimes we speak of a basis \( \mathcal{A} \) of a subset \( A \) of \( E \).  When we do, we mean that \( \mathcal{A} \) is a basis of \( (A, \mathcal{I}|_A) \).

The following result gathers together some useful, well-known properties of bases.

\begin{proposition}  \label{basis properties}
Let \( (E,\mathcal{I}) \) be a finitary matroid and let \( A \) be a subset of \( E \). Then the following statements hold:
\begin{itemize}
\item[(i)] There exists at least one basis of \( A \).
\item[(ii)]  Any two bases of \( A \) have the same cardinality.
\item[(iii)]  Let \( D \) be an independent subset of \( A \), and let \( \mathcal{A} \) be a basis of \( A \). Then there exists a basis \( \mathcal{A}^* \) of \( A \) such that \( D \subseteq \mathcal{A}^* \subseteq D \cup \mathcal{A} \).
\item[(iv)] Let \( D \) be an independent subset of \( A \).  Then \( D \) can be extended to a basis of \( A \), i.e., there exists a basis \( \mathcal{A} \) of \( A \) such that \( D \subseteq \mathcal{A} \).
\end{itemize}
\end{proposition}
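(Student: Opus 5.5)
The plan is to derive all four statements from Zorn's lemma together with axioms (I1)--(I3), proving (iii) first since (i) and (iv) follow from it.

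For (iii), I would look at the family
\[
\mathcal{F} = \{ C \in \mathcal{I} : D \subseteq C \subseteq D \cup \mathcal{A} \},
\]
ordered by inclusion. It is nonempty because \( D \in \mathcal{F} \). Take a chain in \( \mathcal{F} \) and let \( U \) be its union. Any finite subset of \( U \) sits inside a single member of the chain, so it is independent by (I1). By (I3), \( U \) is independent, and clearly \( D \subseteq U \subseteq D \cup \mathcal{A} \). Zorn's lemma therefore gives a maximal element \( \mathcal{A}^* \in \mathcal{F} \).

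It remains to show that \( \mathcal{A}^* \) is maximal among all independent subsets of \( A \); this is the main obstacle. By maximality in \( \mathcal{F} \), \( \mathcal{A}^* \cup \{a\} \notin \mathcal{I} \) for every \( a \in \mathcal{A} \setminus \mathcal{A}^* \). I would first show that if \( C \in \mathcal{I} \), \( x \notin C \), and \( C \cup \{x\} \notin \mathcal{I} \), then some finite \( C_0 \subseteq C \) has \( C_0 \cup \{x\} \) dependent. This holds by (I3): a dependent set has a finite dependent subset, and that subset must contain \( x \). So each \( a \in \mathcal{A} \) "depends finitely" on \( \mathcal{A}^* \).

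Now suppose \( y \in A \setminus \mathcal{A}^* \) with \( \mathcal{A}^* \cup \{y\} \) independent. Because \( \mathcal{A} \) is a basis of \( A \), \( \mathcal{A} \cup \{y\} \) is dependent (or \( y \in \mathcal{A} \), which is already excluded), so there is a finite \( F \subseteq \mathcal{A} \) with \( F \cup \{y\} \) dependent. For each \( a \in F \setminus \mathcal{A}^* \), choose a finite \( C_a \subseteq \mathcal{A}^* \) witnessing dependence, and set
\[
C = (F \cap \mathcal{A}^*) \cup \bigcup_a C_a,
\]
a finite subset of \( \mathcal{A}^* \). This reduces everything to a finite matroid on \( E' = C \cup F \cup \{y\} \). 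There, \( C \) spans every element of \( F \), and \( F \) spans \( y \). Transitivity of span follows from the usual finite rank theory built from (I2), so \( C \) spans \( y \). That contradicts the independence of \( C \cup \{y\} \). Hence \( \mathcal{A}^* \) is a basis of \( A \).

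With (iii) in hand, (iv) is immediate once some basis of \( A \) exists. For (i), I would run the same Zorn argument on \( \{ C \in \mathcal{I} : C \subseteq A \} \). This family contains \( \emptyset \), because \( \mathcal{I} \) is nonempty and closed under subsets. Maximal elements of this family are exactly the bases of \( A \).

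For (ii), let \( \mathcal{A} \) and \( \mathcal{B} \) be bases of \( A \).

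\emph{Case 1: one basis is finite.} If \( |\mathcal{A}| = n \) is finite and \( |\mathcal{B}| > n \), pick \( B' \subseteq \mathcal{B} \) with \( |B'| = n+1 \). Applying (I2) repeatedly, as in the finite theory, enlarges \( \mathcal{A} \) by an element of \( B' \subseteq A \), contradicting the maximality of \( \mathcal{A} \). Thus \( |\mathcal{B}| \leq n \), and by symmetry the cardinalities are equal.

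\emph{Case 2: both bases are infinite.} For each \( b \in \mathcal{B} \), choose a finite \( C_b \subseteq \mathcal{A} \) spanning \( b \), using the finite-dependence fact above. The union of the \( C_b \) spans all of \( \mathcal{B} \), hence by transitivity all of \( A \). Since \( \mathcal{A} \) is independent, this forces \( \bigcup_b C_b = \mathcal{A} \). It follows that \( |\mathcal{A}| \leq \aleph_0 \cdot |\mathcal{B}| = |\mathcal{B}| \), and the reverse inequality follows by symmetry.
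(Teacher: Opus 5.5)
Your proof is correct. The paper gives no argument of its own for (i)--(iii). It defers them to Chapter 7 of \cite{Mirsky}, then derives (iv) from (i) and (iii) exactly as you do, so the comparison is between your self-contained argument and that citation.

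The Zorn steps are routine uses of (I3). The real content lies in two places: showing that a maximal element of $\mathcal{F}$ is maximal in all of $\mathcal{I}|_A$, and Case 2 of (ii). Both need transitivity of dependence in a possibly infinite finitary matroid. You obtain it locally: (I3) compresses the relevant dependencies into finitely many elements, and you then quote transitivity of closure in the finite restriction. The cited treatment instead proves transitivity of dependence once and for all. This is the statement the paper later imports as Proposition~\ref{dependence} and reuses in Proposition~\ref{closure properties} and Theorem~\ref{MMP}.

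Your route buys self-containment and shows exactly where finite character and choice enter, including the count $|\mathcal{A}|\leq\aleph_0\cdot|\mathcal{B}|=|\mathcal{B}|$. The citation buys brevity and a reusable lemma.

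Since you use the finite reduction twice, it would be cleaner to isolate it as a lemma. The second use is silent: in Case 2, ``by transitivity'' really means the following. Take $a\in\mathcal{A}\setminus\bigcup_b C_b$; this forces $a\notin\mathcal{B}$. Pick a finite $F\subseteq\mathcal{B}$ with $F\cup\{a\}$ dependent. Work in the finite restriction to $\bigcup_{b\in F}C_b\cup F\cup\{a\}$, and conclude that $\bigcup_{b\in F}C_b\cup\{a\}\subseteq\mathcal{A}$ is dependent.

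Two trivial points. In Case 1 you have $|B'|=|\mathcal{A}|+1$, so a single application of (I2) suffices. In the Zorn step for (iii), the upper bound for the empty chain is $D$, not the (empty) union.
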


\begin{proof}
Parts (i) through (iii) are proved in Chapter 7 of \cite{Mirsky}, and (iv) is an immediate consequence of (i) and (iii).
\end{proof}

If \( (E, \mathcal{I}) \) is a finitary matroid, we define the \emph{rank} \( \rho(A) \) of any \( A \subseteq E \) to be the cardinal number of any basis of \(A\).  We say that \( A \) has \emph{finite rank} if the cardinal \( \rho(A) \) is a nonnegative integer.  Sometimes we use the shorthand \( \rho(A) < \infty \) to express that \( A \) has finite rank.

Although our principal results concern sets of finite rank, such sets need not be finite.  Accordingly, we allow the ground set to be infinite rather than restrict our work to the setting of finite matroids.

The proposition below contains a pair of important properties of the rank.  Standard sources in the literature often do not treat the transfinite case, so we include a brief proof here.

\begin{proposition}  \label{rank properties}
Let \( (E, \mathcal{I}) \) be a finitary matroid and let \( A,B \) be subsets of \( E \).  Then:
\begin{itemize}
\item[(i)] \( \rho(A) \leq \rho(B) \) if \( A \subseteq B \). 
\item[(ii)] \( \rho(A) + \rho(B) \,\geq\, \rho(A \cup B) + \rho(A \cap B) \). 
\end{itemize}
\end{proposition}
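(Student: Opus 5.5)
For (i), I take a basis \( \mathcal{A} \) of \( A \). It is an independent subset of \( B \), so by Proposition~\ref{basis properties}(iv) it extends to a basis \( \mathcal{B} \) of \( B \) with \( \mathcal{A} \subseteq \mathcal{B} \). Hence \( \rho(A) = |\mathcal{A}| \leq |\mathcal{B}| = \rho(B) \), and by Proposition~\ref{basis properties}(ii) this inequality does not depend on the bases chosen. This works equally well for infinite cardinals, since it is just an injection of sets.

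For (ii), I start with a basis \( \mathcal{C} \) of \( A \cap B \). I extend it, by Proposition~\ref{basis properties}(iv), to a basis \( \mathcal{A}' \) of \( A \). I then extend \( \mathcal{A}' \) to a basis \( \mathcal{U} \) of \( A \cup B \).

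Next I set \( \mathcal{D} = \mathcal{U} \setminus (\mathcal{A}' \setminus \mathcal{C}) \). I claim \( \mathcal{D} \subseteq B \). Indeed, any element of \( \mathcal{U} \) lying in \( A \) must lie in \( \mathcal{A}' \), since \( \mathcal{A}' \) is a maximal independent subset of \( A \) and \( \mathcal{U} \) is independent. So the elements of \( \mathcal{D} \) are either in \( \mathcal{C} \subseteq B \) or outside \( A \), hence in \( B \). Since \( \mathcal{D} \) is independent by (I1), part (i) applied within \( B \) gives \( |\mathcal{D}| \leq \rho(B) \).

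Now \( \mathcal{U} \) is the disjoint union of \( \mathcal{A}' \setminus \mathcal{C} \) and \( \mathcal{D} \), while \( \mathcal{A}' \) is the disjoint union of \( \mathcal{A}'\setminus\mathcal{C} \) and \( \mathcal{C} \). Cardinal arithmetic then gives
\[
\rho(A\cup B)+\rho(A\cap B) = |\mathcal{A}'\setminus\mathcal{C}| + |\mathcal{D}| + |\mathcal{C}| = |\mathcal{A}'| + |\mathcal{D}| \leq \rho(A)+\rho(B).
\]

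The main obstacle is avoiding subtraction of cardinals in the transfinite case. The argument above only adds cardinals of disjoint sets and uses monotonicity of cardinal addition, so it holds uniformly for finite and infinite ranks. The one step needing care is the claim \( \mathcal{U} \cap A \subseteq \mathcal{A}' \), which follows directly from the maximality of \( \mathcal{A}' \) in \( \mathcal{I}|_A \).
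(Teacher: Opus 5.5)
Your proof is correct and follows the paper's argument. Your set \( \mathcal{D} = \mathcal{U}\setminus(\mathcal{A}'\setminus\mathcal{C}) \) is exactly the paper's \( (\mathcal{C}_{A\cup B}\setminus\mathcal{C}_A)\cup\mathcal{C} \), and the cardinal computation is the same up to regrouping; you also spell out why this set lies in \( B \) (via maximality of \( \mathcal{A}' \) in \( A \)), a step the paper leaves implicit.
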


\begin{proof}
We rely heavily on Proposition~\ref{basis properties}. For (i), if \( A \subseteq B \), we choose a basis \( \mathcal{A} \) of \( A \) and extend it to a basis \( \mathcal{B} \) of \( B \).  Then 
\[ 
\rho(A) = |\mathcal{A}| \leq |\mathcal{B}| = \rho(B). 
\] 
For (ii), we choose a basis \( \mathcal{C} \) of \( A \cap B \), extend it to a basis \( \mathcal{C}_A \) of \( A \), and then extend this to a basis \( \mathcal{C}_{A\cup B} \) of \( A\cup B \).  Using these bases, we compute
\begin{align*}
\rho(A\cup B) + \rho(A\cap B) &= |\mathcal{C}_{A\cup B}| + |\mathcal{C}|\\
&=  |\mathcal{C}_A \cup (\mathcal{C}_{A\cup B}\setminus \mathcal{C}_A) | + |\mathcal{C}|\\
&=  |\mathcal{C}_A| + |\mathcal{C}_{A\cup B}\setminus \mathcal{C}_A| + |\mathcal{C}| \\
&=  |\mathcal{C}_A| + |(\mathcal{C}_{A\cup B}\setminus \mathcal{C}_A) \cup \mathcal{C}| \\
&\leq \rho(A) + \rho(B),
\end{align*}
where the inequality at the end follows from the fact that \( (\mathcal{C}_{A\cup B}\setminus \mathcal{C}_A) \cup \mathcal{C} \) is an independent subset of \( B \) and hence can be extended to a basis of \( B \).
\end{proof}

Part (i) in the above proposition is called the \emph{monotonicity property}, and (ii) is known as the \emph{submodular inequality}. Using the latter, one easily obtains 
\begin{equation}  \label{rank difference}
\rho(B\setminus A) \geq \rho(B) - \rho(A) \quad \mbox{whenever } A \subseteq B \subseteq E \mbox{ and } B \mbox{ has finite rank}.
\end{equation}

An \emph{automorphism} of a finitary matroid \( (E, \mathcal{I}) \) is a bijective map \( f: E \rightarrow E \) such that for every \( A \subseteq E \), we have \( f(A) \in \mathcal{I} \) if and only if \( A \in \mathcal{I} \).

\begin{example} \label{universal ind group}
Let \( G \) be an arbitrary group.  Take \( E = G \) and \( \mathcal{I} = 2^G \), so that \( \mathcal{I} \) is the universal structure on \( E \).  Then for every \( g\in G \), the left-multiplication map \( x \mapsto gx \) from \( G \) to itself is an automorphism of \( (E,\mathcal{I}) \).
\end{example}

Our focus in this paper will be on situations where, as in the above example, the ground set of our finitary matroid is a group and the operation of the group is compatible with the independence structure.

\begin{definition} \label{IndGr}
An \emph{independence group} is a finitary matroid \( (G, \mathcal{I})\) where \( G \) is a group and for every \( g \in G \), the map 
\begin{equation*}
\begin{aligned}
G &\longrightarrow G \\
x &\longmapsto gx
\end{aligned}
\end{equation*}
is an automorphism of \( (G, \mathcal{I})\).
We also refer to \( G \) itself as an independence group under these circumstances.
\end{definition}

\begin{example}  \label{linear pi-group}
Let \( K \subseteq L \) be a field extension and let \( (E,\mathcal{I}) \) be the finitary matroid introduced in Example~\ref{linear pi}, here taking \( E = G = L^\times \). Then \( (G,\mathcal{I}) \) is an independence group:  if a subset \( A \) of \( L^{\times} \) is linearly independent over \( K \), then so is the set \( gA = \{ gx : x \in A \} \) (for any \( g \) in \( G \)). 
\end{example}

We remark that if \( G \) is an independence group and at least one of its elements is not a loop, then no element of \( G \) can be a loop; for if \( x,y \in G \) and \( x \) is not a loop, then \( \rho( \{ y \} ) = \rho( xy^{-1} \{ y \}) = \rho (\{ x \}) = 1 \).

\section{Matchings in Independence Groups}\label{MatchedSets}

We begin this section by giving, in two steps, a definition of matching for pairs of sets in an independence group. Our definition is motivated primarily by the notions of matching introduced in \cite{Losonczy 1} and \cite{Eliahou 2}.  We then describe two settings where matchings can be considered.  Other settings will be discussed in Section~\ref{ModSec}. At the end of this section, we briefly discuss a simple necessary condition for a pair of sets to be matched.

\begin{definition}  \label{matchable to def}
Let \( G \) be an independence group and let \( A \) be a finite-rank subset of \( G \).  Let \( \mathcal{A} \) be a basis of \( A \) and let \( \mathcal{B} \) be an independent subset of \( G \). We say that \( \mathcal{A} \) is \emph{matchable to} \( \mathcal{B} \) \emph{relative to} \( A \) if there exists a bijection \( f : \mathcal{A} \longrightarrow \mathcal{B} \) such that \( af(a) \notin A \) for all  \( a \in \mathcal{A} \).
\end{definition}

\begin{definition}  \label{matching def}
Let \( G\) and \(A\) be as in Definition~\ref{matchable to def}. Let \( B \) be a subset of \( G \) with \( \rho(B) = \rho(A) \).  We say that \( A \) is \emph{matched to} \( B \) if for every basis  \( \mathcal{A} \) of \( A \) and every basis \( \mathcal{B} \) of \( B \), we have that \( \mathcal{A} \) is matchable to \( \mathcal{B} \) relative to \( A \). We also say in this instance that the pair \( (A,B) \) is matched.
\end{definition}

Note that if \( A \) and \( B \) are subsets of an independence group and \( \rho(A) = \rho(B) = 0 \), then \(A \) is (vacuously) matched to \( B \).

\begin{example} \label{basic group example}
Let \( G \) be any independence group with universal structure \( \mathcal{I} \) and let \( A, B \) be finite subsets of \( G \) with \( |A| = |B| \).  Then \( A \) has a unique basis, namely itself, and likewise for \( B \).  Definition~\ref{matching def} therefore reduces to the following: \( A \) is matched to \( B \) provided there exists a bijection \( f: A \longrightarrow B\) such that \( af(a) \notin A \) for all \( a \in A \).  This agrees with the definition of matching given in \cite{Losonczy 1}.
\end{example}

Let \( K \subseteq L \) be a field extension, put \( G = L^{\times} \), and let \( \mathcal{I} \) be the collection of all subsets of \( G \) that are linearly independent over \( K \).  Recall from Example~\ref{linear pi-group} that \( G \) is an independence group when paired with \( \mathcal{I} \).  Now let \( A, B \) be $n$-dimensional $K$-subspaces of \( L \) with \( 0 < n < \infty \), and note that each of the sets \( A\setminus \{ 0 \} \) and \( B \setminus \{ 0 \} \) has rank \( n \) in \( G \). One can ask for conditions under which \( A\setminus \{ 0 \} \) is matched to \( B \setminus \{ 0 \} \).  A preliminary answer will be given in Section~\ref{transversal section}, and a more illuminating, structural one will be presented in Section~\ref{e-transform and char sec}.

\begin{remark}  \label{EL matching remark}
Let \( K,L,A, \) and \( B \) be as in the above paragraph.  A different but (as we will eventually show) equivalent conception of matching for this setting was introduced by Eliahou and Lecouvey in \cite{Eliahou 2}\footnote{We note that the authors of \cite{Eliahou 2} developed their theory in the more general setting of an extension of division rings \( K \subseteq L \), where \( K \) is contained in the center of \( L \).}.  More precisely, let \( \mathcal{A} = \{a_1, \ldots ,a_n\} \) and \( \mathcal{B} = \{ b_1, \ldots, b_n\} \) be $K$-bases of \( A \) and \( B \), respectively. In \cite{Eliahou 2}, \( \mathcal{A} \) is said to be matched to \( \mathcal{B} \) if  
\begin{equation} \label{EL condition}
a_i^{-1}A \cap B \;\subseteq\; \mbox{span}_K\big(\mathcal{B}\setminus \{ b_i \}\big) \quad \text{for all }i = 1,2,\ldots,n.
\end{equation}
The subspace \( A \) is then said to be matched to the subspace \( B \) if for every ordered $K$-basis \( \mathcal{A} \) of \( A \), there exists an ordered $K$-basis \( \mathcal{B} \) of \( B \) such that \( \mathcal{A} \) is matched to \( \mathcal{B} \). 

Observe that whenever condition (\ref{EL condition}) is satisfied, one has \(a_i b_i\notin A\) and hence \(a_i b_i\notin \mathcal{A}\) for every \(i\). Hence the assignment \(a_i\mapsto b_i\) yields a matching, in the sense of Example~\ref{basic group example}, from \(\mathcal{A}\) to \(\mathcal{B}\) inside the multiplicative group \(L^\times\).  

Although condition (\ref{EL condition}) may initially seem rather different from the product-avoidance condition in Definition~\ref{matchable to def}, it will be shown in Section~\ref{transversal section} that \( A \) is matched to \( B \) in the sense of \cite{Eliahou 2} if and only if \( A\setminus \{ 0 \} \) is matched to \( B \setminus \{ 0 \} \) in the independence group \( G \).
\end{remark}

We conclude this short section with a basic observation about matchability.

\begin{proposition}  \label{necessary condition}
Let \( G \) be an independence group and suppose \( A,B \) are finite-rank subsets of \( G \) with \( \rho(A) = \rho(B) > 0 \). If \( 1 \in B \), then \( (A,B) \) is not matched.
\end{proposition}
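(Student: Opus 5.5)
The plan is to show directly that some pair of bases cannot be matched. Matchedness in Definition~\ref{matching def} quantifies over \emph{every} basis \(\mathcal{A}\) of \(A\) and \emph{every} basis \(\mathcal{B}\) of \(B\), so it suffices to exhibit one basis of \(A\) and one basis of \(B\) for which no bijection \(f\) satisfies \(af(a)\notin A\) for all \(a\).

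First I would fix an arbitrary basis \(\mathcal{A}\) of \(A\); it exists and has size \(n=\rho(A)>0\) by Proposition~\ref{basis properties}(i) and (ii). Next I need a basis of \(B\) that contains \(1\). Since \(\rho(B)>0\), some element of \(G\) is not a loop. By the remark following Example~\ref{linear pi-group}, no element of \(G\) is then a loop, so \(\{1\}\) is independent. It is also a subset of \(B\), so Proposition~\ref{basis properties}(iv) extends \(\{1\}\) to a basis \(\mathcal{B}\) of \(B\) with \(1\in\mathcal{B}\).

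Now take any bijection \(f:\mathcal{A}\to\mathcal{B}\). Because \(1\in\mathcal{B}\), surjectivity gives some \(a\in\mathcal{A}\) with \(f(a)=1\). Then \(af(a)=a\in\mathcal{A}\subseteq A\), which violates the condition of Definition~\ref{matchable to def}. Hence \(\mathcal{A}\) is not matchable to \(\mathcal{B}\) relative to \(A\), and so \((A,B)\) is not matched.

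There is no real obstacle here. The only point needing care is that \(\{1\}\) is independent, and this is exactly where the hypothesis \(\rho(B)>0\) is used, through the no-loops remark. The case \(\rho=0\) genuinely has to be excluded, since in that case the pair is vacuously matched.
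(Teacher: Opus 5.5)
Your proof is correct and is the paper's argument: use \(\rho(B)>0\) and the no-loops remark to see that \(\{1\}\) is independent, and extend it to a basis \(\mathcal{B}\) of \(B\). Then any bijection \(f\) from a basis of \(A\) onto \(\mathcal{B}\) sends some \(a\) to \(1\), so \(af(a)=a\in A\).
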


\begin{proof}
Note that \( 1 \) is not a loop, since otherwise every element of \( G \) would be a loop and then \( A \) and \( B \) would not have positive rank.  Therefore, if \( 1 \in B \), we can extend \( \{ 1 \} \) to a basis \( \mathcal{B} \) of \( B \).  Then, appealing directly to Definition~\ref{matchable to def}, we find that no basis of \( A \) can be matchable to \( \mathcal{B} \) relative to \( A \).  The conclusion follows. 
\end{proof}

One may ask whether the converse of the above proposition is generally true.  It is not.  For example, in \cite{Losonczy 2} it was shown that in an abelian group \( G \) (with, in the terminology of this paper, a universal independence structure), the converse of the above holds if and only if \( G \) is torsion-free or of prime order.  We will address this issue in a far more comprehensive way in Section~\ref{self}.

\section{Independent Transversals and General Matching Criteria}  \label{transversal section}

Our primary goal in this section is to derive a general test for determining whether or not a pair \( (A,B) \) of sets in an independence group is matched.  Once established, it will be used to reconcile our definition of matching with the one discussed in Remark~\ref{EL matching remark}.  It will also be employed later, in Section~\ref{e-transform and char sec}, to prove a structural characterization of matchability. We begin with a brief discussion of the general theory of independent transversals in a finitary matroid.

Let \( E \) be a set and let \( \mathfrak{A} = (A_i : i \in I ) \) be a family of subsets \( A_i \) of \( E \). A \emph{transversal} of \( \mathfrak{A} \) is a subset \( X \) of \( E \) such that there exists a bijection \( \theta : X \rightarrow I \) with the property that \( x \in A_{\theta(x)} \) for all \( x \in X \). 

Now let \( (E, \mathcal{I}) \) be a finitary matroid and let \( \mathfrak{A} = (A_i : i \in I ) \) be a family of subsets of \( E \). An \emph{independent transversal} of \( \mathfrak{A} \) is a transversal \( X \) of \( \mathfrak{A} \) such that \( X \in \mathcal{I} \).

The result below, known as Rado's theorem, is of central importance in the study of independent transversals. A proof can be found in \cite{Mirsky}.  Note that the sets \( A_i \) in the statement are not required to be finite.  We remark that the theorem holds in the more general setting of a pre-independence space (where the hereditary and exchange axioms hold, but not necessarily the finite-character axiom).

\begin{theorem}[Rado] \label{Rado}
Let \( (E,\mathcal{I}) \) be a finitary matroid and let \( \mathfrak{A} = (A_i : i \in I ) \) be a family of subsets of \( E \), with \( I \) finite. Then \( \mathfrak{A} \) possesses an independent transversal if and only if, for every \( J \subseteq I \), 
\[ 
\rho\left(\bigcup_{i \in J}A_i\right) \;\geq\; |J|.
\]
\end{theorem}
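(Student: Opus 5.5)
The statement is Rado's theorem itself. The paper cites \cite{Mirsky} for it, but I would prove it directly by induction on \(|I|\), in the style of the classical Halmos--Vaughan proof of Hall's theorem, using only the rank properties in Proposition~\ref{rank properties}.

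\emph{Necessity.} Suppose \(X\) is an independent transversal with bijection \(\theta\colon X\to I\). For each \(J\subseteq I\), the set \(\theta^{-1}(J)\) is independent, has \(|J|\) elements, and lies in \(\bigcup_{i\in J}A_i\). Monotonicity (Proposition~\ref{rank properties}(i)) then gives \(\rho\bigl(\bigcup_{i\in J}A_i\bigr)\geq |J|\).

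\emph{Sufficiency.} I would argue by induction on \(n=|I|\); the case \(n\le 1\) is immediate. Following Rado's own argument, I would first show that some \(A_i\) can be shrunk while preserving the rank condition. The reduction lemma says: if some \(A_{i_0}\) contains two elements \(x_1\ne x_2\), then deleting one of them from \(A_{i_0}\) still leaves a family satisfying the condition.

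To prove the lemma, suppose neither deletion works. Then there are sets \(J_1,J_2\subseteq I\setminus\{i_0\}\) with
\[
\rho\bigl(P_1\cup (A_{i_0}\setminus\{x_1\})\bigr)\le |J_1|, \qquad \rho\bigl(P_2\cup (A_{i_0}\setminus\{x_2\})\bigr)\le |J_2|,
\]
where \(P_k=\bigcup_{i\in J_k}A_i\). Apply the submodular inequality (Proposition~\ref{rank properties}(ii)) to these two sets. Their union contains \(P_1\cup P_2\cup A_{i_0}\), which has rank at least \(|J_1\cup J_2|+1\). Their intersection contains \(\bigcup_{i\in J_1\cap J_2}A_i\), which has rank at least \(|J_1\cap J_2|\). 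Combining with monotonicity gives
\[
|J_1|+|J_2|\ \ge\ |J_1\cup J_2|+1+|J_1\cap J_2|,
\]
a contradiction. If the ranks are infinite the hypothesis holds trivially, so only finite ranks matter here.

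\emph{Reduction to singletons.} Repeatedly applying the lemma shrinks each \(A_i\). When the \(A_i\) are finite, this ends with every \(A_i\) a singleton \(\{x_i\}\) (none empty, by the condition with \(J=\{i\}\)). The condition with \(J=I\) then says \(\{x_i\}\) is independent of size \(n\), so the \(x_i\) are distinct and form an independent transversal.

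\emph{Infinite \(A_i\): the main obstacle.} The shrinking process need not terminate when the \(A_i\) are infinite. I would handle this by first replacing each \(A_i\) with a finite subset while preserving the condition. For each of the finitely many \(J\subseteq I\), choose a finite independent subset of \(\bigcup_{i\in J}A_i\) of size \(\min\{|J|,\rho(\cdot)\}\geq |J|\). Each such choice uses finitely many elements, each lying in some \(A_i\). Let \(A_i'\) be the set of all chosen elements that were taken from \(A_i\). Then every \(A_i'\) is finite, and \(\bigcup_{i\in J}A_i'\) contains the chosen independent set for \(J\), so the rank condition holds for the family \((A_i')\). The finite case now gives an independent transversal of \((A_i')\), which is also one of \((A_i)\).
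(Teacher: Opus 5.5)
Your proof is correct. The paper does not prove this statement itself; it only cites Mirsky's book and remarks that the theorem holds even in pre-independence spaces. So your argument differs from the paper only in being self-contained.

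\begin{itemize}
\item \textbf{Necessity} is exactly right.
\item \textbf{Sufficiency.} What you actually carry out is Rado's deletion argument, not a Halmos--Vaughan-style induction on \(|I|\). The induction on \(n=|I|\) that you announce is never used.
\item \textbf{The deletion lemma} is sound. A set \(J\) that becomes violating after deleting \(x_k\) must contain \(i_0\). The two offending sets have finite rank, at most \(|J_1|\) and \(|J_2|\) respectively. Their union is exactly \(P_1\cup P_2\cup A_{i_0}\), because \(x_1\neq x_2\). Submodularity from Proposition~\ref{rank properties} then gives the contradiction.
\item \textbf{The finite reduction} is also correct, and it is exactly where the finiteness of \(I\) enters.
\end{itemize}

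Your route gives a complete proof from the rank calculus already established in Section~\ref{Prelims}. The citation gives brevity and the extra generality the paper mentions.

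Two small repairs to the write-up:
\begin{itemize}
\item \emph{Order of the steps.} Put the finite reduction before the deletion step. The real induction is on \(\sum_{i\in I}|A_i'|\). This sum drops by one with each application of the lemma, which is what guarantees termination at singletons.
\item \emph{Choosing \(A_i'\).} The phrase ``the chosen elements taken from \(A_i\)'' tacitly requires fixing, for each chosen element attached to \(J\), an index \(i\in J\) whose set contains it, since an element may lie in several \(A_i\). It is cleaner to let \(F\) be the finite union of all the chosen independent sets and put \(A_i'=A_i\cap F\). Then \(\bigcup_{i\in J}A_i'=\bigl(\bigcup_{i\in J}A_i\bigr)\cap F\) visibly contains the independent set chosen for \(J\).
\end{itemize}

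With the reduction done first, only ranks of finite sets are ever needed in the rest of the argument.
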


We will use Rado's theorem here to obtain some useful general criteria for the matchability of independent sets and of pairs \( (A,B) \) in an independence group.

\begin{theorem}  \label{specialized}
Let \( G \) be an independence group and let \( n \) be a nonnegative integer.  Suppose that \( A \) is a subset of \( G \) of rank \( n \).   Let \( \mathcal{A} \) be a basis of \( A \) and let \( \mathcal{B} \subseteq G \) be an independent set with \( |\mathcal{B}| = n \). Then \( \mathcal{A} \) is matchable to \( \mathcal{B} \) relative to \( A \) if and only if, for every  \( J \subseteq \mathcal{A} \), 
\[
\rho\left(\bigcap_{a \in J} (a^{-1}A \cap \mathcal{B})\right) \;\leq\; n - |J|,
\]
where the intersection is understood to equal \( \mathcal{B} \) when \( J \) is empty.
\end{theorem}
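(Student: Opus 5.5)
The plan is to turn matchability into the existence of a transversal of a suitable family of subsets of \( \mathcal{B} \), and then apply Rado's theorem (Theorem~\ref{Rado}). Since \( \mathcal{B} \) is independent, Rado's rank condition becomes a cardinality condition. Taking complements inside \( \mathcal{B} \) then gives exactly the stated inequality.

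\emph{Step 1: set up the family.} For each \( a \in \mathcal{A} \), put \( B_a = \mathcal{B} \setminus a^{-1}A \). This is the set of \( b \in \mathcal{B} \) with \( ab \notin A \). Consider the family \( \mathfrak{A} = (B_a : a \in \mathcal{A}) \), whose index set \( \mathcal{A} \) is finite of size \( n \) because \( \mathcal{A} \) is a basis of \( A \) and \( \rho(A) = n \).

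\emph{Step 2: matchings are exactly independent transversals of \( \mathfrak{A} \).} Suppose \( f \) is a matching from \( \mathcal{A} \) to \( \mathcal{B} \) relative to \( A \). Then \( X = \mathcal{B} \) is a transversal via \( \theta = f^{-1} \), and it is independent.

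Conversely, let \( X \) be any transversal of \( \mathfrak{A} \), with bijection \( \theta : X \to \mathcal{A} \). Then \( X \subseteq \mathcal{B} \) and \( |X| = n = |\mathcal{B}| \), so \( X = \mathcal{B} \). Hence \( f = \theta^{-1} \) is a bijection \( \mathcal{A} \to \mathcal{B} \) with \( f(a) \in B_a \), that is, \( af(a) \notin A \).

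So \( \mathcal{A} \) is matchable to \( \mathcal{B} \) relative to \( A \) if and only if \( \mathfrak{A} \) has an independent transversal.

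\emph{Step 3: apply Rado's theorem.} By Theorem~\ref{Rado}, \( \mathfrak{A} \) has an independent transversal if and only if
\[
\rho\Bigl(\bigcup_{a \in J} B_a\Bigr) \geq |J| \quad \text{for every } J \subseteq \mathcal{A}.
\]
Every subset of \( \mathcal{B} \) is independent by (I1), so its rank is its cardinality. By De Morgan inside \( \mathcal{B} \),
\[
\bigcup_{a\in J} B_a = \mathcal{B} \setminus \bigcap_{a \in J}\bigl(a^{-1}A \cap \mathcal{B}\bigr).
\]
Call the intersection on the right \( C_J \). Since \( C_J \subseteq \mathcal{B} \), the Rado condition reads \( n - |C_J| \geq |J| \), i.e. \( \rho(C_J) = |C_J| \leq n - |J| \). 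This is the stated inequality.

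\emph{Step 4: the empty case.} For \( J = \emptyset \), the union is empty, so Rado's condition is \( 0 \geq 0 \). The stated inequality is \( \rho(\mathcal{B}) \leq n \), which also holds trivially under the convention \( C_\emptyset = \mathcal{B} \). So the two conditions agree here as well.

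I do not expect a real obstacle. The one point needing care is Step 2: one must check that every transversal of the family is all of \( \mathcal{B} \), so that Rado's transversal is a genuine bijection onto \( \mathcal{B} \) and not a partial one.
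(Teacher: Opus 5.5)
Your proof is correct and follows the paper's argument. Both use the family $\bigl(\mathcal{B}\setminus a^{-1}A : a\in\mathcal{A}\bigr)$, apply Rado's theorem, and observe that any transversal must be all of $\mathcal{B}$. The differences are minor. The paper proves the forward direction directly: each $a^{-1}A\cap\mathcal{B}$ misses $f(a)$, so the intersection lies in $\mathcal{B}\setminus\{f(a):a\in J\}$. For the converse it uses inequality (\ref{rank difference}). You instead get both directions at once from Rado's equivalence, using that ranks of subsets of the independent set $\mathcal{B}$ are just cardinalities.
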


\begin{proof}
If \( n = 0 \), the assertion holds vacuously, so assume \( n > 0 \). Suppose that \( \mathcal{A} \) is matchable to \( \mathcal{B} \) relative to \( A \), and let \( f: \mathcal{A} \rightarrow \mathcal{B} \) be a bijection such that \( af(a) \notin A \) for all \( a \in \mathcal{A} \).  Suppose \( J \subseteq \mathcal{A} \).  Observe that for each \( a \in \mathcal{A} \),
\[
a^{-1}A \cap \mathcal{B} \;\subseteq\; \mathcal{B}\setminus \{ f(a)\}.
\] 
Hence
\[
\bigcap_{a \in J}(a^{-1}A \cap \mathcal{B}) \;\subseteq\; \bigcap_{a \in J}\big(\mathcal{B}\setminus\{ f(a) \}\big) \;=\; \mathcal{B} \setminus \{f(a) : a\in J \}, 
\]
where we agree that the intersections equal \( \mathcal{B} \) if \( J \) is empty.  Using Proposition~\ref{rank properties}, the independence of \( \mathcal{B} \), and the fact that \( f \) is injective, we obtain 
\[
\rho\left( \bigcap_{a \in J}(a^{-1}A \cap \mathcal{B}) \right) \;\leq\; \rho\big(\mathcal{B} \setminus \{f(a) : a\in J \}\big) \;=\; n - |J|,
\]
as desired. 

Conversely, assume that the inequalities in the statement hold.  Define a family \( \mathfrak{D} = (D_a : a \in \mathcal{A}) \) of subsets of \( \mathcal{B} \) by setting \( D_a = \mathcal{B} \setminus a^{-1}A \). We will use Theorem~\ref{Rado} to show that \( \mathfrak{D} \) possesses an independent transversal.  Suppose \(J \subseteq \mathcal{A} \).  Observe that 
\[ 
\bigcup_{a \in J}D_a \;=\; \bigcup_{a\in J}(\mathcal{B}\setminus a^{-1}A)  \;=\; \mathcal{B} \mathbin{\big\backslash} \bigcap_{a\in J}(a^{-1}A \cap \mathcal{B}).
\]
Taking ranks and using inequality (\ref{rank difference}) in Section~\ref{Prelims}, we get
\begin{equation*}
\begin{aligned}
\rho\left( \bigcup_{a\in J}D_a\right) &\;=\; \rho\left( \mathcal{B} \mathbin{\big\backslash} \bigcap_{a\in J}(a^{-1}A \cap \mathcal{B})\right) \\
&\;\geq\; \rho(\mathcal{B}) - \rho\left( \bigcap_{a\in J}(a^{-1}A\cap \mathcal{B})\right) \\
&\;\geq\; n - (n- |J|) \;=\; |J|,
\end{aligned}
\end{equation*}
where the last inequality follows from our hypothesis.  We now apply Theorem~\ref{Rado}, obtaining an independent transversal \( X \) of \( \mathfrak{D} \) with \( X \subseteq \mathcal{B} \). By the definition of transversal, there exists a bijection \( \theta : X \rightarrow \mathcal{A} \) such that \( x \in D_{\theta(x)} \) for all \( x \in X \).  Clearly \( X = \mathcal{B} \).  Let \( f = \theta^{-1} \).  Then \( f \) is a bijection from \( \mathcal{A} \) to \( \mathcal{B} \) such that \( f(a) \notin a^{-1}A \) for all \( a \in \mathcal{A} \).  Equivalently, we have \( af(a) \notin A \) for all \( a \in \mathcal{A} \), so that \( \mathcal{A} \) is matchable to \( \mathcal{B} \) relative to \( A \).
\end{proof}

The next theorem provides a general tool for deciding whether a pair \( (A,B) \) is matched. It can be viewed as a generalization of the commutative case of Proposition 3.1 in \cite{Eliahou 2}.

\begin{theorem}  \label{existence condition}
Let \( G \) be an independence group and let \( n \) be a nonnegative integer.  Suppose \( A,B \) are subsets of \( G \), both of rank \( n \). Let \( \mathcal{A} \) be a basis of \( A\).  Then \( \mathcal{A} \) is matchable, relative to \( A \), to every basis of \( B \) if and only if, for every \( J \subseteq \mathcal{A} \), 
\[
\rho\left(\bigcap_{a \in J} (a^{-1}A \cap B)\right) \;\leq\; n - |J|,
\]
where the intersection is understood to equal \( B \) when \( J \) is empty.
\end{theorem}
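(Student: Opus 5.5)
The plan is to reduce to Theorem~\ref{specialized}, applied to each basis \( \mathcal{B} \) of \( B \) in turn. The only new ingredient is a comparison between the rank of \( C_J := \bigcap_{a\in J}(a^{-1}A\cap B) \) and the rank of its intersection with a basis. Throughout, \( C_\emptyset = B \) by convention. If \( n=0 \), both sides of the equivalence hold trivially, so assume \( n>0 \).

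For sufficiency, assume the inequalities \( \rho(C_J)\le n-|J| \) hold for every \( J\subseteq\mathcal{A} \), and let \( \mathcal{B} \) be any basis of \( B \); then \( |\mathcal{B}|=n \). For each \( J \) we have
\[
\bigcap_{a\in J}(a^{-1}A\cap\mathcal{B}) = C_J\cap\mathcal{B}\subseteq C_J .
\]
Monotonicity (Proposition~\ref{rank properties}(i)) therefore gives \( \rho\bigl(\bigcap_{a\in J}(a^{-1}A\cap\mathcal{B})\bigr)\le n-|J| \). This also covers \( J=\emptyset \), where the left side is \( \rho(\mathcal{B})=n \). Theorem~\ref{specialized} then shows that \( \mathcal{A} \) is matchable to \( \mathcal{B} \) relative to \( A \).

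For necessity, assume \( \mathcal{A} \) is matchable to every basis of \( B \), and fix \( J\subseteq\mathcal{A} \). The idea is to choose a basis of \( B \) that sees all of the rank of \( C_J \). Choose a basis \( \mathcal{C} \) of \( C_J \); it is independent and lies in \( B \). By Proposition~\ref{basis properties}(iv), extend \( \mathcal{C} \) to a basis \( \mathcal{B} \) of \( B \). Then \( \mathcal{C}\subseteq C_J\cap\mathcal{B} \), so
\[
\rho\Bigl(\bigcap_{a\in J}(a^{-1}A\cap\mathcal{B})\Bigr)=\rho(C_J\cap\mathcal{B})\ge|\mathcal{C}|=\rho(C_J).
\]
Since \( \mathcal{A} \) is matchable to this \( \mathcal{B} \), the forward direction of Theorem~\ref{specialized} bounds the left side by \( n-|J| \). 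Hence \( \rho(C_J)\le n-|J| \), as required.

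The main point needing care is this necessity step: the basis \( \mathcal{B} \) must be chosen depending on \( J \), which is legitimate because the hypothesis quantifies over all bases of \( B \). Everything else is routine. The ranks involved are finite because all the sets lie in \( B \), which has rank \( n \).
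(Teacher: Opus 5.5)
Your proof is correct and follows essentially the same route as the paper. Sufficiency uses monotonicity and Theorem~\ref{specialized}. Necessity extends a basis of $C_J$ to a basis of $B$ and applies the forward direction of Theorem~\ref{specialized}. The only differences are cosmetic: you argue necessity directly rather than by contrapositive, and you invoke Proposition~\ref{basis properties}(iv) instead of (iii).
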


\begin{proof}
The statement is true when \( n = 0 \), so suppose \( n > 0 \). Assume that the inequalities in the statement hold, and let \( \mathcal{B} \) be a basis of \( B \).  Observe that
\[
\bigcap_{a \in J} (a^{-1}A \cap \mathcal{B}) \;\subseteq\; \bigcap_{a \in J} (a^{-1}A \cap B) \quad \text{for all }J \subseteq \mathcal{A},
\]
where the intersections equal \( \mathcal{B} \) and \( B \), respectively, if \( J = \emptyset \). Hence, by our hypothesis and Proposition~\ref{rank properties}, 
\[
\rho\left(\bigcap_{a \in J} (a^{-1}A \cap \mathcal{B})\right) \;\leq \; n - |J| \quad \text{for all }J \subseteq \mathcal{A}.
\]
Applying Theorem~\ref{specialized}, we conclude that \( \mathcal{A}\) is matchable to \( \mathcal{B} \) relative to \( A \).

For the converse, assume that there exists a subset \( K \) of  \( \mathcal{A} \) such that
\[
\rho\left(\bigcap_{a \in K} (a^{-1}A \cap B)\right) \;>\; n - |K|.
\]
Let \( \mathcal{C} \) be a basis of \( \bigcap_{a \in K} (a^{-1}A \cap B) = \big( \bigcap_{a \in K} a^{-1}A \big) \cap B \) and choose a basis \( \mathcal{D} \) of \( B \). By Proposition~\ref{basis properties}, there exists a basis \( \mathcal{D}^* \) of \( B \) such that \( \mathcal{C} \subseteq \mathcal{D}^* \subseteq \mathcal{C} \cup \mathcal{D} \).  Notice that \( \mathcal{C} \) is a subset of \( \big( \bigcap_{a \in K} a^{-1}A \big) \cap \mathcal{D}^* = \bigcap_{a \in K} (a^{-1}A \cap \mathcal{D}^*) \), and so
\[
\rho\left(\bigcap_{a \in K} (a^{-1}A \cap \mathcal{D}^*)\right) \,\geq\, \rho\left(\bigcap_{a \in K} (a^{-1}A \cap B)\right) \,>\, n - |K|.
\]
Therefore, by Theorem~\ref{specialized}, \( \mathcal{A} \) is not matchable to \( \mathcal{D}^* \) relative to \( A \).  We conclude that \( A \) is not matched to \( B \).
\end{proof}

In Remark~\ref{EL matching remark} we described a notion of matching, originating in \cite{Eliahou 2}, for pairs of finite-dimensional subspaces in a field extension. We now show that the definition of matching in this paper, when restricted to that setting (but with the zero element removed), is equivalent to the one in \cite{Eliahou 2}.

\begin{corollary} \label{reconciliation corollary}
Let \( K \subseteq L \) be a field extension and let \( n \) be a nonnegative integer. Let \( A, B \) be $K$-subspaces of \( L \) with \( \dim_K A = \dim_K B = n \).  Let \( L^\times \) be the independence group defined in Example~\ref{linear pi-group}.  Put \( \widetilde{A}= A\setminus\{ 0 \} \) and \( \widetilde{B} = B \setminus \{ 0 \} \).  Then \( \widetilde{A} \) is matched to \( \widetilde{B} \) in \( L^\times \) if and only if \( A \) is matched to \( B \) in the sense of Remark~\ref{EL matching remark}.
\end{corollary}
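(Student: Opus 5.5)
The plan is to reduce both notions of matching to basis-level statements and compare them one basis pair at a time. The case \( n = 0 \) is trivial, so take \( n > 0 \). In \( L^\times \), \( \widetilde{A} \) has rank \( n \), its bases are exactly the \( K \)-bases of \( A \), and likewise for \( \widetilde{B} \). Since \( \widetilde{A} \) is matched to \( \widetilde{B} \) precisely when every basis \( \mathcal{A} \) of \( \widetilde{A} \) is matchable, relative to \( \widetilde{A} \), to every basis of \( \widetilde{B} \), it will suffice to fix a \( K \)-basis \( \mathcal{A} = \{a_1,\ldots,a_n\} \) of \( A \) and prove the equivalence of:
\begin{itemize}
\item[(a)] \( \mathcal{A} \) is matchable relative to \( \widetilde{A} \) to every basis of \( \widetilde{B} \);
\item[(b)] some ordering of some \( K \)-basis \( \mathcal{B} \) of \( B \) satisfies condition (\ref{EL condition}).
\end{itemize}
The order of \( \mathcal{A} \) is irrelevant to (a) and can be carried along in (b).

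For (a) \( \Rightarrow \) (b), I will apply Theorem~\ref{existence condition} to get
\[
\rho\Bigl(\bigcap_{a\in J}(a^{-1}\widetilde{A}\cap \widetilde{B})\Bigr)\le n-|J| \quad \text{for all } J\subseteq\mathcal{A}.
\]
Each set \( V_i = a_i^{-1}A\cap B \) is a \( K \)-subspace of \( B \), and \( a_i^{-1}\widetilde{A}\cap\widetilde{B} = V_i\setminus\{0\} \). The rank of an intersection of punctured subspaces is the dimension of the intersection of the subspaces, so the inequality reads \( \dim_K \bigcap_{i\in J}V_i \le n-|J| \). Passing to the dual space \( B^* \), let \( W_i = V_i^{\perp} \). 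Then
\[
\dim \sum_{i\in J}W_i = n-\dim\bigcap_{i\in J}V_i \ge |J|.
\]
By Rado's theorem (Theorem~\ref{Rado}), applied in the linear matroid on \( B^*\setminus\{0\} \) to the family \( (W_i\setminus\{0\})_i \), there is an independent transversal \( \varphi_1,\ldots,\varphi_n \) with \( \varphi_i\in W_i \), and it is a basis of \( B^* \). Let \( b_1,\ldots,b_n \) be the dual basis of \( B \). Since \( \varphi_i \) vanishes on \( V_i \), we get \( V_i\subseteq\ker\varphi_i=\operatorname{span}_K\{b_j:j\ne i\} \), which is (\ref{EL condition}).

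For (b) \( \Rightarrow \) (a), I will reverse the linear algebra. Given \( \mathcal{B} \) satisfying (\ref{EL condition}), let \( \varphi_i \) be the dual basis; then \( \varphi_i\in V_i^{\perp} \). For any \( J \), the \( \varphi_i \) with \( i\in J \) are independent elements of \( \sum_{i\in J}V_i^\perp=(\bigcap_{i\in J}V_i)^\perp \). Hence \( \dim\bigcap_{i\in J}V_i\le n-|J| \), and Theorem~\ref{existence condition} then yields (a).

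I expect the main obstacle to be the direction (a) \( \Rightarrow \) (b). The point is that Theorem~\ref{existence condition} only gives matchability of \( \mathcal{A} \) to bases of \( B \) under the weak product-avoidance condition, whereas (\ref{EL condition}) requires a single basis in which each whole subspace \( a_i^{-1}A\cap B \) lies in a coordinate hyperplane. Bridging this is exactly why I pass to the dual space and apply Rado a second time there rather than in \( L^\times \). Beyond that, the only care needed is the routine identification of ranks of punctured subspaces with dimensions, together with the convention that the intersection over the empty \( J \) is \( B \).
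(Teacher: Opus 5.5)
Your proof is correct, but it takes a different route from the paper's. The paper does no linear algebra itself. It cites Proposition 3.1 of Eliahou--Lecouvey, which already gives the criterion: \(A\) is matched to \(B\) in their sense if and only if \(\dim_K \bigcap_{a\in J}(a^{-1}A\cap B)\le n-|J|\) for every \(K\)-basis \(\mathcal{A}\) of \(A\) and every \(J\subseteq\mathcal{A}\). It then identifies these dimensions with the ranks \(\rho\bigl(\bigcap_{a\in J}(a^{-1}\widetilde{A}\cap\widetilde{B})\bigr)\) in \(L^\times\) and invokes Theorem~\ref{existence condition}. You share those last two steps but replace the citation with a proof of the dimension criterion, done basis by basis. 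The identity \(\sum_{i\in J}V_i^{\perp}=\bigl(\bigcap_{i\in J}V_i\bigr)^{\perp}\) in the finite-dimensional dual \(B^*\) turns the dimension bound into exactly Rado's condition for the family \((V_i^{\perp}\setminus\{0\})_i\). The dual basis of the resulting independent transversal is then precisely a basis satisfying (\ref{EL condition}). Your converse, via the dual basis of a given EL-matched basis, is also sound, and you correctly route it through Theorem~\ref{existence condition} rather than stopping at \(a_ib_i\notin A\), which would only give matchability to one basis. The paper's version is shorter but depends on an external result, proved there for division rings with central \(K\). Yours is self-contained within this paper's toolkit, using Rado's theorem a second time in a different matroid, and gives the slightly finer per-basis equivalence. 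It also makes explicit why the two basis-level notions look different (a whole subspace lying in a coordinate hyperplane versus elementwise product avoidance) yet coincide once one quantifies over all bases of \(B\).
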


\begin{proof}
By Proposition 3.1 in \cite{Eliahou 2}, \( A \) is matched to \( B \) in the sense of that paper if and only if, for every $K$-basis \( \mathcal{A} \) of \( A \) and every subset \( J \) of \( \mathcal{A} \), 
\begin{equation*}
\dim_K \left( \bigcap_{a \in J} (a^{-1}A \cap B) \right) \;\leq\;  n - |J|.
\end{equation*}
Fix a $K$-basis \( \mathcal{A} \) of \( A \), and observe that 
\[
a^{-1}\widetilde{A}\cap \widetilde{B} = (a^{-1}A\cap B)\setminus \{0 \} \quad \text{for all } a \in \mathcal{A}.
\]
Hence for each \( J \subseteq \mathcal{A} \),
\[
\bigcap_{a \in J} (a^{-1}\widetilde{A} \cap \widetilde{B}) \;=\; \left( \bigcap_{a \in J} (a^{-1}A \cap B) \right)\mathbin{\Big\backslash} \{ 0 \},
\]
where the intersections are understood to equal \( \widetilde{B} \) and \( B \), respectively, when \( J = \emptyset \).  It follows that for each \( J \subseteq \mathcal{A} \), 
\begin{equation*}
\rho \left( \bigcap_{a \in J} (a^{-1}\widetilde{A} \cap \widetilde{B}) \right) \;=\; \dim_K \left( \bigcap_{a \in J} (a^{-1}A \cap B) \right),
\end{equation*}
where on the left-hand side we are taking the rank in \( L^\times \). Also, note that a $K$-basis of \( A \) is the same thing as a basis of \( \widetilde{A} \) in \( L^\times \). Therefore, by Theorem~\ref{existence condition}, \( \widetilde{A} \) is matched to \( \widetilde{B} \) in \( L^\times \) if and only if \( A \) is matched to \( B \) in the sense of Remark~\ref{EL matching remark}.
\end{proof}

\section{Modularity and Examples}\label{ModSec}

In order to develop a structural theory for matchings in an independence group, we will find it helpful to work with flats and adopt the assumption of modularity (definitions given below).  This will enable us to generalize and make use of a tool from additive number theory. We begin by formalizing what it means for an element to depend on a subset in a finitary matroid.

Let \( (E,\mathcal{I}) \) be a finitary matroid, and let  \( x \in  E \) and \( A \subseteq E \).  Following \cite{Mirsky}, we write \( x \mid A \), and we say that \( x \) \emph{depends on} \( A \), if either \( x \in A \) or else \( x \notin A \) and \( \{ x \} \cup B \notin \mathcal{I} \) for some independent subset \( B \) of \( A \).  For \( A,B \subseteq E \), we write \( A\mid B \) provided \( x\mid B \) for all \( x \in A \).    

A proof of the following proposition can be found in \cite{Mirsky}, where finitary matroids are called independence spaces.

\begin{proposition}  \label{dependence} 
Let \( (E,\mathcal{I}) \) be a finitary matroid.  Let \( x \in E \) and \( A,B \subseteq E \). If \( x \mid A \) and \( A \mid B \), then \( x \mid B \).
\end{proposition}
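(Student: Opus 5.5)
We prove transitivity of dependence by first reducing it to a statement about rank. The key auxiliary fact is the following: for \( x \in E \) and \( A \subseteq E \), we have \( x \mid A \) if and only if \( x \) lies in \( A \), or \( x \notin A \) and there is a finite independent set \( D \subseteq A \) with \( D \cup \{x\} \notin \mathcal{I} \). The nontrivial direction uses (I3): if \( B \subseteq A \) is independent and \( \{x\} \cup B \) is dependent, some finite subset of \( \{x\}\cup B \) is dependent. Since \( B \) is independent, that finite subset must contain \( x \), so it has the form \( \{x\} \cup D \) with \( D \subseteq B \) finite. From this we derive the working criterion: if \( D \subseteq A \) is finite and independent with \( D \cup \{x\} \) dependent, then for any independent \( D' \supseteq D \) we still have \( D' \cup \{x\} \) dependent, by (I1).

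Now assume \( x \mid A \) and \( A \mid B \), and suppose \( x \notin B \). If \( x \in A \), then \( x \mid B \) is given directly. Otherwise, choose a finite independent \( D \subseteq A \) with \( D \cup \{x\} \) dependent. We may also assume \( x \notin D \), since \( x \notin A \). Every element \( d \in D \) satisfies \( d \mid B \). So for each \( d \in D \setminus B \) we pick a finite independent \( C_d \subseteq B \) with \( C_d \cup \{d\} \) dependent. Let \( F \) be the finite set \( (D\cap B) \cup \bigcup_{d} C_d \subseteq B \), and let \( \mathcal{F} \) be a basis of \( F \) (Proposition~\ref{basis properties}), so \( \mathcal{F} \) is a finite independent subset of \( B \).

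The core step is a rank count inside the finite set \( F \cup D \cup \{x\} \). First, each \( d \in D \) satisfies \( \rho(F \cup \{d\}) = \rho(F) \). For \( d \in D\cap B \) this is immediate. For \( d \notin B \), note that \( C_d \cup \{d\} \) is dependent, so by extending a basis of \( C_d \) inside \( F \) we see that \( d \) cannot be added to any basis of \( F \) containing a basis of \( C_d \). Hence adding \( d \) does not raise the rank. Iterating with the submodular inequality of Proposition~\ref{rank properties}, we get \( \rho(F \cup D) = \rho(F) \). Similarly, \( \rho(D \cup \{x\}) = \rho(D) \) implies \( \rho(F\cup D\cup\{x\}) = \rho(F\cup D) = \rho(F) = |\mathcal{F}| \).

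It follows that \( \mathcal{F} \cup \{x\} \) cannot be independent. If it were, it would be an independent subset of \( F \cup D \cup \{x\} \) of size \( |\mathcal{F}|+1 \), contradicting the rank count. So \( \mathcal{F} \) is an independent subset of \( B \) and \( x \notin B \) with \( \{x\} \cup \mathcal{F} \notin \mathcal{I} \), which is exactly \( x \mid B \).

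The main obstacle is the step ``adding a dependent element does not raise rank'', which requires the submodular inequality in the form \( \rho(X \cup \{y\} \cup \{z\}) \le \rho(X\cup\{y\}) + \rho(X\cup\{z\}) - \rho(X) \), applied one element at a time. All sets involved are finite, so the transfinite subtleties disappear. Equivalently, one can phrase the whole argument via the closure operator \( \operatorname{cl}(A)=\{x : x\mid A\} \) and prove \( \operatorname{cl}(\operatorname{cl}(A))=\operatorname{cl}(A) \) for finite sets first, then pass to general sets through (I3).
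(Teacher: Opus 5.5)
Your proof is correct. The paper gives no proof of this proposition and simply refers to Mirsky, so your argument fills in what the paper omits rather than competing with a written proof. Your route is the standard one.
\begin{itemize}
\item[(1)] Use (I3) to replace the witnesses for $x\mid A$, and for each $d\mid B$, by finite independent sets.
\item[(2)] Collect everything into the finite set $F\cup D\cup\{x\}$ with $F\subseteq B$.
\item[(3)] Show by a rank count that a basis $\mathcal{F}$ of $F$ already makes $\{x\}\cup\mathcal{F}$ dependent.
\end{itemize}

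The details check out. For $d\notin B$, extending $C_d$ to a basis $\mathcal{F}'$ of $F$ makes $\mathcal{F}'\cup\{d\}$ dependent, so $\mathcal{F}'$ is a basis of $F\cup\{d\}$. Since $x\notin B\supseteq F$, an independent $\mathcal{F}\cup\{x\}$ would have size $|\mathcal{F}|+1>\rho(F\cup D\cup\{x\})=|\mathcal{F}|$.

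There is one small imprecision in the iteration giving $\rho(F\cup D)=\rho(F)$. It should apply Proposition~\ref{rank properties}(ii) to $X_i=F\cup\{d_1,\dots,d_i\}$ and $F\cup\{d_{i+1}\}$, whose intersection contains $F$. This gives $\rho(X_{i+1})\le\rho(X_i)+\rho(F\cup\{d_{i+1}\})-\rho(F)=\rho(X_i)$. The form $\rho(X\cup\{y\}\cup\{z\})\le\rho(X\cup\{y\})+\rho(X\cup\{z\})-\rho(X)$ in your last paragraph is not literally the instance needed, though the general submodular inequality covers it.

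What your approach buys is a self-contained, non-circular argument within the paper. It uses only basis and rank facts applied to finite sets, where they follow from (I1)--(I2) alone, even if a transfinite treatment of bases might itself rely on transitivity of dependence. It also never invokes idempotence of closure.

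That last point matters for your closing remark. In the paper, Proposition~\ref{closure properties}(iii) is derived from this very proposition. So the ``equivalent'' closure phrasing would need its own independent proof of idempotence, and that proof is essentially the argument you already gave.
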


The \emph{closure operator} for a finitary matroid \( (E,\mathcal{I}) \) is the map
\[ \mbox{cl}: 2^E \longrightarrow 2^E 
\] 
defined by 
\[
\mbox{cl}(A) = \{ x \in E : x \textup{ depends on } A \} \quad \text{for all }A \subseteq E. 
\]

The next proposition collects together some elementary properties of the closure operator. In the literature, proofs of these often rely on definitions and assumptions that differ somewhat from those used in this paper. For this reason, and for the sake of convenience, we provide a brief proof.

\begin{proposition} \label{closure properties}
Let \( (E, \mathcal{I}) \) be a finitary matroid.  Then the closure operator \( \textup{cl}: 2^E \rightarrow 2^E \) has the following properties:
\begin{itemize}
\item[(i)]  \( A \subseteq \textup{cl}(A) \) for all \( A \subseteq E \).
\item[(ii)]  \( \textup{cl}(A) \subseteq \textup{cl}(B) \) whenever \( A \subseteq B \subseteq E \).
\item[(iii)]  \( \textup{cl}\big(\textup{cl}(A)\big) = \textup{cl}(A) \) for all \( A \subseteq E \).
\item[(iv)] \( \rho\big(\textup{cl}(A)\big) = \rho(A) \) for all \( A \subseteq E \). 
\end{itemize}
\end{proposition}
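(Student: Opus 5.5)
The plan is to argue each property directly from the definition of dependence, using Proposition~\ref{dependence} (transitivity of dependence) together with the basis facts in Proposition~\ref{basis properties}.

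For (i), every \( x \in A \) depends on \( A \) by definition. For (ii), take \( x \mid A \) with \( x \notin B \). Then \( x \notin A \), so there is an independent \( D \subseteq A \) with \( \{x\} \cup D \notin \mathcal{I} \). Since \( D \subseteq B \), this same \( D \) witnesses \( x \mid B \). For (iii), one inclusion is (i) applied to \( \textup{cl}(A) \). For the other, let \( x \mid \textup{cl}(A) \). By the definition of closure, \( \textup{cl}(A) \mid A \), and Proposition~\ref{dependence} then gives \( x \mid A \).

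For (iv), monotonicity (Proposition~\ref{rank properties}(i)) together with (i) gives \( \rho(A) \le \rho(\textup{cl}(A)) \). For the reverse inequality, let \( \mathcal{A} \) be a basis of \( A \). The plan is to show that \( \mathcal{A} \) is also a basis of \( \textup{cl}(A) \); this requires showing it is a maximal independent subset there.

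\begin{itemize}
\item First, \( A \mid \mathcal{A} \). If \( y \in A \setminus \mathcal{A} \), then \( \mathcal{A} \cup \{y\} \) is dependent by maximality, so \( \mathcal{A} \) itself witnesses \( y \mid \mathcal{A} \).
\item Applying Proposition~\ref{dependence} to any \( x \in \textup{cl}(A) \) gives \( x \mid \mathcal{A} \).
\item Now suppose \( x \in \textup{cl}(A) \setminus \mathcal{A} \). Then there is an independent \( D \subseteq \mathcal{A} \) with \( D \cup \{x\} \) dependent. Since \( D \cup \{x\} \subseteq \mathcal{A} \cup \{x\} \), the hereditary axiom (I1) shows \( \mathcal{A} \cup \{x\} \) is dependent.
\end{itemize}

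Hence \( \mathcal{A} \) is maximal independent in \( \textup{cl}(A) \), and \( \rho(\textup{cl}(A)) = |\mathcal{A}| = \rho(A) \). This holds even for infinite cardinals, because rank is defined as the cardinality of any basis and Proposition~\ref{basis properties}(ii) makes this well defined.

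The only delicate step is (iv). There one must observe that the dependence witness \( D \) lies inside \( \mathcal{A} \), which lets us conclude via the hereditary axiom; no exchange argument is needed.
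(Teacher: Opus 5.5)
Your proposal is correct and follows essentially the same route as the paper. Parts (i)--(iii) are identical. In (iv) you use the same key facts, namely $A \mid \mathcal{A}$ together with transitivity of dependence, to show that a basis $\mathcal{A}$ of $A$ remains a basis of $\textup{cl}(A)$. The only difference is that you check maximality directly, one element at a time, whereas the paper extends $\mathcal{A}$ to a basis $\mathcal{A}^*$ of $\textup{cl}(A)$ and derives a contradiction.
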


\begin{proof}
Property (i) follows immediately from the definition of the closure operator.  For (ii), suppose \( A \subseteq B \subseteq E \) and \( x \in \textup{cl}(A) \). If \( x \in B \), we clearly have \( x \in \textup{cl}(B) \), so assume \( x \notin B \).  Then \( x \notin A \). The assumption \( x \in \textup{cl}(A) \) then implies \( \{ x \} \cup C \notin \mathcal{I} \) for some independent subset \( C \) of \(A \).  Since \( C \subseteq B \), it follows that \( x \mid B \).  Thus \( x \in \textup{cl}(B) \).

Turning to (iii), we first note that \( \textup{cl}(A) \subseteq \textup{cl}(\textup{cl}(A)) \) by property (i).  For the reverse inclusion, suppose \( y \in \textup{cl}(\textup{cl}(A)) \).  Then \( y \mid \textup{cl}(A) \). Since \( \textup{cl}(A) \mid A \), it now follows from Proposition~\ref{dependence} that \( y\mid A \), i.e., \( y \in \textup{cl}(A) \).

For (iv), choose a basis \( \mathcal{A} \) of \( A \) and assume for a contradiction that \( \mathcal{A} \) is not a basis of \( \textup{cl}(A) \). By (i), \( \mathcal{A} \) is an independent subset of \( \textup{cl}(A) \), so it can be extended to a basis \( \mathcal{A}^* \) of \( \textup{cl}(A) \).  Choose \( z \in \mathcal{A}^* \setminus \mathcal{A} \).  Then \( z \in \textup{cl}(A) \), so \( z \mid A \).  We also have \( A \mid \mathcal{A} \) (otherwise, there exists \( a \in A \setminus \mathcal{A} \) such that \( \{ a \} \cup \mathcal{A} \in \mathcal{I} \), contradicting the fact that \( \mathcal{A} \) is a maximal independent subset of \( A \)).  Hence \( z \mid \mathcal{A} \) by Proposition~\ref{dependence}. Since \( z \notin \mathcal{A} \), this implies there is an independent subset \( D \) of \( \mathcal{A} \) such that \( \{ z \} \cup D \notin \mathcal{I} \).  But \( \{ z \} \cup D \subseteq \mathcal{A}^* \), giving us a contradiction.
\end{proof}

In a finitary matroid \( (E,\mathcal{I}) \), a \emph{flat} is a subset \( A \) of \( E \) such that \( \textup{cl}(A) = A \). We will need the following well-known pair of facts about flats.  Once again, we provide a short proof.

\begin{proposition} \label{flat properties}
Let \( (E, \mathcal{I}) \) be a finitary matroid. Then the following statements hold:
\begin{itemize}
\item[(i)] If \( A,B \) are flats of \( E \), then so is \( A \cap B \).
\item[(ii)]  Suppose \(A \) is a finite-rank subset of \( E \).  Then \( A \) is a flat of \( E \) if and only if \( \rho\big(A \cup \{ x \}\big) = \rho(A) + 1 \) for all \( x \in E \setminus A \).
\end{itemize}
\end{proposition}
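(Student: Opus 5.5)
For (i), I will show \( \textup{cl}(A\cap B) \subseteq A \cap B \); the reverse inclusion is Proposition~\ref{closure properties}(i). Since \( A \cap B \subseteq A \), Proposition~\ref{closure properties}(ii) gives \( \textup{cl}(A\cap B) \subseteq \textup{cl}(A) = A \). In the same way, \( \textup{cl}(A\cap B) \subseteq \textup{cl}(B) = B \). Hence \( \textup{cl}(A \cap B) \subseteq A\cap B \), so \( A \cap B \) is a flat.

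For (ii), I will first record one fact about a basis \( \mathcal{A} \) of \( A \), where \( |\mathcal{A}| = \rho(A) = n < \infty \) and \( x \in E \setminus A \). The fact is that \( x \mid A \) if and only if \( \mathcal{A}\cup\{x\} \notin \mathcal{I} \).

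For the backward direction, suppose \( \mathcal{A}\cup\{x\}\notin\mathcal{I} \). Since \( x \notin A \) and \( \mathcal{A} \) is an independent subset of \( A \), the definition of dependence gives \( x\mid A \) directly.

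For the forward direction, suppose \( x \mid A \). As in the proof of Proposition~\ref{closure properties}(iv), \( A \mid \mathcal{A} \), so Proposition~\ref{dependence} gives \( x \mid \mathcal{A} \). Since \( x\notin\mathcal{A} \), there is an independent \( D\subseteq\mathcal{A} \) with \( D\cup\{x\}\notin\mathcal{I} \). Then (I1) gives \( \mathcal{A}\cup\{x\}\notin\mathcal{I} \).

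Next I will relate this to \( \rho(A\cup\{x\}) \). Extend \( \mathcal{A} \) to a basis \( \mathcal{A}^* \) of \( A\cup\{x\} \) using Proposition~\ref{basis properties}(iv). Then \( \mathcal{A}^* \subseteq \mathcal{A}\cup\{x\} \), because any element of \( A \) that \( \mathcal{A}^* \) adds to \( \mathcal{A} \) would contradict the maximality of \( \mathcal{A} \) in \( A \) (by (I1)). Consequently, \( \rho(A\cup\{x\}) \) equals \( n+1 \) exactly when \( \mathcal{A}\cup\{x\}\in\mathcal{I} \), and equals \( n \) otherwise.

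The two directions of (ii) now follow. Suppose \( A \) is a flat and \( x\notin A \). Then \( x \nmid A \), so \( \mathcal{A}\cup\{x\}\in\mathcal{I} \) and the rank rises to \( n+1 \). Conversely, suppose the rank condition holds and \( x\in \textup{cl}(A)\setminus A \). Then \( \mathcal{A}\cup\{x\}\notin\mathcal{I} \), which forces \( \rho(A\cup\{x\})=n \), a contradiction. So \( \textup{cl}(A)=A \).

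I expect the main obstacle to be this equivalence between dependence on \( A \) and dependence on a fixed basis \( \mathcal{A} \). The definition of \( x\mid A \) only supplies some independent subset \( D \) of \( A \), not necessarily one inside \( \mathcal{A} \). Passing through Proposition~\ref{dependence} with \( A\mid\mathcal{A} \) is what resolves this.
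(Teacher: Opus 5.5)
Your proof is correct. Part (i) is the paper's argument, routed through monotonicity of closure (Proposition~\ref{closure properties}(ii)). The paper instead checks inline that a witness \( X \subseteq A\cap B \) for \( x \mid A\cap B \) is also a witness for \( x \mid A \). The forward direction of (ii) also matches the paper.

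The converse of (ii) takes a genuinely different route. You fix an arbitrary basis \( \mathcal{A} \) of \( A \) and transfer dependence from \( A \) to \( \mathcal{A} \) using the transitivity result (Proposition~\ref{dependence}) together with \( A \mid \mathcal{A} \). The paper never needs this transfer. Given \( z \in \textup{cl}(A)\setminus A \) with witness \( Z \), it extends \( Z \) itself to a basis \( \mathcal{Z} \) of \( A \), and then extends \( \mathcal{Z} \) to a basis \( \mathcal{Z}^* \) of \( \{z\}\cup A \). Since \( \{z\}\cup Z \) is dependent, \( z\notin\mathcal{Z}^* \), and so \( \rho(\{z\}\cup A)=\rho(A) \).

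So the obstacle you flagged is avoided by choosing the basis to contain the witness. The paper's argument uses only basis extension (Proposition~\ref{basis properties}), not transitivity of dependence. What your route buys is a reusable criterion: for \( x\notin A \), \( x\in\textup{cl}(A) \) if and only if \( \mathcal{A}\cup\{x\}\notin\mathcal{I} \) for any basis \( \mathcal{A} \) of \( A \). Together with your dichotomy \( \rho(A\cup\{x\})\in\{n,n+1\} \), this handles both directions of (ii) at once.
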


\begin{proof}
For (i), it suffices to show that \( \textup{cl}(A \cap B) \subseteq A \cap B \), in view of Proposition~\ref{closure properties}. Suppose for a contradiction that there is an element 
\[ 
x \in \textup{cl}(A \cap B)\setminus (A \cap B), 
\] 
and assume without loss of generality that \( x \notin A \).  Since \( x \mid A \cap B \), we have \( \{ x \} \cup X \notin \mathcal{I} \) for some independent subset \( X \) of \( A \cap B \).  Now, \( X \subseteq A \), so we also have \( x \mid A \).  But then, as \( A \) is a flat, it follows that \( x \in A \), which is absurd. 

Turning to (ii), we first assume that \( A \) is a flat.  Suppose \( y \in E \setminus A \). Then \( y \notin \textup{cl}(A) \), so that \( \{y \} \cup Y \in \mathcal{I} \) for all independent subsets \( Y \) of \( A \).  Choose a basis \( \mathcal{A} \) of \( A \). Then \( \{ y \} \cup \mathcal{A} \in \mathcal{I} \), and it is clear that \( \{ y \} \cup \mathcal{A} \) is a maximal independent subset (a basis) of \( \{ y \} \cup A \). Therefore 
\[ 
\rho(\{ y \} \cup A) = |\{ y \} \cup \mathcal{A}| = |\mathcal{A}| + 1 = \rho(A) + 1. 
\]

Finally, assume that \( A \) is not a flat, and let \( z \in \textup{cl}(A) \setminus A \).  Then \( \{ z \} \cup Z \notin \mathcal{I} \) for some independent subset \( Z \) of \( A \).  Extend \( Z \) to a basis \( \mathcal{Z} \) of \( A \), and then extend \( \mathcal{Z} \) to a basis \( \mathcal{Z}^* \) of \( \{ z \} \cup A \). We cannot have \( z\in  \mathcal{Z}^* \), since otherwise the subset \( \{ z \} \cup Z \) of  \( \mathcal{Z}^* \) would be independent, a contradiction.  Thus \( \mathcal{Z}^* \) is contained in \( A \) and so is a basis of \( A \). Hence
\[
\rho(\{ z \} \cup A) = |\mathcal{Z^*}| = \rho(A), 
\]  
completing the proof.
\end{proof}

Next we show that the rank and (non-)flatness of a set in an independence group are invariant under left translation.

\begin{proposition} \label{preservation of flatness and rank}
Let \( G \) be an independence group, let \( A \) be a subset of \( G \), and let \( g \in G \).  Then:
\begin{itemize}
\item[(i)] \(  \rho(gA) = \rho(A) \).
\item[(ii)] If \( A \) is a flat of \( G \), then so is \( gA \).
\end{itemize}
\end{proposition}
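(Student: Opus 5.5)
The plan is to use that left multiplication by \( g \) is an automorphism of \( (G,\mathcal{I}) \) (Definition~\ref{IndGr}), with inverse the left multiplication by \( g^{-1} \). Both parts then follow by transporting bases and dependence along this bijection.

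For (i), we choose a basis \( \mathcal{A} \) of \( A \), which exists by Proposition~\ref{basis properties}. We claim that \( g\mathcal{A} \) is a basis of \( gA \). First, \( g\mathcal{A} \in \mathcal{I} \) and \( g\mathcal{A} \subseteq gA \). For maximality, suppose \( g\mathcal{A} \subsetneq C \subseteq gA \) with \( C \in \mathcal{I} \). Then \( g^{-1}C \) is independent, lies in \( A \), and properly contains \( \mathcal{A} \), which is a contradiction. Since \( x \mapsto gx \) is injective, \( |g\mathcal{A}| = |\mathcal{A}| \), and so \( \rho(gA) = \rho(A) \) as cardinals.

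For (ii), we first show that \( \textup{cl}(gA) = g\,\textup{cl}(A) \), working directly from the definition of dependence. Suppose \( y \mid gA \) and \( y \notin gA \). Then \( \{y\} \cup B \notin \mathcal{I} \) for some independent \( B \subseteq gA \). Applying the automorphism \( x \mapsto g^{-1}x \), we get \( \{g^{-1}y\} \cup g^{-1}B \notin \mathcal{I} \), where \( g^{-1}B \subseteq A \) is independent and \( g^{-1}y \notin A \). Hence \( g^{-1}y \mid A \), that is, \( y \in g\,\textup{cl}(A) \). The case \( y \in gA \) is immediate from Proposition~\ref{closure properties}(i). The reverse inclusion follows by the same argument with \( g \) and \( g^{-1} \) interchanged, applied to the set \( gA \). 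If \( A \) is a flat, this gives \( \textup{cl}(gA) = g\,\textup{cl}(A) = gA \), so \( gA \) is a flat.

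There is no real obstacle here. The only care needed is in (i) for infinite rank, where we must compare cardinals via the bijection rather than use Proposition~\ref{flat properties}(ii). This is why we argue through bases and the definition of closure rather than through the finite-rank criterion.
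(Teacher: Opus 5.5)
Your proof is correct and follows the paper's argument in substance. For (i) you transport a basis along the automorphism \( x \mapsto gx \), and for (ii) you pull back, via \( g^{-1} \), the independent set witnessing \( y \mid gA \); the paper does exactly this, phrased as a contradiction. The one difference is that you prove the full identity \( \textup{cl}(gA) = g\,\textup{cl}(A) \), whereas (ii) only needs the inclusion \( \textup{cl}(gA) \subseteq g\,\textup{cl}(A) \). That identity is essentially Lemma~\ref{product and closure}(i) of the paper.
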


\begin{proof}
Let \( \mathcal{I} \) denote the independence structure on \( G \). 
For (i), we recall that by the definition of independence group, for any \( S \subseteq G \) and any \( h \in G \), we have \( hS \in \mathcal{I} \) if and only if \( S \in \mathcal{I} \), where \( hS = \{ hx : x \in S \} \). It immediately follows that \( \rho(gA) = \rho(A) \).

Assume now, toward a contradiction, that \( A \) is a flat and there is an element \( x \in \textup{cl}(gA)\setminus gA \).  Then \( \{ x \} \cup C \notin \mathcal{I} \) for some independent subset \( C \) of \( gA \).  Put \( D = g^{-1}C \).  Then \( D \) is an independent subset of \( A \) while \( \{g^{-1}x\} \cup D \notin \mathcal{I} \).  Since \( g^{-1}x \notin A \), this means that \( g^{-1}x \mid A \), i.e., \( g^{-1}x \in \textup{cl}(A) = A \), which is absurd.  Thus (ii) is proved. 
\end{proof}

\begin{definition} \label{modular definition}
A finitary matroid \( (E, \mathcal{I}) \) is called \emph{modular} if for all flats \(A,B\) of \( E \), 
\[
\rho(A) + \rho(B) \,=\, \rho(A \cup B) + \rho(A \cap B).
\]
A \emph{modular independence group} is an independence group which is modular as a finitary matroid.
\end{definition}

The rest of this section is devoted to presenting a variety of examples of modular independence groups.  These include groups with the universal independence structure, groups of the form \( L^\times / K^\times \), where \( K \subseteq L \) is a field extension and the independence structure is based on linear independence, and a class of examples constructed using coset partitions.

\begin{proposition} \label{basic modular examples}
The finitary matroids discussed in Examples~\ref{universal structure} and \ref{linear pi-group} are modular.  More specifically, the following statements hold:
\begin{itemize}
\item[(i)] Let \(E\) be a set equipped with the universal independence structure \(\mathcal{I}=2^E\). Then for every \(X\subseteq E\), we have
\[
\textup{cl}(X)=X \qquad\text{and} \qquad \rho(X)=|X|.
\]
Consequently, every subset of \(E\) is a flat, and \((E,\mathcal{I})\) is modular.

\item[(ii)] Let \(K\subseteq L\) be a field extension, let \(E=L^\times\), and let \(\mathcal{I}\) consist of the subsets of \(L^\times\) that are linearly independent over \(K\). Then for every \(X\subseteq L^\times\), we have
\begin{align*}
\textup{cl}(X) = (\textup{span}_K X)\setminus \{0\}
\quad \mbox{and} \quad \rho(X) = \dim_K\big(\textup{span}_K X\big).
\end{align*}
Consequently, the flats are precisely the sets of the form \( U \setminus\{0\} \), where \(U\) is a \(K\)-subspace of \(L\), and \((E,\mathcal{I})\) is modular.
\end{itemize}
\end{proposition}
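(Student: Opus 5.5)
For part (i), I would work directly from the definition of dependence. With \(\mathcal{I}=2^E\), every subset of \(E\) is independent. So for \(x\notin X\) there is no independent \(B\subseteq X\) with \(\{x\}\cup B\notin\mathcal{I}\), which means \(x\nmid X\). Hence \(\textup{cl}(X)=X\) and every subset is a flat. The unique basis of \(X\) is \(X\) itself, so \(\rho(X)=|X|\). Modularity then reduces to inclusion–exclusion for cardinals,
\[
|A|+|B|=|A\cup B|+|A\cap B|,
\]
which holds for arbitrary (possibly infinite) cardinalities. To see this, write \(A\cup B\) as the disjoint union of \(A\) and \(B\setminus A\), and \(B\) as the disjoint union of \(A\cap B\) and \(B\setminus A\). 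Both sides then equal \(|A|+|A\cap B|+|B\setminus A|\).

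For part (ii), I would first identify the closure. Let \(x\in L^\times\setminus X\). I claim \(x\mid X\) if and only if \(x\in\textup{span}_K X\).
\begin{itemize}
\item If \(\{x\}\cup B\) is dependent for some linearly independent \(B\subseteq X\), then since \(x\notin B\), a nontrivial relation must involve \(x\). This puts \(x\) in \(\textup{span}_K B\subseteq\textup{span}_K X\).
\item Conversely, if \(x\in\textup{span}_K X\), then \(x\) lies in the span of some finite subset of \(X\). Shrinking that subset to a linearly independent set \(B\) with the same span, \(\{x\}\cup B\) is dependent.
\end{itemize}
This gives \(\textup{cl}(X)=(\textup{span}_K X)\setminus\{0\}\), noting that \(0\notin L^\times\).

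Next I would compute the rank. A basis of \(X\) in the matroid sense is a maximal linearly independent subset of \(X\), which spans \(\textup{span}_K X\). Its cardinality is therefore \(\dim_K\textup{span}_K X\).

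It then follows that the flats are exactly the sets \(U\setminus\{0\}\) for \(K\)-subspaces \(U\). Each such set satisfies \(\textup{cl}(U\setminus\{0\})=U\setminus\{0\}\), and conversely every flat has this form by the closure formula.

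For modularity, take flats \(A=U\setminus\{0\}\) and \(B=W\setminus\{0\}\). Then \(A\cap B=(U\cap W)\setminus\{0\}\). The set \(A\cup B\) has span \(U+W\), so \(\rho(A\cup B)=\dim(U+W)\). The modularity identity therefore becomes Grassmann's formula
\[
\dim U+\dim W=\dim(U+W)+\dim(U\cap W).
\]
The only point needing care is that the dimensions may be infinite, so the formula must hold for cardinal dimensions. I would handle this by choosing a basis \(C\) of \(U\cap W\), extending it to bases \(C\cup C_1\) of \(U\) and \(C\cup C_2\) of \(W\), and checking that \(C\cup C_1\cup C_2\) is a basis of \(U+W\) with \(C,C_1,C_2\) pairwise disjoint. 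The cardinal identity then follows as in part (i).

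Neither part has a real obstacle. The only mildly delicate steps are the infinite-cardinal bookkeeping and the independence of \(C\cup C_1\cup C_2\). For the latter, suppose a linear relation holds among these vectors. The part of the relation lying in \(C_2\) is then a vector in \(U\cap W\), hence in \(\textup{span}\,C\). Since \(C\cup C_2\) is independent, its \(C_2\)-coefficients are all zero. The remaining relation lives in \(C\cup C_1\), which is independent, so all coefficients vanish.
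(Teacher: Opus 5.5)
Your proposal is correct and follows essentially the same route as the paper: the closure is read off from the definition of dependence, the rank comes from maximal independent subsets spanning \(\textup{span}_K X\), and modularity is reduced to cardinal inclusion--exclusion and Grassmann's formula. The only difference is that you prove these cardinal identities directly, including the infinite-dimensional Grassmann formula via the common basis \(C\cup C_1\cup C_2\), whereas the paper simply cites standard references for them.
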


\begin{proof}
We first consider the universal independence structure. Let \( X\subseteq E \) and suppose  \( x\in E \). By definition, \( x \mid X \) if and only if \( x \in X \) or else \( x \notin X \) and \( \{ x \} \cup C \notin \mathcal{I} \) for some independent subset \( C \) of \( X \).  Since every subset of \(E\) is independent, it follows that \( x \mid X  \,\mbox{ if and only if }\, x \in X \). In other words, \( \textup{cl}(X)=X \).  Moreover, as \(X\) itself is independent, it is the unique maximal independent subset of \(X\), giving us \( \rho(X)=|X| \).

Let \( A, B \subseteq E \). We have, in cardinal arithmetic, 
\[
|A|+|B| = |A\cup B|+|A\cap B| 
\]
(see, e.g., Exercise 8.2(9) in \cite{Pinter}), which translates to
\[
\rho(A)+\rho(B) = \rho(A\cup B)+\rho(A\cap B).
\]
Thus \( (E,\mathcal{I}) \) is modular.

For (ii), let \(X\subseteq L^\times\). We first verify that \( \textup{cl}(X) = (\textup{span}_K X) \setminus \{0\} \). Suppose \(x\in\textup{cl}(X)\). Then either \(x\in X\), in which case \( x \in \textup{span}_K X \), or else \( x \notin X \) and there exists \( C \in \mathcal{I} \) such that \( C \subseteq  X \) and \( \{ x \} \cup C \notin \mathcal{I} \), which yields 
\[ 
x \in \textup{span}_K C  \subseteq \textup{span}_K X. 
\] 
Noting that \( x \neq 0 \) since \( x \in L^\times \), we conclude that \( x \in (\textup{span}_K X)\setminus \{ 0 \} \).

For the reverse inclusion, suppose \( y \in L^{\times}\setminus \textup{cl}(X)\).  Then \( y\notin X \) and \( \{y\} \cup C \in \mathcal{I} \) for every \( C \in \mathcal{I} \) with \( C \subseteq X \).  This is equivalent to the condition that \( y \notin \textup{span}_K C \) for every \( C \subseteq X \) such that \( C \) is linearly independent over \( K \).  Hence \( y \notin \textup{span}_K X \), and we have proved that \( \textup{cl}(X) = (\textup{span}_K X) \setminus \{0\} \).  

From the above, we see that a subset \( F \) of \( L^\times \) is a flat if and only if \( F = (\textup{span}_K F) \setminus\{0\} \). In other words, the flats are precisely the sets of the form \( U\setminus \{0\} \), where \(U\) is a $K$-subspace of \(L\).

The equation  \( \rho(X) = \dim_K\big(\textup{span}_K  X \big) \) holds because a subset \( S \) of \( X \) is a basis in the finitary matroid sense if and only if \( S \) is a $K$-basis of \( \textup{span}_K X \).

Finally, we verify the modularity property. For any flats 
\( A = U\setminus \{0\} \) and \( B = V\setminus \{0\} \) in \( L^\times \), we have
\[
A\cap B
=
(U\cap V)\setminus\{0\} \qquad \text{and} \qquad \textup{span}_K (A\cup B) = U+V.
\]
Therefore
\[
\rho(A\cap B)=\dim_K(U\cap V) \qquad \text{and} \qquad \rho(A\cup B)=\dim_K(U+V).
\]
Irrespective of the cardinals involved, we have (see, e.g., Chapter 1 of \cite{Roman}) 
\[
\dim_K U+\dim_K V = \dim_K(U+V)+\dim_K(U\cap V).
\]
Thus
\[
\rho(A)+\rho(B) = \rho(A\cup B)+\rho(A\cap B),
\]
and we conclude that \( (L^\times, \mathcal{I}) \) is modular.
\end{proof}

Next we describe the projective independence groups arising from field extensions.  Note that we do not restrict our attention to extensions of finite degree until Proposition~\ref{projective submonoids}, where we establish a one-to-one correspondence between the intermediate fields of the extension and the flat submonoids of the independence group. 

A general reference for projective spaces in the matroid context is \cite{Oxley}.

\begin{definition}  \label{proj equiv def}
Let \( K \subseteq L \) be a field extension and let 
\[
\pi \colon L^\times \longrightarrow L^\times/K^\times
\]
be the quotient map. A subset \(A \) of \( L^\times / K^\times \) is called \emph{projectively independent} if for one---and hence every---choice of representatives \(x_a \in a \) (for each \( a \in A\)), the set \( \{x_a : a \in A \} \) is linearly independent over \(K\). For a $K$-subspace \(W \) of  \(L\), we write
\[
\mathbb{P}_K(W)=\pi\bigl(W\setminus\{0\}\bigr).
\]
\end{definition}

\begin{proposition}[Projective independence groups]  \label{projective independence group}
Let \(K\subsetneq L\) be a field extension and let
\[
G=L^\times/K^\times.
\]
Define \( \mathcal{I} \) to be the collection of all projectively independent subsets of \( L^\times / K^\times \).  Then \( \mathcal{I} \) is an independence structure on \( G \).  With this structure, \( G \) is a modular abelian independence group.

Now define, for each \( A \subseteq G \),
\[
V_A \,=\, \operatorname{span}_K \{x\in L^\times:\pi(x)\in A\} \,\subseteq\, L.
\]
Then
\[
\textup{cl}(A)=\mathbb P_K(V_A) \qquad \text{and} \qquad  \rho(A)=\dim_K V_A.
\]
Consequently, the flats of \( G \) are precisely the sets \( \mathbb P_K(W) \),  where \(W\) ranges over the $K$-subspaces of \(L\).
\end{proposition}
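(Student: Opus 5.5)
The plan is to transfer everything from the linear-independence independence group \(L^\times\) of Example~\ref{linear pi-group} through the quotient map \(\pi\), whose fibers are the cosets \(xK^\times\).

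\textbf{Well-definedness and representatives.} For each \(a\in A\), any two representatives differ by a nonzero scalar in \(K\). Rescaling vectors by nonzero scalars preserves linear independence, so the phrase ``one---and hence every---choice'' is justified. This also shows that a projectively independent set lifts to independent representatives, and that distinct elements of \(A\) give distinct lines.

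\textbf{The axioms (I1)--(I3).} (I1) is immediate, since subsets of independent sets of representatives are independent. For (I2), given finite projectively independent \(A,B\) with \(|B|=|A|+1\), choose representatives \(\tilde A,\tilde B\) and apply exchange in \(L^\times\) (Example~\ref{linear pi}). This yields \(\tilde x\in\tilde B\setminus\tilde A\) with \(\tilde A\cup\{\tilde x\}\) independent. Then \(x=\pi(\tilde x)\notin A\), for otherwise \(\tilde x\) would be a scalar multiple of a member of \(\tilde A\), and so \(A\cup\{x\}\) is independent. For (I3), fix a choice of representatives. Each finite subset of the representative set is independent by hypothesis, and linear independence has finite character, so the whole set is independent.

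\textbf{Translation invariance.} \(G\) is abelian as a quotient of an abelian group. If \(g=\pi(y)\), then \(y\tilde A\) is a set of representatives of \(gA\), and multiplication by \(y\) is a \(K\)-linear automorphism of \(L\). Hence \(gA\) is independent if and only if \(A\) is.

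\textbf{Rank and closure.} Let \(\tilde A=\pi^{-1}(A)\). A subset \(C\subseteq A\) is independent exactly when a choice of representatives is \(K\)-linearly independent. Since every element of \(\tilde A\) is a scalar multiple of a chosen representative, the maximal independent subsets of \(A\) correspond to \(K\)-bases of \(V_A\) chosen inside \(\tilde A\). This gives \(\rho(A)=\dim_K V_A\).

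For the closure, take \(x\notin A\) with representative \(\tilde x\). Then \(x\mid A\) if and only if some independent \(C\subseteq A\) has \(C\cup\{x\}\) dependent. This happens if and only if \(\tilde x\in\operatorname{span}_K\) of representatives of some independent \(C\), which holds if and only if \(\tilde x\in V_A\). Indeed, if \(\tilde x\in V_A\), then \(\tilde x\) lies in the span of finitely many independent representatives. Elements of \(A\) lie in \(\mathbb P_K(V_A)\) trivially, so \(\textup{cl}(A)=\mathbb P_K(V_A)\).

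\textbf{Flats.} \(F\) is a flat if and only if \(F=\mathbb P_K(V_F)\). Conversely, for any subspace \(W\), \(\pi^{-1}(\mathbb P_K(W))=W\setminus\{0\}\), so \(V=W\) and \(\mathbb P_K(W)\) is a flat. Thus the flats are exactly the sets \(\mathbb P_K(W)\), and \(W\mapsto\mathbb P_K(W)\) is an inclusion-preserving bijection from subspaces onto flats.

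\textbf{Modularity.} For flats \(\mathbb P_K(U)\) and \(\mathbb P_K(W)\), the intersection is \(\mathbb P_K(U\cap W)\), since \(\pi^{-1}\) of it is \((U\cap W)\setminus\{0\}\). The union has \(V=U+W\). Applying the dimension formula \(\dim U+\dim W=\dim(U+W)+\dim(U\cap W)\), valid in cardinal arithmetic, gives modularity.

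The main obstacle I expect is the care needed in the closure computation and in (I2): keeping track of when a representative is ``new'' versus merely a scalar multiple of an existing one. The fact that \(x\notin A\) means \(\tilde x\notin K^\times\tilde a\) for every \(a\in A\) resolves this.
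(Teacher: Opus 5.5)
Your proposal is correct and follows essentially the paper's route: you transfer everything from the linear matroid on \(L^\times\) via representatives, compute the closure by the same two-inclusion argument, identify flats through \(V_{\mathbb P_K(W)}=W\), and get modularity from the intersection and sum formulas together with the cardinal dimension formula. The differences are minor: for the rank you identify the matroid on \(A\) with the linear matroid on a set of representatives, so a maximal independent subset directly yields a \(K\)-basis of \(V_A\), whereas the paper argues by contradiction using basis extension inside \(V_A\); you also write out (I1)--(I3), which the paper leaves as straightforward.
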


\begin{proof}
Using the fact that the definition of projective independence does not depend on the choice of representatives, it is straightforward to show using Example~\ref{linear pi} that the collection \( \mathcal{I} \) is an independence structure. 

To see that \( \mathcal{I} \) is compatible with the group operation of \( G \), let \( g \in G \) and \( A \subseteq G \).  Choose a representative \( x_a \in a \) for each \(a \in A \), and a representative \( z \in g \).  Then \( \{zx_a : a \in A \} \) is a set of representatives for the elements of \( gA \).  Clearly \( \{zx_a : a \in A \} \) is linearly independent over \( K \) if and only if \( \{ x_a : a \in A \} \) is. From this we get \( gA \in \mathcal{I} \) if and only if \( A \in \mathcal{I} \). Thus \( G \) is an independence group. It is plainly abelian.

Next we verify the assertion about \( \textup{cl}(A) \).  Suppose \( yK^\times \in \textup{cl}(A) \) with \( y \in L^\times \).  If \( yK^\times \in A \), then by the definition of \( V_A \) we have \( y \in V_A \), and so \( yK^\times = \pi(y) \in \pi\big(V_A\setminus \{ 0 \}\big) \).  Now suppose \( yK^\times \notin A \).  Then \( \{ yK^\times \} \cup C \notin \mathcal{I} \) for some independent subset \( C \) of \( A \).  Choose a representative \( y_c \in c \) for each \(c \in C\).  Then \( \{ y_c : c \in C \} \) is linearly independent over \( K \) while \( \{ y \} \cup \{ y_c : c \in C \} \) is linearly dependent. Hence
\[
y \in \big(\text{span}_K\{ y_c : c \in C \}\big) \setminus \{ 0 \} \,\subseteq\, V_A\setminus \{ 0 \},  
\]
so that \( yK^\times = \pi(y) \in \mathbb P_K(V_A) \).

For the reverse inclusion, suppose \( wK^\times \in  \mathbb P_K(V_A) \) with \( w \in V_A\setminus \{ 0 \} \). If \( wK^\times \in A \), there is nothing to do, so assume \( wK^\times \notin A \). Then there exists a subset \( D \) of \( L^\times \) such that
\begin{itemize}
\item \( \pi(d) \in A \) for all \( d \in D \),
\item \( D \) is linearly independent over \( K \),
\item \( w \in (\text{span}_K D)\setminus D \).
\end{itemize}
Thus \( \pi(D) \) is a subset of \( A \) such that \( \pi(D) \in \mathcal{I} \) while \( \{ wK^\times \} \cup \pi(D) \notin \mathcal{I} \). This gives us \( wK^\times \in \textup{cl}(A) \), as desired.

From the above, together with the easily verifiable fact that \( V_{\mathbb{P}_K(W)} = W \) for any $K$-subspace \( W \) of \( L \), one obtains that the flats of \( G \) are precisely the sets \( \mathbb{P}_K(W) \), where \( W \) ranges over the $K$-subspaces of \( L \).

To see that \( \rho(A) = \dim_K V_A \), let \( \mathcal{A} \) be a basis of \( A \) and choose a representative \( z_a \in a \) for each \( a \in \mathcal{A}\). The set \( \{ z_a : a \in \mathcal{A} \} \) is linearly independent over \( K \), and we claim that it is a $K$-basis of \( V_A \).  Assume it is not.  Let  \( \mathcal{B} \) be a $K$-basis of \( V_A \) such that \( \mathcal{B} \subseteq \{ x \in L^\times : \pi(x) \in A \} \). Then there exists a $K$-basis \( \mathcal{B}^* \) of \( V_A \) such that 
\[
\{ z_a : a \in \mathcal{A}\} \,\subsetneq\, \mathcal{B}^* \,\subseteq\, \{z_a : a \in \mathcal{A}\} \cup \mathcal{B}.
\]
We now have that \( \pi(\mathcal{B}^*) \) is a projectively independent subset of \( A \) and 
\[ 
\mathcal{A} \,=\, \pi\big(\{z_a:a \in \mathcal{A}\}\big) \,\subsetneq\, \pi(\mathcal{B}^*), 
\] 
contradicting the fact that \( \mathcal{A} \) is a basis of \( A \). The claim is proved, and it follows that 
\[
\rho(A) \,=\, |\mathcal{A}| \,=\, |\{ z_a : a \in \mathcal{A} \}| \,=\, \dim_K V_A.
\]

Finally, if \(U,W \) are \(K\)-subspaces of \( L \), it is easy to see that 
\[
\mathbb P_K(U)\cap\mathbb P_K(W) = \mathbb P_K(U\cap W) \quad \text{and} \quad V_{\mathbb{P}_K(U) \cup \mathbb{P}_K(W)} = U + W.
\]
Using these equations along with the dimension formula 
\[
\dim_K U+\dim_K W = \dim_K(U+W)+\dim_K(U\cap W),
\]
we find that 
\[
\rho\big(\mathbb{P}_K(U)\big) + \rho\big(\mathbb{P}_K(W)\big) = \rho\big( \mathbb{P}_K(U) \cup \mathbb{P}_K(W) \big) + \rho\big(\mathbb{P}_K(U)\cap \mathbb{P}_K(W)\big),
\]
and modularity is proved.
\end{proof}

Let \( G \) be an independence group and let \( M \) be a submonoid of \( G \).  If \( M \) is also a flat of \( G \), we call it a \emph{flat submonoid}.

\begin{proposition}[Projective flat submonoids]  \label{projective submonoids}
Let \(K\subsetneq L\) be a finite extension of fields, and let
\[
L^\times/K^\times
\]
be the projective independence group of Proposition~\ref{projective independence group}. Then the assignment
\[
F\longmapsto F^\times/K^\times
\]
is a bijection from the set of intermediate fields
\[
K\subseteq F\subseteq L
\]
to the set of flat submonoids of \(L^\times/K^\times\).

In particular, if \(M\subseteq L^\times/K^\times\) is a nonempty flat, then \(M\) is a submonoid if and only if there is a unique intermediate field \(F\) such that
\[
M=\mathbb P_K(F)=F^\times/K^\times.
\]
In this case,
\[
\rho(M)=[F:K].
\]
Thus every flat submonoid of \( L^\times/K^\times \) is a subgroup.
\end{proposition}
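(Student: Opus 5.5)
By Proposition~\ref{projective independence group}, every flat of \(G=L^\times/K^\times\) has the form \(\mathbb P_K(W)\) for a unique \(K\)-subspace \(W\) of \(L\), namely \(W=V_{\mathbb P_K(W)}\). So the plan is to translate the submonoid condition on \(\mathbb P_K(W)\) into a condition on \(W\), and then show that condition forces \(W\) to be a field.

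\emph{Forward direction.} Let \(F\) be an intermediate field. Then \(F\) is a \(K\)-subspace, so \(F^\times/K^\times=\pi(F\setminus\{0\})=\mathbb P_K(F)\) is a flat. Since \(F^\times\) is a subgroup of \(L^\times\) containing \(K^\times\), its image is a subgroup of \(G\), and in particular a submonoid. Moreover \(\rho(\mathbb P_K(F))=\dim_K V_{\mathbb P_K(F)}=\dim_K F=[F:K]\).

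\emph{Injectivity.} Apply \(V\): we have \(V_{\mathbb P_K(F)}=F\), so distinct fields give distinct flats. This also yields the uniqueness assertion in the ``in particular'' clause.

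\emph{Surjectivity.} Let \(M\) be a flat submonoid, and write \(M=\mathbb P_K(W)\) with \(W=V_M\).
\begin{itemize}
\item Since \(1\in M\), some element of \(K^\times\) lies in \(W\), so \(K\subseteq W\).
\item Take \(x,y\in W\setminus\{0\}\). Then \(\pi(xy)=\pi(x)\pi(y)\in M\), so \(xy\) is a nonzero scalar multiple of an element of \(W\), and hence \(xy\in W\). Together with the case where a factor is \(0\), this shows \(W\) is closed under multiplication.
\item So \(W\) is a \(K\)-subalgebra of \(L\), which is a finite-dimensional integral domain over \(K\) because \([L:K]<\infty\). For nonzero \(w\in W\), multiplication by \(w\) is an injective \(K\)-linear map \(W\to W\), hence surjective, so \(w^{-1}\in W\).
\end{itemize}
Thus \(W=F\) is an intermediate field and \(M=F^\times/K^\times\).

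The ``in particular'' statement follows: a nonempty flat is a submonoid exactly when it lies in the image of the bijection. The final claim holds because each \(F^\times/K^\times\) is a subgroup.

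The main obstacle is inverse-closure, which is exactly where finiteness of the extension is used. The remaining steps are bookkeeping, chiefly that \(\pi(xy)\in\mathbb P_K(W)\) forces \(xy\in W\), which rests on \(W\) being closed under \(K^\times\)-scaling.
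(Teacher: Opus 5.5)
Your proposal is correct and follows essentially the same route as the paper. You write a flat submonoid as $\mathbb P_K(W)$, use $1\in M$ to get $K\subseteq W$, and use $\pi(xy)=\pi(x)\pi(y)$ together with $K^\times$-stability of $W$ to get closure under multiplication. You then use finite-dimensionality to make multiplication by a nonzero element surjective, and obtain uniqueness and the rank formula $\rho(\mathbb P_K(F))=[F:K]$ from $V_{\mathbb P_K(F)}=F$, exactly as the paper does.
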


\begin{proof}
Let \(F\) be an intermediate field of \( K \subsetneq L \). Since \(F\) is a $K$-subspace of \(L\), Proposition~\ref{projective independence group} shows that \(\mathbb P_K(F)=F^\times/K^\times \) is a nonempty flat of the projective independence group. Moreover, \(F^\times/K^\times\) is a subgroup, and hence a submonoid, of \(L^\times/K^\times\). Using the rank formula in Proposition~\ref{projective independence group}, we get 
\[
\rho\big(\mathbb{P}_K(F)\big) = \dim_K V_{\mathbb{P}_K(F)} = \dim_K F = [F:K].
\]

Conversely, let \(M\subseteq L^\times/K^\times\) be a flat submonoid. By Proposition~\ref{projective independence group}, there exists a $K$-subspace \(W\subseteq L\) such that \( M = \mathbb P_K(W) \). Since \(M\) is a submonoid, it contains the identity element of \( L^\times / K^\times \).  Thus, if \( \pi : L^\times \rightarrow L^\times / K^\times \) is the quotient map, we have \(\pi(1)\in\mathbb P_K(W) = \pi\big(W\setminus \{0\}\big) \), so \(W\) contains a nonzero element of \(K\). Because \(W\) is a $K$-subspace, it follows that \( K\subseteq W \).

We claim that \(W\) is closed under multiplication. Let \(x,y\in W\). There is nothing to prove if \(x=0\) or \(y=0\), so assume \(x,y\neq 0\). Then
\[
\pi(x),\pi(y)\in M.
\]
Since \(M\) is a submonoid,
\[
\pi(xy)=\pi(x)\pi(y)\in M=\mathbb P_K(W).
\]
Consequently, there exist \(w\in W\setminus \{ 0 \} \) and \(c\in K^\times\) such that \( xy = cw \). Because \(W\) is a $K$-subspace, this implies \(xy\in W\). Therefore
\(W\) is a \(K\)-subalgebra of \(L\).

It remains to show that \(W\) is closed under inverses. Let \( x\in W \setminus \{ 0 \} \).  Multiplication by \(x\) defines a $K$-linear map
\begin{equation*}
\begin{aligned}
m_x :\; &W \longrightarrow W \\
&w \longmapsto xw.
\end{aligned}
\end{equation*}
This map is injective because \(W\subseteq L\) and \(L\) is a field. Moreover, \(W\) is finite-dimensional over \(K\), since \( W\subseteq L \) and \( [L:K] < \infty \). Thus \(m_x\) is surjective. Since \(1\in W\), there exists \(y\in W\)
such that
\[
m_x(y) = xy = 1.
\]
Therefore \(x^{-1}=y\in W\). It follows that \(W\) is a field. Setting
\(F=W\), we obtain an intermediate field
\[
K\subseteq F\subseteq L
\]
such that
\[
M=\mathbb P_K(F)=F^\times/K^\times.
\]

To see that \(F\) is uniquely determined by \(M\), let \( F' \) be an intermediate field of \( K \subsetneq L \) with \( M = \mathbb{P}_K(F') \). Then in the notation of Proposition~\ref{projective independence group},
\[ 
F' = V_{\mathbb{P}_K(F')} = V_M = V_{\mathbb{P}_K(F)} = F,
\]
completing the proof.
\end{proof}

In the proposition below, we describe a different way to construct modular independence groups.  Here the groups arise as partition matroids, with blocks chosen to be the left cosets of some fixed subgroup.

\begin{proposition}[Coset-partition independence groups] \label{coset partition proposition}

Let \(G\) be a group and let \(H\) be a subgroup. Define \(\mathcal{I}_H\) to be the collection of all subsets \( A \)  of \( G \) satisfying
\[
|A \cap gH|\leq 1 \qquad\text{for all } g\in G.
\]
Then \( \mathcal{I}_H \) is an independence structure on \( G \).  With this structure, \( G \) is a modular independence group.

Moreover, for every \(X\subseteq G\),
\[
\textup{cl}(X)=XH \qquad \text{and} \qquad \rho(X)=|\{xH:x\in X\}|.
\]
Consequently, the flats of \(G\) are precisely the unions of left cosets of \(H\).
\end{proposition}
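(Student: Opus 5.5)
We verify the three axioms for \(\mathcal{I}_H\), then compute rank and closure, and finally deduce left-invariance and modularity. Throughout, the left cosets of \(H\) partition \(G\), and \(A\in\mathcal{I}_H\) means that \(A\) meets each coset in at most one element.

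\textbf{Axioms.} Nonemptiness holds because \(\emptyset\in\mathcal{I}_H\).
\begin{itemize}
\item[(I1)] This is immediate, since subsets inherit the condition \(|A\cap gH|\le 1\).
\item[(I2)] Let \(A,B\) be finite independent sets with \(|B|=|A|+1\). The cosets met by \(B\) are \(|B|\) distinct cosets, while \(A\) meets only \(|A|\) cosets. Hence some \(x\in B\) lies in a coset not met by \(A\). Then \(x\notin A\) and \(A\cup\{x\}\in\mathcal{I}_H\).
\item[(I3)] If \(A\notin\mathcal{I}_H\), then some coset contains two elements of \(A\), and that two-element subset is a finite dependent subset of \(A\).
\end{itemize}

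\textbf{Left-invariance.} Left multiplication by \(g\) permutes the left cosets, since \(g(xH)=(gx)H\). Therefore \(|gA\cap gxH|=|A\cap xH|\), and \(gA\in\mathcal{I}_H\) if and only if \(A\in\mathcal{I}_H\).

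\textbf{Rank.} Fix \(X\subseteq G\). Choose one element of \(X\) from each coset meeting \(X\); this is where the axiom of choice is used. The resulting set \(T\) is independent, and it is maximal: any further element of \(X\) shares a coset with an element of \(T\). Thus \(T\) is a basis of \(X\), and \(\rho(X)=|T|=|\{xH:x\in X\}|\), computed as a cardinal.

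\textbf{Closure.} We show \(\textup{cl}(X)=XH\).
\begin{itemize}
\item \(XH\subseteq\textup{cl}(X)\): let \(y\in xH\) with \(x\in X\) and \(y\notin X\). Then \(\{x\}\) is an independent subset of \(X\), and \(\{y,x\}\) is dependent, so \(y\mid X\).
\item \(\textup{cl}(X)\subseteq XH\): let \(y\notin XH\). Then \(y\notin X\), and \(yH\) is disjoint from every independent \(C\subseteq X\). Hence \(C\cup\{y\}\) is independent, so \(y\nmid X\).
\end{itemize}
It follows that flats are exactly the sets satisfying \(X=XH\), which are precisely the unions of left cosets.

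\textbf{Modularity.} Let \(A,B\) be flats. Write \(\mathcal{C}_A\) for the set of cosets contained in \(A\), and similarly \(\mathcal{C}_B\). Then \(A\cup B\) is the union of the cosets in \(\mathcal{C}_A\cup\mathcal{C}_B\). Because the cosets are disjoint, \(A\cap B\) is the union of the cosets in \(\mathcal{C}_A\cap\mathcal{C}_B\). By the rank formula,
\[
\rho(A)+\rho(B)=|\mathcal{C}_A|+|\mathcal{C}_B|,\qquad \rho(A\cup B)+\rho(A\cap B)=|\mathcal{C}_A\cup\mathcal{C}_B|+|\mathcal{C}_A\cap\mathcal{C}_B|.
\]
The cardinal identity \(|P|+|Q|=|P\cup Q|+|P\cap Q|\), already used in Proposition~\ref{basic modular examples}(i), shows these sums are equal, so \(G\) is modular.

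The only point requiring care is this last step: one must observe that the intersection of two unions of cosets is the union of the common cosets, which relies on the cosets being pairwise disjoint.
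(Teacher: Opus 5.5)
Your proposal is correct and follows essentially the same route as the paper. You verify the axioms by the same coset-counting exchange argument, compute rank via one representative per coset, and prove the two closure inclusions as the paper does. For modularity you pass to the sets of cosets contained in each flat and use the cardinal identity $|P|+|Q|=|P\cup Q|+|P\cap Q|$; the paper does the same with its sets $\Omega_F$.
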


\begin{proof}
It is clear that the hereditary axiom holds for \( \mathcal{I}_H \). Suppose that \(A,B \in \mathcal{I}_H\) are finite and \( |B| = |A|+1 \).  Since an independent set contains at most one element from each left coset of \(H\), the set \(B\) meets \(|B|\) distinct left cosets, whereas \(A\) meets only \(|A|\) distinct left cosets. Hence there exists \(b\in B\) such that \( bH\cap A = \emptyset \). It follows that \( A \cup\{b\}\in\mathcal{I}_H \), so the exchange axiom holds.

Now let \(A \subseteq G\) and suppose that every finite subset of \(A\) belongs to \(\mathcal{I}_H\). If two distinct elements \(x,y\in A\) belonged to the same left coset of \(H\), then the finite set \(\{x,y\}\) would be dependent. Therefore \(A \in\mathcal{I}_H\), and the finite-character axiom holds for \( \mathcal{I}_H \).

For every \(g\in G\), left multiplication by \(g\) permutes the left cosets of \(H\), since \( g(xH)=(gx)H \). It follows that a subset of \(G\) is independent if and only if its left translate by \(g\) is independent. Thus \( G \) is an independence group when equipped with \( \mathcal{I}_H \).

Let \(X\subseteq G\). A maximal independent subset of \(X\) contains exactly one representative from every left coset of \(H\) that meets \(X\). Indeed, it contains at most one such representative by independence, and maximality forces it to contain at least one.  Therefore
\[
\rho(X)=|\{xH:x\in X\}|.
\]

We next verify the formula for \( \textup{cl}(X) \). Let \(y\in XH\). If \(y\in X\), then \(y\in\textup{cl}(X)\) by definition. Otherwise, there exists \(x\in X\) such that \( x \neq y \) and \( yH = xH \).  The singleton \(\{x\}\) is independent, while \(\{x,y\}\) is dependent. Hence \(y\) depends on \(X\), and so \( y\in\textup{cl}(X) \). 

Conversely, suppose \(z \notin XH\). Then the coset \(zH\) does not meet \(X\). Consequently, for every independent subset \( C \) of  \( X \), the set \( C \cup \{z\} \) is independent. Hence \(z\) does not depend on \(X\), and therefore \( z \notin\textup{cl}(X) \). We conclude that \( \textup{cl}(X)=XH \). It follows that \(X\) is a flat if and only if \(X=XH\), which is
equivalent to saying that \(X\) is a union of left cosets of \(H\).

Finally, we verify modularity. For each \( X \subseteq G \), define 
\[
\Omega_X = \{gH: g \in G \text{ and }gH\subseteq X\}.
\]
Let \(F\) and \(K\) be flats of \(G\), and observe that, since each of these is a union of left cosets of \( H \), we have 
\[
\rho(F)=|\Omega_F|,
\qquad
\rho(K)=|\Omega_K|,
\]
and
\[
\Omega_{F\cup K}=\Omega_F\cup\Omega_K,
\qquad
\Omega_{F\cap K}=\Omega_F\cap\Omega_K.
\]
Therefore, in cardinal arithmetic,
\begin{align*}
\rho(F)+\rho(K)
&=
|\Omega_F|+|\Omega_K|\\
&=
|\Omega_F\cup\Omega_K|
+
|\Omega_F\cap\Omega_K|\\
&=
\rho(F\cup K)+\rho(F\cap K).
\end{align*}
Thus \(G\) is modular.
\end{proof}

Below we present an example of a coset-partition independence group, constructed by taking \( G \) and \( H \) to be the edge space and cycle space, respectively, of a graph \( \Gamma \).  An introductory discussion of these spaces can be found in \cite{Diestel}.

\begin{example}[A cycle-space independence group]
\label{cycle space independence example}
Let \(\Gamma=(\mathsf{V},\mathsf{E})\) be a finite simple graph, with vertex set \( \mathsf{V} \) and edge set \( \mathsf{E} \), and consider the Boolean group
\[
G=(2^{\mathsf{E}},\triangle),
\]
where \(\triangle\) denotes symmetric difference. Let
\[
\mathcal{Z}(\Gamma)
=
\{Z\subseteq \mathsf{E} :
\text{every vertex has even degree in }(\mathsf{V},Z)\}.
\]
Then \(\mathcal{Z}(\Gamma)\) is a subgroup of \(G\).  It is known as the \emph{cycle space} of \(\Gamma\).

Define an independence structure \( \mathcal{I} = \mathcal{I}_{\mathcal{Z}(\Gamma)} \) on \(G\) by declaring a subset \( \mathcal{A} \) of \(2^{\mathsf{E}}\) to be independent if no two distinct members of \( \mathcal{A} \) belong to the same coset of \(\mathcal{Z}(\Gamma)\). Equivalently, \( \mathcal{A} \in \mathcal{I} \) provided
\[
X\triangle Y\notin\mathcal{Z}(\Gamma)
\qquad
\text{for all distinct }X,Y\in \mathcal{A}.
\]

By Proposition~\ref{coset partition proposition}, this structure makes \( G \) a modular independence group. The rank of any subset \( \mathcal{A} \) of \( 2^{\mathsf{E}} \) is given by
\[
\rho(\mathcal{A}) = \left| \left\{ X\triangle\mathcal{Z}(\Gamma): X\in \mathcal{A} \right\} \right|,
\]
where
\[
X\triangle\mathcal{Z}(\Gamma)
=
\{X\triangle Z:Z\in\mathcal{Z}(\Gamma)\},
\]
and its closure is
\[
\textup{cl}(\mathcal{A}) = \bigcup_{X\in \mathcal{A}} \bigl(X\triangle\mathcal{Z}(\Gamma)\bigr).
\]
Thus the flats are precisely the unions of cosets of the cycle space.
\end{example}

\begin{remark}
The preceding example of an independence group, constructed relative to the cycle-space cosets in the edge space of a graph, should not be confused with the usual graphic matroid: here the ground set is the collection \(2^\mathsf{E}\) of all edge subsets of \(\Gamma\), rather than the edge set \(\mathsf{E}\) itself.
\end{remark}

\section{Generalized e-transform and Structural Characterization of Matched Pairs} \label{e-transform and char sec}

In this section we develop a structural theory for matchings in modular abelian independence groups.  Our first task will be to introduce a generalized version of the $e$-transform from additive number theory (Theorem~\ref{Dyson G}). We will then use this tool to prove our main characterization result, Theorem~\ref{matching characterization}. The generalized $e$-transform will also be employed later, in the proof of Theorem~\ref{threshold product growth}. Several applications of Theorem~\ref{matching characterization} (and an alternative formulation, Theorem~\ref{simpler characterization}) will be given, starting in this section and continuing into Section~\ref{self}. 

We begin with a simple lemma that explains how multiplication and the closure operator interact.

\begin{lemma} \label{product and closure}
Let \( G \) be an independence group.  Then the following assertions hold:
\begin{itemize}
\item[(i)] Suppose \( x \in G \) and \( B \subseteq G\) with \( x \mid B \).  Then \( gx \mid gB \) for all \( g\in G \). 
\item[(ii)]  Let \( A,B \) be subsets of \( G \). Then \( A\big(\textup{cl}(B)\big) \subseteq \textup{cl}(AB) \).
\end{itemize}
\end{lemma}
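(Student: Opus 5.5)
For part (i), I will argue directly from the definition of dependence, using that left multiplication by \( g \) is an automorphism of \( (G,\mathcal{I}) \). Suppose \( x \mid B \) and fix \( g \in G \).

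If \( x \in B \), then \( gx \in gB \), and so \( gx \mid gB \) at once.

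Otherwise \( x \notin B \), and there is an independent subset \( C \subseteq B \) with \( \{x\} \cup C \notin \mathcal{I} \). Then \( gC \subseteq gB \), and \( gC \) is independent by Definition~\ref{IndGr}. Also \( g(\{x\}\cup C) = \{gx\} \cup gC \), which is dependent because the translation map preserves dependence in both directions. Moreover \( gx \notin gB \), since left multiplication is injective. Hence \( gx \mid gB \). This is the same computation that appears in the proof of Proposition~\ref{preservation of flatness and rank}(ii), so no obstacle is expected here.

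For part (ii), take an element of \( A\,\textup{cl}(B) \); write it as \( ay \) with \( a \in A \) and \( y \in \textup{cl}(B) \), so that \( y \mid B \). By part (i), \( ay \mid aB \). Since \( aB \subseteq AB \), Proposition~\ref{closure properties}(ii) gives
\[
ay \in \textup{cl}(aB) \subseteq \textup{cl}(AB).
\]
One could equally invoke Proposition~\ref{dependence} with \( aB \mid AB \). If \( A \) is empty, both sides are empty and the inclusion is trivial.

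The only point needing care is that "automorphism" in Definition~\ref{IndGr} gives preservation of both independence and dependence. This is exactly what the "if and only if" in the definition of automorphism provides, so I do not anticipate any real difficulty.
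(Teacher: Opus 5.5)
Your proposal is correct and follows essentially the same argument as the paper. Part (i) translates the dependent set $\{x\}\cup C$ by $g$, and part (ii) applies (i) elementwise and then uses monotonicity of closure, $\textup{cl}(aB)\subseteq\textup{cl}(AB)$.
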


\begin{proof}
Let \( \mathcal{I} \) denote the independence structure on \( G \). For (i), the conclusion is clear if \( x \in B \), so assume \( x \notin B \).  Then \( \{ x \} \cup C \notin \mathcal{I} \) for some independent subset \( C \) of \( B \).   Hence, for any \( g \in G \), we have that \( \{gx \} \cup gC \notin \mathcal{I} \) and \( gC \) is an independent subset of \( gB \).  Since \( gx \notin gB \), we conclude that \( gx \mid gB \). 

Turning to (ii), let us suppose \( a \in A \) and \( x \in \textup{cl}(B) \). Then \( x \mid B \), and part (i) gives us \( ax \mid aB \), i.e., \( ax \in \textup{cl}(aB)\). Since \(\textup{cl}(aB) \subseteq \textup{cl}(AB) \), it follows that \( ax \in \textup{cl}(AB) \), as desired.
\end{proof}

We are ready to introduce the generalized $e$-transform. Roughly speaking, the transform works in the following way.  Given a pair \( (S,R) \) of nonempty finite-rank flats, the transform enlarges \( S \) and shrinks \( R \), while preserving the sum of the ranks of the two flats.  Iteration of this construction terminates at a pair \( (S',R') \), where the first flat is invariant under multiplication by the second. Details are given in the proof of the theorem below.  Note that this result underlies all of the structural theory that follows.

\begin{theorem} \label{Dyson G}
Let \( G \) be a modular abelian independence group. Suppose that \( A, S, R \) are nonempty flats of \( G \), all three of finite rank. Assume that \( 1 \in R \) and \( SR \subseteq A \). Then there exist nonempty flats \( S', R' \) of \( G \), both of finite rank, such that the following four conditions hold:
\begin{itemize}
\item[(i)]  \( S \subseteq S'R' \subseteq \textup{cl}(SR) \subseteq A \),
\item[(ii)]  \( 1 \in R' \subseteq R \),
\item[(iii)] \( \rho(S') + \rho(R') = \rho(S) + \rho(R) \),
\item[(iv)] \( S'R' = S' \subseteq A \).
\end{itemize}
\end{theorem}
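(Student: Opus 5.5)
We will prove the theorem by iterating a single rank-preserving transform and terminating via a descent argument on \( \rho(R) \). The classical \( e \)-transform replaces \( (S,R) \) by \( (S\cup eR,\, R\cap e^{-1}S) \). Here the union is replaced by the closure of the union, so that we stay within flats.

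\textbf{The step.} Suppose \( S,R \) are nonempty finite-rank flats with \( 1\in R \), \( SR\subseteq A \), and \( SR\neq S \). Then there are \( s\in S \) and \( r\in R \) with \( sr\notin S \). Since \( G \) is abelian, \( s^{-1}S \) is a flat containing \( 1 \) by Proposition~\ref{preservation of flatness and rank}. Define
\[
S_1=\textup{cl}(S\cup sR), \qquad R_1=R\cap s^{-1}S .
\]
Both are flats: \( sR \) is a flat, and Proposition~\ref{flat properties}(i) handles \( R_1 \).

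We have \( 1\in R_1 \), and \( R_1\subsetneq R \) because \( r\notin s^{-1}S \). Hence \( \rho(R_1)<\rho(R) \) by Proposition~\ref{flat properties}(ii): a proper subflat of a finite-rank flat has strictly smaller rank, since adding an outside element raises the rank.

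\textbf{Rank preservation.} \( S \) and \( sR \) are flats, so modularity gives
\[
\rho(S_1)=\rho(S\cup sR)=\rho(S)+\rho(R)-\rho(S\cap sR),
\]
using Proposition~\ref{closure properties}(iv). Moreover \( S\cap sR=s(s^{-1}S\cap R)=sR_1 \), whose rank is \( \rho(R_1) \). Therefore \( \rho(S_1)+\rho(R_1)=\rho(S)+\rho(R) \).

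\textbf{Containments.} We need \( S\subseteq S_1R_1 \) and \( S_1R_1\subseteq \textup{cl}(SR) \).

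For the first, \( S\subseteq S_1\cdot 1\subseteq S_1R_1 \).

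For the second, compute
\[
(S\cup sR)R_1\subseteq SR\cup sRR_1 .
\]
Since \( R_1\subseteq s^{-1}S \), we get \( sRR_1\subseteq R\,s s^{-1}S=RS \), using commutativity. So \( (S\cup sR)R_1\subseteq SR \). Lemma~\ref{product and closure}(ii) then gives
\[
S_1R_1=R_1\,\textup{cl}(S\cup sR)\subseteq \textup{cl}(SR).
\]
Finally \( \textup{cl}(SR)\subseteq A \) because \( A \) is a flat containing \( SR \).

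In particular \( S_1R_1\subseteq A \), so the hypotheses are preserved, and \( \textup{cl}(S_1R_1)\subseteq \textup{cl}(SR) \).

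\textbf{Iteration and conclusion.} Repeat the step for as long as \( S_iR_i\neq S_i \). Each \( \rho(R_i) \) is a nonnegative integer that strictly decreases, and \( R_i \) always contains \( 1 \), so the process stops at some pair \( (S',R') \) with \( S'R'=S' \).

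The conclusions then follow by chaining:
\begin{itemize}
\item[(ii)] holds because \( 1\in R'\subseteq R \).
\item[(iii)] holds by the rank identity at each step.
\item[(i)] holds because \( S\subseteq S_1\subseteq\cdots\subseteq S' \) and \( S'=S'R'\subseteq \textup{cl}(S_{k-1}R_{k-1})\subseteq\cdots\subseteq\textup{cl}(SR)\subseteq A \).
\item[(iv)] holds by the stopping condition together with the same chain.
\end{itemize}
Finiteness of rank of \( S' \) follows from (iii), and nonemptiness of each \( S_i \) is clear.

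\textbf{Main obstacle.} The delicate point is the rank bookkeeping. Modularity applies only to flats, and the union \( S\cup sR \) is not a flat, so we must pass to its closure; this is why \( S_1 \) is defined as a closure. Taking the closure in turn forces the product containment to go through Lemma~\ref{product and closure}(ii) rather than a direct elementwise check.
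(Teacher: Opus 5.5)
Your proof is correct and follows the paper's argument essentially step for step. You use the same transform \(S_1=\textup{cl}(S\cup sR)\), \(R_1=R\cap s^{-1}S\), the same modularity computation via \(S\cap sR=sR_1\) for rank preservation, and the same appeal to Lemma~\ref{product and closure} for \(S_1R_1\subseteq\textup{cl}(SR)\). The only cosmetic difference is the termination measure: you use the strict decrease of \(\rho(R_i)\), witnessed by \(r\in R\setminus R_1\), whereas the paper uses the strict increase of \(\rho(S_i)\), bounded by \(\rho(A)\). By the rank identity these are the same measure.
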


\begin{proof}
First observe that if \( SR = S \), we can simply take \( S' = S \) and \( R' = R \).  Assume that  \( SR \neq S \). Then, since \( 1 \in R \), there exists an element \( x_0 \in SR\setminus S \).  Write \( x_0 = er \) with \(e \in S \) and \( r \in R \). Define
\[
S_1 = \textup{cl}(S \cup eR)\quad \text{and} \quad R_1 = R \cap (Se^{-1}), 
\]
and note that \( S_1, R_1 \) are flats, by Propositions~\ref{closure properties}, \ref{flat properties}, and \ref{preservation of flatness and rank}.  We call the pair \( (S_1,R_1) \) the $e$-transform of \( (S,R) \). We claim that the five conditions below hold:
\begin{itemize}
\item[(a)] \( S_1R_1 \subseteq \textup{cl}(SR) \subseteq A \),
\item[(b)] \( 1 \in R_1 \subseteq R \),
\item[(c)] \( \rho(S_1) + \rho(R_1) = \rho(S) + \rho(R) \),
\item[(d)] \( S \subseteq S_1 \subseteq A \mbox{ and }\rho(S) < \rho(S_1) \),
\item[(e)] \( S_1, R_1 \) are nonempty and have finite rank.
\end{itemize}
Condition (b) is clear. For (a), we first use Lemma~\ref{product and closure} and commutativity to obtain 
\begin{equation*}
\begin{aligned}
S_1R_1 \,=\, \textup{cl}(S\cup eR)R_1 &\,=\, R_1\big( \textup{cl}(S\cup eR)\big) \\
&\,\subseteq\, \textup{cl}\big( R_1(S\cup eR) \big) \\
&\,=\, \textup{cl}\big( (S\cup eR)R_1 \big) . 
\end{aligned}
\end{equation*}

Since \( (S\cup eR)R_1  \subseteq SR \subseteq A \), we also have 
\[
\textup{cl}\big( (S \cup eR)R_1 \big) \,\subseteq\, \textup{cl}(SR) \,\subseteq\, \textup{cl}(A) \,=\, A.
\]
Combining inclusions gives us what we want.  

The inclusion \( S \subseteq S_1 \) in (d) is immediate.  The other inclusion there follows from (a) and (b) (specifically, \( S_1R_1 \subseteq A \) and \( 1 \in R_1 \)).  The inequality in (d) is a consequence of Proposition~\ref{flat properties}, since \( S \) is a finite-rank flat, \( S \subseteq S_1\), and \( x_0 \in S_1 \setminus S \). Observe that (e) follows easily from (b) and (d). 

Finally, for (c) we first note that \( S \) and  \( eR \) are flats. Then, using Propositions~\ref{closure properties} and \ref{preservation of flatness and rank} along with commutativity and the modularity of \( G \), we compute  
\begin{align*}
\rho(S_1) \,=\, \rho\big(\textup{cl}(S \cup eR)\big) &\,=\, \rho(S \cup eR) \\
&\,=\, \rho(S) + \rho(eR) - \rho(S \cap eR) \\
&\,=\, \rho(S) + \rho(R) - \rho\big( e^{-1}(S\cap eR)\big) \\
&\,=\, \rho(S) + \rho(R) - \rho(e^{-1}S\cap R) \\
&\,=\, \rho(S) + \rho(R) - \rho(R_1).
\end{align*}
All five conditions have been verified.

If \( S_1R_1 \neq  S_1 \), the above is repeated, this time by forming the $e$-transform of \( (S_1,R_1) \) rather than \( (S,R) \).  The procedure continues until we reach  a pair \( (S_n,R_n) \) of nonempty flats, both of finite rank, satisfying \( S_nR_n =  S_n \). This is guaranteed to occur because the sequence \( \rho(S_1), \rho(S_2), \ldots \) is strictly increasing and bounded above by the finite cardinal \( \rho(A) \). Let \( S' = S_n \) and \( R' = R_n \). These sets clearly satisfy the conditions in the statement. 
\end{proof}

\begin{remark}
The assumptions of commutativity and modularity play distinct roles in the proof of Theorem~\ref{Dyson G}. Commutativity is used to ensure that the transformed product \( S_1R_1 \) remains inside the closure of the original product \( SR \), while modularity gives
$$
\rho(S_1)+\rho(R_1)=\rho(S)+\rho(R).
$$
Without commutativity, the required product containment may fail, and without modularity, one has only
$$
\rho(S_1)+\rho(R_1)\leq\rho(S)+\rho(R),
$$
whereas in the proof of Theorem~\ref{matching characterization} below, one needs the reverse inequality.
\end{remark}

The following theorem gives a structural characterization of the (nontrivial) pairs of flats that are matched.  Using this theorem and its alternative version, Theorem~\ref{simpler characterization}, we will obtain a result on the matchability of a flat with itself (Theorem~\ref{self matching criterion}) and a characterization of a certain global matching property (Theorem~\ref{MMP}).

We note that the structural aspect of the theorem comes from the condition \( SR = S \), the implications of which will be seen later in Lemma~\ref{monoid lemma} and in subsequent results.

\begin{theorem}  \label{matching characterization}
Let \( G \) be a modular abelian independence group and let \( n \) be a natural number.  Suppose  \( A,B \) are flats of \( G \), both of rank \( n \). Then \( A \) is matched to \( B \) if and only if, for every pair of nonempty flats \( S \subseteq A \) and \( R \subseteq \textup{cl}(B\cup \{ 1 \}) \) with \( SR = S \), we have 
\[
\rho(S) + \rho(R\cap B) \,\leq\, n.
\]
\end{theorem}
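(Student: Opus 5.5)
The plan is to prove both directions through the Rado-type criterion of Theorem~\ref{existence condition}, using the generalized $e$-transform (Theorem~\ref{Dyson G}) for the sufficiency direction.

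\emph{Necessity.} Suppose there are nonempty flats \( S\subseteq A \) and \( R\subseteq \textup{cl}(B\cup\{1\}) \) with \( SR=S \) and \( \rho(S)+\rho(R\cap B)>n \). Note \( \rho(S)\le n \) is finite.
\begin{itemize}
\item Choose a basis \( J \) of \( S \) and extend it to a basis \( \mathcal A \) of \( A \) (Proposition~\ref{basis properties}), so \( |J|=\rho(S) \).
\item For \( a\in J \) we have \( aR\subseteq SR=S\subseteq A \). Hence \( R\subseteq a^{-1}A \), and so \( R\cap B\subseteq \bigcap_{a\in J}(a^{-1}A\cap B) \).
\item By monotonicity this intersection has rank at least \( \rho(R\cap B)>n-|J| \).
\item Theorem~\ref{existence condition} then gives a basis of \( B \) to which \( \mathcal A \) is not matchable, so \( A \) is not matched to \( B \).
\end{itemize}

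\emph{Sufficiency.} Suppose \( A \) is not matched to \( B \). By Theorem~\ref{existence condition} there are a basis \( \mathcal A \) of \( A \) and \( J\subseteq\mathcal A \) such that \( C=\bigcap_{a\in J}(a^{-1}A\cap B) \) satisfies \( \rho(C)>n-|J| \). Since \( \rho(B)=n \), the set \( J \) is nonempty.

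We set up the data for Theorem~\ref{Dyson G} as follows.
\begin{itemize}
\item Take the ambient flat to be \( A \), take \( S_0=\textup{cl}(J) \), and take
\[
R_0=\Bigl(\bigcap_{a\in J}a^{-1}A\Bigr)\cap \textup{cl}(B\cup\{1\}).
\]
\item \( R_0 \) is a flat by Propositions~\ref{flat properties} and \ref{preservation of flatness and rank}. It contains \( 1 \) because \( J\subseteq A \), and it has finite rank at most \( n+1 \).
\item \( C \) is also a flat, with \( C=R_0\cap B \).
\item For the product condition, \( JR_0\subseteq A \) by construction. Lemma~\ref{product and closure}(ii) and commutativity then give \( S_0R_0=R_0\,\textup{cl}(J)\subseteq \textup{cl}(R_0J)\subseteq A \).
\end{itemize}
Theorem~\ref{Dyson G} now yields nonempty finite-rank flats \( S'\subseteq A \) and \( R' \) satisfying
\[
1\in R'\subseteq R_0\subseteq \textup{cl}(B\cup\{1\}),\qquad S'R'=S',\qquad \rho(S')+\rho(R')=|J|+\rho(R_0).
\]

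The main obstacle is the rank bookkeeping: we must show \( \rho(S')+\rho(R'\cap B)>n \), which makes \( (S',R') \) a violating pair. We split into two cases.
\begin{itemize}
\item If \( 1\in B \), then \( \textup{cl}(B\cup\{1\})=B \). So \( R'\cap B=R' \) and \( R_0=C \), and the equality above already gives \( |J|+\rho(C)>n \).
\item If \( 1\notin B \), then \( 1\notin C \), and Proposition~\ref{flat properties}(ii) gives \( \rho(R_0)\ge\rho(C)+1 \). Also \( \textup{cl}(B\cup\{1\}) \) has rank \( n+1 \). Applying modularity to the flats \( R' \) and \( B \),
\[
\rho(R'\cap B)=\rho(R')+n-\rho(R'\cup B)\ge \rho(R')-1 .
\]
Therefore
\[
\rho(S')+\rho(R'\cap B)\ge |J|+\rho(R_0)-1\ge |J|+\rho(C)>n .
\]
\end{itemize}
In both cases \( (S',R') \) violates the inequality in the statement, which completes the contrapositive.
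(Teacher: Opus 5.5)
Your proof is correct and follows the paper's argument: necessity comes from Theorem~\ref{existence condition} with $J$ a basis of $S$, and sufficiency comes from applying Theorem~\ref{Dyson G} to $\textup{cl}(J)$ and a flat $R_0$ containing $1$, followed by the same two-case rank count (the case $1\in B$ versus $1\notin B$, with modularity giving $\rho(R'\cap B)\geq \rho(R')-1$). The only difference is that you take $R_0=\bigl(\bigcap_{a\in J}a^{-1}A\bigr)\cap\textup{cl}(B\cup\{1\})$ rather than the paper's $\textup{cl}(C\cup\{1\})$. This makes $JR_0\subseteq A$ immediate and needs only the inequality $\rho(R_0)\geq\rho(C)+1$; under modularity the two flats in fact coincide.
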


\begin{proof}
Assume that \( A \) is matched to \( B \).  Suppose that \( S \) and \( R \) are nonempty flats such that \( S \subseteq A \), \( R \subseteq \textup{cl}(B\cup \{ 1 \}) \), and \( SR = S \).  Choose a basis \( \mathcal{S} \) of \( S \) and, using Proposition~\ref{basis properties}, extend it to a basis \( \mathcal{A} \) of \( A \).  Applying Theorem~\ref{existence condition} to \( \mathcal{A} \) and taking \( J = \mathcal{S} \) in that theorem, we get
\begin{equation} \label{easy direction}
\rho\left(\bigcap_{a \in \mathcal{S}} (a^{-1}A \cap B)\right) \,\leq\, n - |\mathcal{S}| \,=\, n - \rho(S).
\end{equation}
Now, since \( SR = S \), we have \( aR \subseteq S \) for all \( a \in S \), and hence \( R \subseteq a^{-1}S \subseteq a^{-1}A \) for all \( a \in S \). Therefore 
\[
R\cap B \,\subseteq\, \bigcap_{a \in \mathcal{S}} (a^{-1}A \cap B).
\]
Combining this with inequality (\ref{easy direction}), we obtain 
\[
\rho(R\cap B) \,\leq\, n - \rho(S),
\]
as desired.

For the converse, suppose that \( A \) is not matched to \( B \).  Then, by Theorem~\ref{existence condition}, there is a basis \( \mathcal{A} \) of \( A \) and a nonempty subset \( J \) of \( \mathcal{A} \) such that 
\[
\rho\left(\bigcap_{a \in J} (a^{-1}A \cap B)\right) \,>\, n - |J|.
\]
Define
\[
S = \textup{cl}(J) \qquad \text{and} \qquad R = \bigcap_{a \in J} (a^{-1}A \cap B). 
\]
Note that \( R \) is nonempty, since it has positive rank.  It is also a flat, by Propositions~\ref{flat properties} and \ref{preservation of flatness and rank}. Observe also that \(S \) and \(R\) have finite rank and satisfy \( S \subseteq A \) and \( R \subseteq B \), as well as
\[
\rho(S) + \rho(R) \,>\, n.
\]
Define a flat \( R_0 \) by 
\[
R_0 = \textup{cl}(R \cup \{ 1 \}).
\]
Clearly \( R_0 \subseteq \textup{cl}(B \cup \{ 1 \})\). We claim that \( SR_0 \subseteq A \).  To see this, note first that by the definition of \( R \), the inclusion \( JR \subseteq A \) holds. Hence, by Lemma~\ref{product and closure} and commutativity,
\[
SR \,=\, \textup{cl}(J)R \,\subseteq\, \textup{cl}(JR) \,\subseteq\, \textup{cl}(A) \,=\, A.
\]
It follows that \( S(R \cup\{ 1 \}) \subseteq A \), and so, again by Lemma~\ref{product and closure},
\[
SR_0 \,=\, S\big(\textup{cl}(R\cup \{1\})\big) \,\subseteq\, \textup{cl}\big(S(R \cup \{1\})\big) \,\subseteq\, \textup{cl}(A) \,=\, A,
\]
establishing the claim.

We are now able to apply Theorem~\ref{Dyson G} to \( A, S, R_0 \). We obtain nonempty flats \( S', R_0' \) of finite rank such that 
\begin{itemize}
\item[(a)] \( 1 \in R_0' \subseteq R_0 \subseteq \textup{cl}(B \cup \{ 1 \}) \),
\item[(b)] \( \rho(S') + \rho(R_0') = \rho(S) + \rho(R_0) \),
\item[(c)] \( S'R_0' = S' \subseteq A \).
\end{itemize}

The proof will be complete if we can show that \( \rho(S') + \rho(R_0' \cap B ) > n \) (for then the pair \( (S',R_0') \) will serve as a counterexample to the property in the statement of the theorem).  We consider two cases.

{\bf Case 1:} \( 1 \in B \).  In this situation, \( \textup{cl}(B \cup \{ 1 \}) = B \), and so \( R_0' \cap B = R_0' \).  Hence
\begin{align*}
\rho(S') + \rho(R_0' \cap B) &\,=\, \rho(S') + \rho(R_0') \\
&\,=\, \rho(S) + \rho(R_0) \\
&\,\geq\, \rho(S) + \rho(R) \,>\, n,
\end{align*}
as desired.

{\bf Case 2:} \( 1 \notin B \).  By Propositions~\ref{closure properties} and \ref{flat properties}, we have \( \rho\big(\textup{cl}(B\cup \{ 1 \})\big) = \rho(B\cup \{ 1 \}) = n+1 \).  From this it follows that \( \rho(R_0' \cup B ) = n+1 \).  Indeed, the inclusion \( B \cup \{ 1 \} \subseteq R_0' \cup B \) implies \( n+1 = \rho(B \cup \{ 1 \}) \leq \rho(R_0' \cup B) \), while the containment \( R_0' \cup B \subseteq \textup{cl}(B \cup \{ 1 \}) \) gives us \( \rho(R_0' \cup B) \leq \rho\big(\textup{cl}(B\cup \{1\})\big) = n+1 \).   

Therefore, by modularity,
\begin{align*}
\rho(R_0'\cap B) &\,=\, \rho(R_0') + \rho(B) - \rho(R_0'\cup B) \\
&\,=\, \rho(R_0') + n - (n+1) \\
&\,=\, \rho(R_0') - 1.
\end{align*}
We also have
\[
\rho(R_0) = \rho\big(\textup{cl}(R \cup \{ 1 \})\big) = \rho(R \cup \{ 1 \}) =  \rho(R) + 1,
\]
again by Propositions~\ref{closure properties} and \ref{flat properties}.  Combining the last two calculations with (b) above, we obtain
\begin{align*}
\rho(S') + \rho(R_0'\cap B) &\,=\, \rho(S') + \rho(R_0') - 1 \\ 
&\,=\, \rho(S) + \rho(R_0) -1 \\
&\,=\, \rho(S) + (\rho(R) +1) -1 \,>\, n,
\end{align*}
which completes the proof.
\end{proof}

Next we show how the two main results of \cite{Aliabadi 5} can be recovered from Theorem~\ref{matching characterization} as special cases. 

Suppose first that \( G \) is an abelian group.  Equip \( G \) with the universal independence structure \( \mathcal{I} \) and recall from Proposition~\ref{basic modular examples} that \( G \) is then a modular independence group.  For this structure, every subset \( A \) of \( G \) is a flat and its rank is \( |A| \). Thus the inequality in Theorem~\ref{matching characterization} can be rewritten as
\[
|S| + |R \cap B| \,\leq\, n.
\]
Since \( |B| = n \), this is equivalent to 
\[
|S| \,\leq\, |B\setminus R|.
\] 
We therefore have the following corollary, a result which originally appeared as Theorem 2.1 in \cite{Aliabadi 5}.

\begin{corollary} \label{recover group char}
Let \( G \) be an abelian group (with universal independence structure) and let \( A,B \) be finite nonempty subsets of \( G \) with \( |A| = |B| \).  Then \( A \) is matched to \( B \) if and only if, for all nonempty subsets \( S \subseteq A \) and \( R \subseteq B\cup \{ 1 \} \) with \( SR = S \), we have 
\[
|S|  \;\leq\; |B\setminus R|.
\]
\end{corollary}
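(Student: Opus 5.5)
The plan is to specialize Theorem~\ref{matching characterization} to the universal independence structure, as in the discussion preceding the statement. By Proposition~\ref{basic modular examples}(i), an abelian group $G$ with $\mathcal{I}=2^G$ is a modular abelian independence group. In this structure every subset is a flat, $\textup{cl}(X)=X$, and $\rho(X)=|X|$. We first set $n=|A|=|B|$. Then $A$ and $B$ are flats of rank $n$, and $n\geq 1$ because $A$ is nonempty, so Theorem~\ref{matching characterization} applies to the pair $(A,B)$.

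Next we translate each ingredient of that theorem.

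\begin{itemize}
\item[(1)] The range condition $R\subseteq \textup{cl}(B\cup\{1\})$ becomes simply $R\subseteq B\cup\{1\}$.
\item[(2)] ``Nonempty flat'' just means ``nonempty subset.''
\item[(3)] The rank inequality $\rho(S)+\rho(R\cap B)\leq n$ becomes $|S|+|R\cap B|\leq n$.
\end{itemize}

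For (3), since $B$ is finite with $|B|=n$, we have $|B|-|R\cap B|=|B\setminus R|$. So the inequality is equivalent to $|S|\leq |B\setminus R|$.

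Finally, matchedness itself needs no reinterpretation. By Example~\ref{basic group example}, matchedness in the sense of Definition~\ref{matching def} agrees with the classical notion for the universal structure, since each finite set is its own unique basis.

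Substituting these translations into Theorem~\ref{matching characterization} gives exactly the statement. I expect no real obstacle here. The only point needing care is confirming that every hypothesis of Theorem~\ref{matching characterization} (flatness, finite rank, $n$ natural) is met, and that the quantification over pairs of flats matches the quantification over pairs of subsets. Both checks are immediate because every subset is a flat.
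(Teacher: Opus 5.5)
Your proposal is correct and follows the same route as the paper. You specialize Theorem~\ref{matching characterization} to the universal structure, where every subset is a flat with rank equal to its cardinality and $\textup{cl}(B\cup\{1\})=B\cup\{1\}$, and then use $|B|=n$ to rewrite $|S|+|R\cap B|\le n$ as $|S|\le |B\setminus R|$.
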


Now suppose that \( G \) is an abelian independence group of the type discussed in Example~\ref{linear pi-group}.  Thus \( G = L^{\times} \), where \( L \) is an extension of some field \( K \), and a subset of \( G \) is independent if and only if it is linearly independent over \( K \). It was shown in Proposition~\ref{basic modular examples} that \( G \) is modular and its flats are all of the form \( U\setminus \{ 0 \} \), where \( U \) is a $K$-subspace of \( L \). In addition, we have \( \rho\big(U\setminus \{ 0 \}\big) = \dim_K U \), and the closure of any subset \( X \) of \( G \) is \( (\textup{span}_K X)\setminus \{ 0 \} \).  

Assume here that \( K \neq L \).  Let \( A,B\) be $n$-dimensional $K$-subspaces of \( L \) with \( 0 < n < \infty \).  Suppose that \(S \subseteq A \) and \( R \subseteq B+K \) are nonzero $K$-subspaces. Set \( \widetilde{A} = A \setminus \{ 0 \} \), and likewise for \( B \), \( S \), and \( R \).  Then the condition \( \widetilde{S}\widetilde{R} = \widetilde{S} \) in \( G \) is equivalent to \( SR = S \) in \( L \).  The inequality 
\[
\rho(\widetilde{S}) + \rho(\widetilde{R} \cap \widetilde{B}) \,\leq\, n 
\]
is equivalent to 
\[
\dim_K S + \dim_K (R \cap B) \,\leq\, n, 
\]
which in turn can be rewritten as 
\[
\dim_K S \,\leq\, \dim_K \big(B/(R \cap B)\big). 
\]
Note also that if \( 1 \notin B \), then the inclusion \( \widetilde {R} \subseteq \textup{cl}\big(\widetilde{B} \cup \{ 1 \}\big) \) is equivalent to \( R \subseteq B \oplus K \). 

In view of the above and Corollary~\ref{reconciliation corollary}, we have the following corollary of Theorem~\ref{matching characterization}. This result is a reformulation of Theorem 3.1 in \cite{Aliabadi 5}.

\begin{corollary}  \label{recover linear char}
Let \( K \subsetneq L \) be a field extension and let \( A,B \) be $n$-dimensional $K$-subspaces of \( L \), with \( 0 < n < \infty \) and \( 1 \notin B \). Then \( A \) is matched to \( B \) in the sense of Remark~\ref{EL matching remark} if and only if, for every pair of nonzero $K$-subspaces \( S \subseteq A \) and \( R \subseteq B \oplus K \) with \( SR = S \), we have 
\[
\dim_K S \,\leq\, \dim_K \big(B/(R \cap B)\big). 
\] 
\end{corollary}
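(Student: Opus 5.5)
The plan is to specialize Theorem~\ref{matching characterization} to the independence group \( G = L^\times \) of Example~\ref{linear pi-group} and translate every object in it through Proposition~\ref{basic modular examples}. Corollary~\ref{reconciliation corollary} says that \( A \) is matched to \( B \) in the sense of Remark~\ref{EL matching remark} if and only if \( \widetilde{A} \) is matched to \( \widetilde{B} \) in \( G \). So it suffices to show that the criterion of Theorem~\ref{matching characterization} for the flats \( \widetilde{A}, \widetilde{B} \) (both of rank \( n \), and modularity holding by Proposition~\ref{basic modular examples}) is equivalent to the subspace criterion in the statement.

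\emph{Setting up the correspondence.} Nonempty flats of \( G \) are exactly the sets \( \widetilde{U} = U\setminus\{0\} \) with \( U \) a nonzero \( K \)-subspace. For such flats:
\begin{itemize}
\item[(a)] \( \widetilde{S} \subseteq \widetilde{A} \) if and only if \( S \subseteq A \).
\item[(b)] \( \textup{cl}(\widetilde{B}\cup\{1\}) = \widetilde{B+K} \), and since \( 1\notin B \) the sum is direct, so \( \widetilde{R}\subseteq \textup{cl}(\widetilde{B}\cup\{1\}) \) if and only if \( R\subseteq B\oplus K \).
\item[(c)] \( \widetilde{R}\cap\widetilde{B} = \widetilde{R\cap B} \), with rank \( \dim_K(R\cap B) \).
\end{itemize}
Once the product condition (d) below is settled, the rank inequality \( \rho(\widetilde S)+\rho(\widetilde R\cap \widetilde B)\le n \) becomes \( \dim_K S + \dim_K(R\cap B)\le n = \dim_K B \). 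That is \( \dim_K S \le \dim_K B - \dim_K(R\cap B) = \dim_K\big(B/(R\cap B)\big) \).

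\emph{The main step.} The one point needing care is
\begin{itemize}
\item[(d)] \( \widetilde S\widetilde R = \widetilde S \) in \( G \) if and only if \( SR = S \) in \( L \).
\end{itemize}
Here \( SR \) is read as the set of products, or equivalently its \( K \)-span, since \( S \) is a subspace. For the forward direction, assume \( \widetilde S\widetilde R=\widetilde S \). Then every product \( sr \) with \( s\in S \), \( r\in R \) lies in \( S \): it is \( 0 \) if either factor is \( 0 \), and in \( \widetilde S \) otherwise. Hence \( SR\subseteq S \). For the reverse inclusion, the main risk is that \( S \subseteq SR \) needs some \( r \) with \( sr = s \) hitting everything, and a priori \( 1 \) need not lie in \( R \). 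The way around this is the finite-rank argument already used in the proof of Proposition~\ref{projective submonoids}. Fix \( r\in\widetilde R \). Multiplication by \( r \) is an injective \( K \)-linear map \( S\to S \), and \( S \) is finite-dimensional because \( S\subseteq A \). So the map is surjective, every \( s \) has the form \( s'r \), and \( S = Sr \subseteq SR \).

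Conversely, assume \( SR = S \). Then products of nonzero elements are nonzero and lie in \( S \), so \( \widetilde S\widetilde R\subseteq\widetilde S \). Surjectivity of multiplication by a fixed \( r \in \widetilde R \) gives \( \widetilde S = \widetilde S r \subseteq \widetilde S\widetilde R \), which is the reverse inclusion.

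\emph{Conclusion.} With (a)--(d), the pairs \( (\widetilde S,\widetilde R) \) quantified over in Theorem~\ref{matching characterization} correspond exactly to the pairs \( (S,R) \) of nonzero subspaces in the statement, and the inequalities correspond. Combining this with Corollary~\ref{reconciliation corollary} gives the result. The only nonroutine point is the surjectivity argument in (d), which is also the only place where \( n<\infty \) is used beyond the hypotheses of Theorem~\ref{matching characterization}.
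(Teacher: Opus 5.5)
Your proposal is correct and follows the paper's own route: specialize Theorem~\ref{matching characterization} to $L^\times$, translate flats, closures, intersections and ranks via Proposition~\ref{basic modular examples}, and pass to the Eliahou--Lecouvey notion with Corollary~\ref{reconciliation corollary}. The paper only asserts your step (d), which you justify. Your surjectivity argument there is Lemma~\ref{SR=S lemma} in this setting, and it is only genuinely needed when $SR$ is read as a $K$-span: if $\widetilde S\widetilde R=\widetilde S$, each element of $\widetilde S$ is already a product, so $S\subseteq SR$ is immediate.
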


We make use of the following lemma in the proof of Theorem~\ref{simpler characterization} below, and we will need it again in Section~\ref{self}.

\begin{lemma} \label{SR=S lemma}
Let \( G \) be an abelian independence group and let \( S, R \) be nonempty subsets of \( G \).  Assume that \( S \) is a finite-rank flat and \( SR \subseteq S \).  Then \( SR = S \).
\end{lemma}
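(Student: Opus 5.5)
The natural approach is to show that multiplication by a single element $r\in R$ maps $S$ onto $S$. Then $S = rS \subseteq SR$, and together with the hypothesis $SR\subseteq S$ this gives equality. Fix $r \in R$. By hypothesis, $rS \subseteq SR \subseteq S$ (using commutativity, $Sr = rS$). By Proposition~\ref{preservation of flatness and rank}, $rS$ is a flat and $\rho(rS) = \rho(S)$, which is finite. So the whole problem reduces to the following claim: if $F \subseteq S$ are flats of the same finite rank, then $F = S$.

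To prove the claim, I would argue by contradiction using Proposition~\ref{flat properties}(ii). Suppose $x \in S \setminus F$. Since $F$ is a finite-rank flat and $x \notin F$, we get $\rho(F \cup \{x\}) = \rho(F) + 1$. But $F \cup \{x\} \subseteq S$, so monotonicity (Proposition~\ref{rank properties}(i)) gives $\rho(F)+1 \le \rho(S) = \rho(F)$. This is a contradiction because the ranks are finite. Note that only the flatness of the smaller set $F = rS$ is really used here, and it is inherited from $S$ by translation.

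With $F = rS$, the claim gives $rS = S$. Since $R$ is nonempty, picking any $r\in R$ yields $S = rS \subseteq SR$, hence $SR = S$. The only point needing care is to confirm that $rS$ is a flat. This is exactly Proposition~\ref{preservation of flatness and rank}(ii), so I expect no real obstacle; the finiteness of $\rho(S)$ is essential, since the rank comparison fails for infinite cardinals. Commutativity is used only to identify $Sr$ with $rS$ as a left translate. Without it, one would need flatness of right translates, which the definition of independence group does not supply.
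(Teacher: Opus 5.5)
Your proposal is correct and follows essentially the same route as the paper. Both proofs use commutativity and Proposition~\ref{preservation of flatness and rank} to see that the translate of $S$ by an element of $R$ is a flat of the same finite rank contained in $S$, and then invoke Proposition~\ref{flat properties}(ii) to force equality. The only cosmetic difference is at the end: the paper shows $Sx = S$ for every $x \in R$ and takes the union, whereas you use a single $r$ to get $S \subseteq SR$ and combine this with the hypothesis $SR \subseteq S$.
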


\begin{proof}
Let \( x \in R \).  By commutativity and Proposition~\ref{preservation of flatness and rank}, the set \( Sx \) is a flat with \( \rho(Sx) = \rho(S) \). By hypothesis, \( Sx \subseteq S \). Therefore, using finiteness of rank and Proposition~\ref{flat properties}, we obtain \( Sx = S \).  It follows that
\[
SR = \bigcup_{r \in R}Sr = \bigcup_{r\in R}S = S.
\]
\end{proof}

Below we present an alternative version of Theorem~\ref{matching characterization}. Here the condition involving \( S \) and \( R \) no longer involves the group identity element.  Note, however, that for some purposes it is convenient to work with Theorem~\ref{matching characterization}, e.g., when retrieving some previously known results.

\begin{theorem}  \label{simpler characterization}
Let \( G \) be a modular abelian independence group and let \( n \) be a natural number.  Suppose  \( A,B \) are flats of \( G \), both of rank \( n \). Then \( A \) is matched to \( B \) if and only if, for every pair of nonempty flats \( S \subseteq A \) and \( R \subseteq B \) with \( SR = S \), we have 
\[
\rho(S) + \rho(R) \,\leq\, n.
\]
\end{theorem}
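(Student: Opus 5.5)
We will derive this from Theorem~\ref{matching characterization} by showing that the two families of conditions are equivalent.

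\emph{Condition of Theorem~\ref{matching characterization} implies the new condition.} Let \(S \subseteq A\) and \(R \subseteq B\) be nonempty flats with \(SR = S\). Since \(B \subseteq \textup{cl}(B \cup \{1\})\), the pair \((S,R)\) is admissible in Theorem~\ref{matching characterization}. Because \(R \cap B = R\), that theorem gives \(\rho(S) + \rho(R) \leq n\) directly.

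\emph{New condition implies the condition of Theorem~\ref{matching characterization}.} Let \(S \subseteq A\) and \(R \subseteq \textup{cl}(B \cup \{1\})\) be nonempty flats with \(SR = S\). Put \(R' = R \cap B\), which is a flat by Proposition~\ref{flat properties}.

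If \(R'\) is empty, then \(\rho(R \cap B) = 0\). Since \(S \subseteq A\), we get \(\rho(S) \leq n\), and the inequality holds.

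If \(R'\) is nonempty, we have \(SR' \subseteq SR = S\). Here \(S\) is a finite-rank flat, because \(\rho(S) \leq \rho(A) = n\). Lemma~\ref{SR=S lemma} then gives \(SR' = S\). So \((S,R')\) is an admissible pair for the new condition, and it yields
\[
\rho(S) + \rho(R \cap B) \leq n.
\]

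With both implications in hand, Theorem~\ref{matching characterization} completes the proof. The only point that needs care is the converse direction, where we must verify the hypotheses of Lemma~\ref{SR=S lemma}: nonemptiness of \(R'\), and \(S\) being a finite-rank flat. The empty case has to be handled separately as above. Everything else is a direct translation between the two conditions.
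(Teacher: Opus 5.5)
Your proposal is correct and matches the paper's proof: both directions reduce to Theorem~\ref{matching characterization}, and the nontrivial direction replaces \(R\) by \(R\cap B\) and applies Lemma~\ref{SR=S lemma}. The only difference is cosmetic. The paper argues by contrapositive, noting that a violating pair forces \(R\cap B\neq\emptyset\), whereas you handle the empty case directly.
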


\begin{proof}
First observe that if the condition involving \( S \) and \( R \) in Theorem~\ref{matching characterization} holds, then so does the one in the statement above. 

Suppose that the condition in Theorem~\ref{matching characterization} does not hold, and let \( S\subseteq A \) and \( R \subseteq \textup{cl}(B\cup\{1\})\) be nonempty flats such that \( SR = S \) and 
\begin{equation} \label{from other theorem}
\rho(S) + \rho(R\cap B) \,>\, n.
\end{equation}
Notice that, in order for the above inequality to hold, the flat \( R \cap B \) must be nonempty. Denote this flat by \( R' \).  Clearly \( R'\subseteq B \) and 
\[ 
SR' \,\subseteq\, SR \,=\, S. 
\]  
In fact, the last inclusion and Lemma~\ref{SR=S lemma} give us \( SR' = S \).

Thus, on account of inequality~(\ref{from other theorem}), the pair \( (S,R') \) violates the condition in the statement.
\end{proof}

\section{Self-matching, Submonoid Thresholds, and Product Growth} \label{self}

In this section we explore some consequences of Theorems~\ref{Dyson G} and \ref{matching characterization}. Our main results include a theorem on self-matching (Theorem~\ref{self matching criterion}), a theorem identifying the modular abelian independence groups having a certain global matching property (Theorem~\ref{MMP}), and a lower bound for the rank of \( XY \), where \( X \) and \( Y \) are nonempty finite-rank sets (Theorem~\ref{threshold product growth}). The lower bound involves a parameter \( \mu(G) \) defined in this section, called the submonoid-rank threshold of \( G \).

We also prove a result on the matchability of flats having rank less than \( \mu(G) \) (Corollary~\ref{low rank matching}) and another on the existence of unmatchable pairs with rank equaling \( \mu(G) \) (Proposition~\ref{sharp low rank matching}). Examples from Section~\ref{ModSec} are revisited in the context of the concepts and theorems presented in this section.

\begin{definition} \label{self-match def}
We say that a finite-rank subset \(A\) of an independence group is \emph{self-matched} if the pair \( (A,A) \) is matched.
\end{definition}

\begin{theorem}[Self-matching criterion]
\label{self matching criterion}
Let \( G \) be a modular abelian independence group, and let \(A\) be a flat of finite rank. Then \(A\) is self-matched if and only if \( 1\notin A \).
\end{theorem}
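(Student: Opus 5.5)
The plan is to dispose of the forward direction with Proposition~\ref{necessary condition} and to get the converse from Theorem~\ref{simpler characterization} applied with \(B = A\).

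If \(\rho(A)=0\), the pair \((A,A)\) is vacuously matched. In that case \(1\notin A\) automatically: otherwise \(1\) would be a loop, hence every element of \(G\) would be a loop, and then \(A = \textup{cl}(\emptyset) = G\)... but the statement is then about a rank-zero flat, and I will check this degenerate case separately rather than rely on it. So assume \(n=\rho(A)>0\). If \(1\in A\), Proposition~\ref{necessary condition} shows directly that \((A,A)\) is not matched.

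Conversely, suppose \(1\notin A\), and suppose for contradiction that \(A\) is not self-matched. By Theorem~\ref{simpler characterization} with \(B=A\), there are nonempty flats \(S,R\subseteq A\) with \(SR=S\) and
\[
\rho(S)+\rho(R)>n .
\]
Both \(S\) and \(R\) are flats inside the flat \(A\), so modularity gives
\[
\rho(S\cap R)=\rho(S)+\rho(R)-\rho(S\cup R).
\]
Since \(S\cup R\subseteq A\), monotonicity gives \(\rho(S\cup R)\le n\), and therefore \(\rho(S\cap R)>0\). In particular \(S\cap R\neq\emptyset\).

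Now pick \(r\in S\cap R\). From \(SR=S\) and commutativity, \(S\) is closed under multiplication by every element of \(R\). Hence all powers \(r^k\) with \(k\ge 1\) lie in \(S\subseteq A\).

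The key step is to produce \(1\) inside \(A\). By Lemma~\ref{SR=S lemma} together with Proposition~\ref{flat properties}, multiplication by \(r\) maps the finite-rank flat \(S\) onto itself: \(Sr\) is a flat of the same finite rank contained in \(S\), so \(Sr=S\). Thus there is \(s\in S\) with \(sr=r\), and cancelling \(r\) gives \(s=1\). So \(1\in S\subseteq A\), a contradiction.

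The main obstacle is exactly this step of extracting the identity. The equality \(Sr=S\) requires finite rank and flatness, and these are precisely the hypotheses of Lemma~\ref{SR=S lemma}. The crucial observation is that \(r\in S\) lets us solve \(sr=r\) inside \(S\).
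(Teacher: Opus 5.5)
Your argument is essentially the paper's. The forward direction uses Proposition~\ref{necessary condition}. The converse applies Theorem~\ref{simpler characterization} with \(B=A\) and observes that any \(r\in S\cap R\) forces \(Sr=S\), hence \(sr=r\) and \(1\in S\subseteq A\). The only difference is order: the paper shows \(S\cap R=\emptyset\) first and then applies modularity, while you run the same modularity computation by contradiction.

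One correction to your rank-zero aside. If every element of \(G\) is a loop, then \(A=G\) is a rank-zero flat that contains \(1\) and is vacuously self-matched. So ``\(1\notin A\) automatically'' is false, and this case cannot be checked away: it is a genuine degenerate exception to the statement. The paper's proof also tacitly excludes it, since Proposition~\ref{necessary condition} requires positive rank. You should simply assume \(G\) has a non-loop element rather than promise a separate check.
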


\begin{proof}
If \( 1 \in A \), then Proposition~\ref{necessary condition} immediately implies that \( A \) is not self-matched.

Assume \(1\notin A\). The empty set is self-matched, so assume \( A \neq \emptyset \). Suppose \(S,R\subseteq A\) are nonempty flats satisfying \( SR=S \).
We will show that
\[
\rho(S)+\rho(R)\leq\rho(A).
\]

We claim that \( S \cap R=\emptyset \).  Assume the contrary, and let  \(x\in S\cap R\).  Then the equation \( SR=S \) gives us \( Sx\subseteq S \). From this inclusion we get \( Sx = S \), by finiteness of rank along with commutativity and Propositions~\ref{flat properties} and \ref{preservation of flatness and rank}. Now, \(x\in S=Sx\), so there exists \(s\in S\) such that \( sx=x \). It follows that \(s=1\), and hence \( 1\in S\subseteq A \),
contrary to the assumption \(1\notin A\). The claim is established.

Since \( G \) is modular,
\[
\rho(S)+\rho(R)
=
\rho(S\cup R)+\rho(S\cap R).
\]
As \( S\cap R=\emptyset \), we have \( \rho(S\cap R)=0 \). Furthermore, \( S\cup R\subseteq A \), so the monotonicity property of rank gives 
\[
\rho(S)+\rho(R) = \rho(S\cup R) \leq \rho(A).
\]
Theorem~\ref{simpler characterization}, applied with \(B=A\), now shows that \(A\) is self-matched.
\end{proof}

Let \( G \) be a group and let \(R \subseteq G \).  We write \( \langle R \rangle \) for the submonoid of \( G \) generated by \( R \).  This is the intersection of all of the submonoids of \( G \) that contain \( R \). Note that \( \langle R \rangle \) consists of the identity element of \( G \) together with all products of the form \( x_1 \cdots x_k \), where \( k \) is a natural number and each \( x _i \) belongs to \( R \).

In what follows, when we say that a submonoid \( M \) of an independence group \( G \) has finite rank, we mean that \( \rho(M) \) is finite.

\begin{lemma} \label{monoid lemma}
Let \( G \) be an independence group and let \(S, R\) be nonempty subsets of \( G \) with \( SR \subseteq S \). Then the following statements hold:
\begin{itemize}
\item[(i)]  We have \( S\langle R \rangle \subseteq S \). 
\item[(ii)]  Suppose \( S \) has finite rank, say \( \rho(S) = n \).  Then \( \rho\big(\langle R \rangle\big) \leq n\).
\end{itemize}
\end{lemma}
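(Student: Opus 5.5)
For (i), I will use the description of \( \langle R \rangle \) given just before the lemma: it consists of \(1\) together with all products \( x_1\cdots x_k \) with \( k\geq 1 \) and each \( x_i\in R \). I will show by induction on \(k\) that \( Sx_1\cdots x_k \subseteq S \) for every such product. The case \(k=0\) is \(S\cdot 1 = S\). For the inductive step, assume \( Sx_1\cdots x_{k-1}\subseteq S \). Then
\[
Sx_1\cdots x_k \subseteq Sx_k \subseteq SR \subseteq S .
\]
Taking the union over all elements of \( \langle R\rangle \) gives \( S\langle R\rangle\subseteq S \). No commutativity is needed here, since multiplication is always on the right.

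For (ii), I pick any \( s\in S \), which is possible because \(S\) is nonempty. By (i), \( s\langle R\rangle \subseteq S\langle R\rangle \subseteq S \). By Proposition~\ref{preservation of flatness and rank}(i), left translation preserves rank, so \( \rho(\langle R\rangle) = \rho(s\langle R\rangle) \). Monotonicity from Proposition~\ref{rank properties}(i) then gives
\[
\rho(\langle R\rangle) = \rho(s\langle R\rangle) \leq \rho(S) = n .
\]

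There is no real obstacle. The only points to check are that the translation is a left translation, which matches the independence group axiom, and that the nonemptiness of \(S\) is used to choose \(s\). Finiteness of \( \rho(S) \) is needed only so that the bound \(n\) is a finite number.
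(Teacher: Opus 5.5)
Your proof is correct and follows the paper's argument essentially verbatim. In part (i) you right-multiply by the factors \(x_1,\dots,x_k\) one at a time; your induction simply formalizes the paper's ``and so on.'' In part (ii) you choose \(s\in S\), note that \(s\langle R\rangle\subseteq S\), and then use left-translation invariance of rank together with monotonicity.
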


\begin{proof}
Given \( a \in S \) and \( x \in \langle R \rangle \), we clearly have \( ax \in S \) if \( x = 1 \).  Suppose \( x = x_1x_2 \cdots x_k \), where \( k > 0 \) and each \( x_i \in R \). Then \( ax_1 \in SR \subseteq S \), hence \(ax_1x_2 = (ax_1)x_2 \in SR \subseteq S \), and so on, eventually yielding \( ax \in S \).  Thus (i) holds.

For (ii), let \( a' \) be an arbitrary element of \( S \). Using (i) and Proposition~\ref{preservation of flatness and rank}, we obtain \( \rho\big(\langle R \rangle\big) = \rho\big(a'\langle R \rangle\big) \leq \rho(S) \leq n\).
\end{proof}

\begin{definition} \label{matching property def}
Let \( G \) be an independence group. We say that \( G \) has the \emph{matching property} if for every pair \( (A,B) \) of finite-rank flats of \( G \) such that \( \rho(A) = \rho(B)\) and \( 1 \notin B \), we have that \(A\) is matched to \( B \).
\end{definition}

Below, the matching property is characterized for modular abelian independence groups by the absence of a submonoid satisfying a pair of rank conditions.

\begin{theorem}[Submonoids and the matching property] \label{MMP}
Let \( G \) be a modular abelian independence group.  Then \( G \) has the matching property if and only if \( G \) has no submonoid \( H \) such that 
\[ 
1 < \rho(H) < \rho(G)  \qquad \text{and} \qquad  \rho(H) < \infty. 
\]
\end{theorem}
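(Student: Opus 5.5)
The proof handles the two directions separately. Both rely on Theorem~\ref{simpler characterization} and on Lemma~\ref{monoid lemma}.

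\emph{Absence of such submonoids implies the matching property.} Suppose $G$ has no submonoid $H$ with $1<\rho(H)<\rho(G)$ and $\rho(H)<\infty$. Suppose also, for contradiction, that $A,B$ are finite-rank flats with $\rho(A)=\rho(B)=n$, $1\notin B$, and $A$ not matched to $B$. Then $n>0$, since pairs of rank $0$ are vacuously matched.
\begin{itemize}
\item By Theorem~\ref{simpler characterization} there are nonempty flats $S\subseteq A$ and $R\subseteq B$ with $SR=S$ and $\rho(S)+\rho(R)>n$.
\item Put $H=\langle R\rangle$. By Lemma~\ref{monoid lemma}(ii), $\rho(H)\le\rho(S)\le n<\infty$.
\item \emph{Lower bound.} Since $1\notin B$ and $B$ is a flat, $1\notin \textup{cl}(R)\subseteq B$. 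As $R$ is nonempty, $\rho(R)\geq 1$, because in an independence group containing a non-loop no element is a loop. Proposition~\ref{flat properties}(ii) applied to the flat $\textup{cl}(R)$ gives $\rho(R\cup\{1\})=\rho(R)+1\ge 2$. Since $R\cup\{1\}\subseteq H$, we get $\rho(H)\ge 2$.
\item \emph{Upper bound.} Again by Proposition~\ref{flat properties}(ii), $\rho(G)\ge\rho(B\cup\{1\})=n+1>n\ge\rho(H)$.
\end{itemize}
So $H$ is a forbidden submonoid, a contradiction.

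\emph{Existence of such a submonoid breaks the matching property.} Let $H$ be a submonoid with $1<m=\rho(H)<\rho(G)$ and $m<\infty$.
\begin{itemize}
\item First I replace $H$ by $M=\textup{cl}(H)$. Applying Lemma~\ref{product and closure}(ii) twice, together with commutativity, gives
\[
MM\subseteq \textup{cl}(H\,\textup{cl}(H))\subseteq\textup{cl}(\textup{cl}(HH))=\textup{cl}(H)=M .
\]
Also $1\in M$, so $M$ is a flat submonoid of rank $m$. Since $\rho(M)<\rho(G)$, we have $M\neq G$.
\item Extend $\{1\}$ to a basis $\{1,b_2,\dots,b_m\}$ of $M$; here $m\ge2$. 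Pick $x\in G\setminus M$.
\item Set $A=M$ and $B=\textup{cl}(\{b_2,\dots,b_m,x\})$. Because $x\notin \textup{cl}(\{1,b_2,\dots,b_m\})=M$, the set $\{b_2,\dots,b_m,x\}$ is independent, so $\rho(B)=m$.
\item \emph{Key point: $1\notin B$.} If $1\in B$, then the independent set $\{1,b_2,\dots,b_m\}$ lies in the rank-$m$ flat $B$. It would then be a basis of $B$, forcing $x\in\textup{cl}(\{1,b_2,\dots,b_m\})=M$, which is a contradiction.
\item Now take $S=M$ and $R=B\cap M$. This $R$ is a nonempty flat, since it contains $b_2$. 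We have $MR\subseteq MM\subseteq M$, so Lemma~\ref{SR=S lemma} yields $SR=S$.
\item Then $\rho(S)+\rho(R)\ge m+1>m$. By Theorem~\ref{simpler characterization}, $A$ is not matched to $B$, so $G$ lacks the matching property.
\end{itemize}

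The only delicate steps are building $B$ so that it meets $M$ while avoiding $1$, and checking that $\textup{cl}(H)$ is still a submonoid. Both reduce to the exchange and closure facts established earlier in the paper.
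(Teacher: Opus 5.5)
Your proof is correct and follows essentially the paper's argument: one direction uses the same bound $\rho(\langle R\rangle)\le\rho(S)\le n<n+1\le\rho(G)$, and the other uses the same unmatched pair $A=\textup{cl}(H)$, $B=\textup{cl}(\{x,b_2,\dots,b_m\})$ with $S=A$. The differences are cosmetic: you invoke Theorem~\ref{simpler characterization} ($R\subseteq B$) rather than Theorem~\ref{matching characterization}, which gives $\rho(\langle R\rangle)\ge\rho(R\cup\{1\})=\rho(R)+1\ge 2$ directly instead of deducing $R\cap B=\emptyset$ from $\rho(\langle R\rangle)=1$, and you prove inline that $\textup{cl}(H)$ is a flat submonoid (the paper's Lemma~\ref{flat submonoid closure}), which makes $MR\subseteq M$ immediate.
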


\begin{proof}
Assume that \( G \) has no submonoid \( H \) satisfying the above rank conditions.  Let \( A,B \) be finite-rank flats in \( G \) with \( \rho(A) = \rho(B) \) and \( 1 \notin B \). Let \( n \) be the common rank of \( A \) and \( B \).  If \( n =0 \), then \( A \) is (vacuously) matched to \( B \), so assume \( n > 0 \).  We will use Theorem~\ref{matching characterization} to show that \( A \) is matched to \( B \).

Before proceeding, note that \( G \) has no loops; otherwise, every element of \( G \) would be a loop and then both \( A \) and \( B \) would have rank zero, contrary to assumption. 

Suppose we are given nonempty flats \( S \subseteq A \) and \(R \subseteq \textup{cl}\big(B\cup \{ 1 \}\big) \) with \( SR = S \).  Consider the submonoid \( \langle R \rangle \) of \( G \).  According to Lemma~\ref{monoid lemma}, the rank of this submonoid is no greater than \( n \). Observe also that \( n < \rho(G) \); indeed, since \( B \) is a flat and \( 1 \notin B \), we have
\[ \rho(G) \geq \rho(B \cup \{ 1 \}) = \rho(B) + 1 = n+1,
\] 
using Proposition~\ref{flat properties}.
By our hypothesis, then, we must have \( \rho\big(\langle R \rangle\big) \leq 1 \). We cannot have \( \rho\big(\langle R \rangle\big) = 0 \), since otherwise \( 1 \) would be a loop.  Thus \(\rho\big(\langle R \rangle\big) = 1 \). 

We claim that \( R \cap B = \emptyset \).  Assume the contrary and let \( x \in R\cap B \).  Then \( x, 1 \in \langle R \rangle \), and \( \{ x \} \) is a basis of \( \langle R \rangle \) (because \( G \) has no loops).  Hence \( 1~\mid~\{ x \} \).  We also have \( x \in B \), so that \( \{ x \} \mid B \).  It now follows from Proposition~\ref{dependence} that \( 1 \mid B \). Since \( B \) is a flat, this gives us \( 1 \in B \), a contradiction.

Using the claim, we now compute
\[
\rho(S) + \rho(R \cap B ) \,=\, \rho(S) + 0 \,\leq\, \rho(A) \,=\, n.
\]
Applying Theorem~\ref{matching characterization}, we conclude that \( A \) is matched to \( B \).

For the converse, assume that \( G \) has a submonoid \( H \) such that 
\[ 
1 < \rho(H) < \rho(G)  \qquad \text{and} \qquad  \rho(H) < \infty. 
\]
Let \( m = \rho(H) \).  Since \( m > 0 \), there are no loops in \( G \).  Extend \( \{ 1 \} \) to a basis \( \{1,b_1,\ldots,b_{m-1}\} \) of \( H \) and note that this is also a basis of \( \textup{cl}(H) \).  Choose \( x \in G \setminus \textup{cl}(H) \).  Then the set \( \{x,1,b_1,\ldots,b_{m-1} \} \) is independent.  Define
\[
A = \textup{cl}(H) \qquad \text{and} \qquad B = \textup{cl}\big(\{x,b_1,\ldots ,b_{m-1} \}\big).
\]
We have \( \rho(A) = \rho(B) = m \) and \( 1 \notin B \). We will use Theorem~\ref{matching characterization} to show that \( A \) is not matched to \( B \).  Take \( S = A \) and \( R = \textup{cl}\big(\{b_1,\ldots,b_{m-1} \}\big) \).  Using commutativity, two applications of Lemma~\ref{product and closure}, and the fact that \( H \) is a monoid, we obtain
\begin{align*}
SR \,=\, A\Big(\textup{cl}\big(\{b_1,\ldots,b_{m-1}\}\big)\Big)  &\,\subseteq\, \textup{cl}\big(A\{b_1,\ldots,b_{m-1}\}\big) \\
&\,=\, \textup{cl}\big(\textup{cl}(H)\{b_1,\ldots,b_{m-1}\}\big)  \\  
&\,\subseteq\,  \textup{cl}\Big(\textup{cl}\big(H\{b_1,\ldots,b_{m-1}\}\big)\Big) \\
&\,\subseteq\, \textup{cl}(H) = A = S.
\end{align*}
We have shown that \( SR \subseteq S \), and this in fact gives us \( SR = S\), by Lemma~\ref{SR=S lemma}.  Finally, observe that 
\[
\rho(S) + \rho(R\cap B) \,=\, \rho(A) + \rho(R) \,=\, m + (m - 1) \,>\, m.
\]
Applying Theorem~\ref{matching characterization} completes the proof.
\end{proof}

Motivated by Theorem~\ref{MMP}, we make the following definition.

\begin{definition} \label{submonoid rank threshold}
Let \(G\) be an independence group. Define
\[
\mu(G) =
\min\Bigl(
\bigl\{
\rho(H): H \text{ is a submonoid of }G\text{ and }
1<\rho(H)<\infty
\bigr\}
\cup\{\infty\}
\Bigr).
\]
We call \(\mu(G)\) the \emph{submonoid-rank threshold} of \(G\).  
\end{definition}

Note that \(\mu(G)=\infty\) precisely when every finite-rank submonoid of \(G\) has rank at most one.

\begin{corollary}[Low rank matching]
\label{low rank matching}
Let \(G\) be a modular abelian independence group and let \(A,B\) be finite-rank flats of \( G \) satisfying
\[
\rho(A)=\rho(B)=n<\mu(G)
\qquad\text{and}\qquad
1\notin B.
\]
Then \(A\) is matched to \(B\).
\end{corollary}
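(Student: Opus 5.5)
The argument reruns the first half of the proof of Theorem~\ref{MMP}, with the global hypothesis on submonoids replaced by the threshold $\mu(G)$. If $n=0$ the pair is vacuously matched, so we assume $n>0$. Then $G$ has no loops, since otherwise every set would have rank zero. We aim to verify the criterion of Theorem~\ref{matching characterization}.

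Let $S\subseteq A$ and $R\subseteq \textup{cl}(B\cup\{1\})$ be nonempty flats with $SR=S$. We form the submonoid $\langle R\rangle$.
\begin{itemize}
\item By Lemma~\ref{monoid lemma}(ii), applied with $\rho(S)\le\rho(A)=n$, we get $\rho(\langle R\rangle)\le n<\mu(G)$.
\item In particular $\langle R\rangle$ has finite rank.
\item By Definition~\ref{submonoid rank threshold}, a finite-rank submonoid of rank greater than one would have rank at least $\mu(G)$. Hence $\rho(\langle R\rangle)\le 1$.
\item Since $1\in\langle R\rangle$ and $1$ is not a loop, in fact $\rho(\langle R\rangle)=1$.
\end{itemize}
This is the only place where the threshold enters. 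Note that we never need the condition $n<\rho(G)$ that appeared in Theorem~\ref{MMP}, because $\mu(G)$ already bounds the rank of $\langle R\rangle$ directly.

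Next we show that $R\cap B=\emptyset$, exactly as in the proof of Theorem~\ref{MMP}. Suppose $x\in R\cap B$. Then $\{x\}$ is a basis of $\langle R\rangle$, because $G$ has no loops. Hence $1\mid\{x\}$. Also $\{x\}\mid B$, so Proposition~\ref{dependence} gives $1\mid B$. Since $B$ is a flat, this forces $1\in B$, a contradiction.

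It follows that $\rho(S)+\rho(R\cap B)=\rho(S)\le\rho(A)=n$, and Theorem~\ref{matching characterization} yields that $A$ is matched to $B$. No step is a genuine obstacle. The only point needing care is confirming that $\langle R\rangle$ has finite rank, so that Definition~\ref{submonoid rank threshold} applies to it, and Lemma~\ref{monoid lemma}(ii) supplies exactly this.
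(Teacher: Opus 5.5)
Your proposal is correct and follows the paper's proof essentially step for step. You bound $\rho(\langle R\rangle)\le\rho(S)\le n<\mu(G)$ via Lemma~\ref{monoid lemma}, force $\rho(\langle R\rangle)=1$ using the absence of loops, deduce $R\cap B=\emptyset$ as in the proof of Theorem~\ref{MMP}, and conclude with Theorem~\ref{matching characterization}.
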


\begin{proof}
If \(n=0\), the assertion holds vacuously, so assume \(n>0\). We will apply Theorem~\ref{matching characterization}. Let \(S\subseteq A\) and \( R\subseteq\textup{cl}(B\cup\{1\}) \) be nonempty flats satisfying \( SR=S \). Set \( H=\langle R\rangle \). Since \(SR\subseteq S\), Lemma~\ref{monoid lemma} gives
\[
\rho(H)\leq\rho(S)\leq n<\mu(G).
\]
Note that \( G \) has no loops because \( n > 0 \). Since \(1\in H\), it follows that \(\rho(H)\geq1\). The definition of \(\mu(G)\) now forces \( \rho(H)=1 \).

Observe that \(R\cap B=\emptyset\); otherwise, we can argue as in the proof of Theorem~\ref{MMP} to obtain \( 1 \in B \), which is a contradiction.  It follows that
\[
\rho(S)+\rho(R\cap B)
=
\rho(S)
\leq
\rho(A)
=
n.
\]
Theorem~\ref{matching characterization} now shows that \(A\) is matched to \(B\).
\end{proof}

The following lemma will be used in the proofs of the next three results.

\begin{lemma} \label{flat submonoid closure}
Let \(G\) be an abelian independence group and let \(H \) be a submonoid of \(G\). Then \( M = \textup{cl}(H) \) is a flat submonoid of \(G\) with \( \rho(M) = \rho(H) \).
\end{lemma}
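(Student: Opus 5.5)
The plan has three parts: show that \(M\) is a flat, that it has the same rank as \(H\), and that it is a submonoid. The first two come directly from Proposition~\ref{closure properties}. Part (iii) gives \(\textup{cl}(M)=\textup{cl}(\textup{cl}(H))=\textup{cl}(H)=M\), so \(M\) is a flat. Part (iv) gives \(\rho(M)=\rho(\textup{cl}(H))=\rho(H)\). So the real content is that \(M\) is closed under multiplication and contains the identity.

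The identity is immediate: \(1\in H\subseteq \textup{cl}(H)=M\) by Proposition~\ref{closure properties}(i). For closure under products, I will apply Lemma~\ref{product and closure}(ii) twice, using commutativity to move the closure operator from one factor to the other.

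First take \(A=H\), \(B=H\) in Lemma~\ref{product and closure}(ii). This gives \(H\,\textup{cl}(H)\subseteq \textup{cl}(HH)\). Since \(H\) is a submonoid, \(HH=H\), so \(H\,\textup{cl}(H)\subseteq \textup{cl}(H)=M\). Next, by commutativity, \(M\,M=\textup{cl}(H)\,\textup{cl}(H)=M\,\textup{cl}(H)\). Applying Lemma~\ref{product and closure}(ii) with \(A=M\) and \(B=H\) gives
\[
M\,\textup{cl}(H)\subseteq \textup{cl}(MH)=\textup{cl}(HM)\subseteq \textup{cl}(M)=M.
\]
The equality in the middle again uses commutativity, the last inclusion uses the first step (\(HM\subseteq M\)) together with Proposition~\ref{closure properties}(ii), and the final equality uses that \(M\) is a flat. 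Hence \(MM\subseteq M\), and \(M\) is a flat submonoid.

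The only delicate point is this two-step use of Lemma~\ref{product and closure}(ii). That lemma only lets the closure sit on the right-hand factor, so commutativity is what allows us to take the closure of both factors. The rest is routine bookkeeping with Proposition~\ref{closure properties}; in particular, no finiteness of rank is needed.
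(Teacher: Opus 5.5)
Your proof is correct and matches the paper's argument essentially step for step. The paper also obtains \(HM\subseteq M\) from Lemma~\ref{product and closure} using \(HH=H\), uses commutativity to get \(MH\subseteq M\), and applies the lemma again to get \(MM\subseteq\textup{cl}(MH)\subseteq M\); flatness and \(\rho(M)=\rho(H)\) come from Proposition~\ref{closure properties}. One small correction: the equality \(MM=M\,\textup{cl}(H)\), which you attribute to commutativity, is just the definition of \(M\). Commutativity is needed only for \(MH=HM\).
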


\begin{proof}
We have \(1\in M\) since \(1\in H \) and \( H \subseteq M\).  By Lemma~\ref{product and closure} and the equation \(HH = H\),
\[
HM \,=\, H\big(\textup{cl}(H)\big) \,\subseteq\,
\textup{cl}(HH) \,=\, \textup{cl}(H) \,=\, M.
\]
Since \(G\) is abelian, this also gives \(MH\subseteq M\). Applying Lemma~\ref{product and closure} once more, we obtain
\[
MM \,=\,
M\big(\textup{cl}(H)\big)
\,\subseteq\,
\textup{cl}(MH)
\,\subseteq\,
\textup{cl}(M)
\,=\, M.
\]
Thus \(M\) is a submonoid. It is clearly a flat.  Finally, Proposition~\ref{closure properties} gives
\[
\rho(M)=\rho\bigl(\textup{cl}(H)\bigr)=\rho(H).
\]
\end{proof}

The proposition below concerns the existence of unmatched pairs \( (A,B) \).

\begin{proposition}[Sharpness at the threshold] \label{sharp low rank matching}
Let \(G\) be a modular abelian independence group, and suppose that 
\[
1< n=\mu(G)<\rho(G),
\]
where \( n \) is a natural number. Then there exist finite-rank flats \(A,B\) of \( G \) such that
\[
\rho(A)=\rho(B)=n,\qquad 1\notin B,
\]
and \(A\) is not matched to \(B\).
\end{proposition}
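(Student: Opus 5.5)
The plan is to reuse the construction from the converse direction of Theorem~\ref{MMP}. Since \(\mu(G)=n\) is finite, Definition~\ref{submonoid rank threshold} supplies a submonoid \(H\) of \(G\) with \(\rho(H)=n\) and \(1<n<\infty\). By hypothesis \(n<\rho(G)\), so \(H\) satisfies exactly the rank conditions \(1<\rho(H)<\rho(G)\) and \(\rho(H)<\infty\) used there.

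Next we pass to the flat submonoid \(M=\textup{cl}(H)\). By Lemma~\ref{flat submonoid closure}, \(M\) is a flat submonoid with \(\rho(M)=n\). Since \(n>0\), \(G\) has no loops. We extend \(\{1\}\) to a basis \(\{1,b_1,\ldots,b_{n-1}\}\) of \(M\). Because \(\rho(M)=n<\rho(G)\), there is some \(x\in G\setminus M\). As \(M\) is a flat, Proposition~\ref{flat properties}(ii) shows that \(\{x,1,b_1,\ldots,b_{n-1}\}\) is independent. We then set
\[
A=M,\qquad B=\textup{cl}\bigl(\{x,b_1,\ldots,b_{n-1}\}\bigr).
\]
Both are flats of rank \(n\). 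Moreover \(1\notin B\): otherwise \(1\) would depend on the independent set \(\{x,b_1,\ldots,b_{n-1}\}\), contradicting the independence of \(\{x,1,b_1,\ldots,b_{n-1}\}\).

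To show \(A\) is not matched to \(B\), we take \(S=A\) and \(R=\textup{cl}(\{b_1,\ldots,b_{n-1}\})\). This \(R\) is nonempty because \(n\ge 2\). Applying Lemma~\ref{product and closure} twice together with commutativity gives \(SR\subseteq \textup{cl}(M\{b_1,\ldots,b_{n-1}\})\subseteq \textup{cl}(M)=M=S\), using that \(M\) is a submonoid. Lemma~\ref{SR=S lemma} then upgrades this to \(SR=S\). Also \(R\subseteq B\), so \(R\cap B=R\) and
\[
\rho(S)+\rho(R\cap B)=n+(n-1)>n.
\]
Since \(R\subseteq B\subseteq\textup{cl}(B\cup\{1\})\), Theorem~\ref{matching characterization} (or directly Theorem~\ref{simpler characterization}) shows that \(A\) is not matched to \(B\).

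The only step needing care is the independence claim for \(\{x,1,b_1,\ldots,b_{n-1}\}\) and the resulting \(1\notin B\). Both follow from Proposition~\ref{flat properties}(ii) applied to the flat \(M\), together with the fact that the dependence relation \(\mid\) respects independent sets. Everything else is a direct transcription of the argument in Theorem~\ref{MMP}.
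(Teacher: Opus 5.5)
Your proof is correct and matches the paper's argument essentially step for step: pass to the flat submonoid $A=\textup{cl}(H)$, set $B=\textup{cl}(\{x,b_1,\ldots,b_{n-1}\})$ and $R=\textup{cl}(\{b_1,\ldots,b_{n-1}\})$, and show the pair violates the criterion of Theorem~\ref{simpler characterization}. The only cosmetic difference is how $AR\subseteq A$ is obtained: the paper gets it directly from $R\subseteq A$ and $AA=A$, whereas you go through Lemma~\ref{product and closure}.
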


\begin{proof}
Since \(n=\mu(G)\), there exists a submonoid \(H\) of \(G\) with \( \rho(H)=n \). Let \( A = \textup{cl}(H) \).  By Lemma~\ref{flat submonoid closure}, \( A \) is a flat submonoid of \( G \) such that \( \rho(A)=n \).

Note that \( G \) has no loops, since \( n > 0 \).  Extend \(\{1\}\) to a basis \( \{1,b_1,\ldots,b_{n-1}\} \) of \(A\). Since \(n<\rho(G)\), we can choose \(x\in G\setminus A\). Define
\[
B=\operatorname{cl}\bigl(\{x,b_1,\ldots,b_{n-1}\}\bigr).
\]
The set \( \{1,x,b_1,\ldots,b_{n-1}\} \) is independent and so \( \rho(B)=n \) and \( 1\notin B\).

Now let
\[
R=\operatorname{cl}\bigl(\{b_1,\ldots,b_{n-1}\}\bigr).
\]
Then \(R\subseteq B\) and \(\rho(R)=n-1\). Since \(A\) is a flat
submonoid,
\[
AR \,\subseteq\, AA \,=\, A.
\]
Lemma~\ref{SR=S lemma} now gives \( AR=A \). Finally,
\[
\rho(A)+\rho(R) \,=\, n+(n-1) \,>\, n.
\]
Thus \(A\) and \(B\) violate the condition in Theorem~\ref{simpler characterization}, and therefore \(A\) is not matched to \(B\).
\end{proof}

\begin{corollary}[Projective threshold and matching property] \label{projective threshold and matching property}
Let \(K\subsetneq L\) be a finite extension of fields, and let
\[
G = L^\times/K^\times
\]
be the projective independence group of Proposition~\ref{projective independence group}. Then
\[
\mu(G)
=
\min\bigl\{
[F:K]:
K\subsetneq F\subseteq L
\text{ is an intermediate field}
\bigr\}.
\]
Moreover, \(G\) has the matching property if and only if the extension \(K \subsetneq L\) has no nontrivial proper intermediate field.
\end{corollary}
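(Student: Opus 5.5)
The plan is to relate arbitrary submonoids of finite rank to intermediate fields via Lemma~\ref{flat submonoid closure} and Proposition~\ref{projective submonoids}, and then read off the matching property from Theorem~\ref{MMP}.

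First, since \([L:K]<\infty\), the rank formula in Proposition~\ref{projective independence group} gives \(\rho(G)=\dim_K L=[L:K]<\infty\), so every subset of \(G\) has finite rank.

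Next I establish the formula for \(\mu(G)\) by proving two inequalities.

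\begin{itemize}
\item[(a)] Let \(H\) be a submonoid with \(1<\rho(H)<\infty\). By Lemma~\ref{flat submonoid closure}, \(M=\textup{cl}(H)\) is a flat submonoid with \(\rho(M)=\rho(H)\). By Proposition~\ref{projective submonoids}, \(M=F^\times/K^\times\) for some intermediate field \(F\), and \(\rho(M)=[F:K]\). Since \(\rho(M)>1\), we get \(F\neq K\). Hence \(\rho(H)=[F:K]\) with \(K\subsetneq F\subseteq L\), so \(\mu(G)\) is at least the stated minimum.
\item[(b)] For the reverse inequality, take any intermediate field \(F\) with \(K\subsetneq F\subseteq L\). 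Then \(F^\times/K^\times\) is a submonoid of rank \([F:K]\), which lies in \((1,\infty)\). So \(\mu(G)\) is at most \([F:K]\).
\end{itemize}

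The minimum is over a nonempty set, since \(F=L\) qualifies because \(K\neq L\). Therefore \(\mu(G)\) is finite and equals the stated minimum.

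For the matching property, I apply Theorem~\ref{MMP}; the group \(G\) is modular and abelian by Proposition~\ref{projective independence group}. The theorem says \(G\) has the matching property if and only if there is no submonoid \(H\) with \(1<\rho(H)<\rho(G)=[L:K]\), the finiteness condition being automatic here. By the correspondence in (a) and (b), the ranks of submonoids of rank greater than \(1\) are exactly the degrees \([F:K]\) with \(F\neq K\). Such a submonoid has rank below \([L:K]\) precisely when \([F:K]<[L:K]\), that is, when \(F\neq L\). Thus a forbidden \(H\) exists if and only if there is an intermediate field with \(K\subsetneq F\subsetneq L\).

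The main obstacle is the reduction in step (a). I must check that \(\rho(H)>1\) forces \(F\neq K\), which holds because \(\rho(K^\times/K^\times)=1\). I must also check that the rank is preserved when passing from \(H\) to \(\textup{cl}(H)\), which Lemma~\ref{flat submonoid closure} supplies.
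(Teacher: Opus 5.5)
Your proposal is correct and follows the paper's proof essentially step for step: you pass from an arbitrary submonoid \(H\) to the flat submonoid \(\textup{cl}(H)\) via Lemma~\ref{flat submonoid closure}, identify it with \(F^\times/K^\times\) via Proposition~\ref{projective submonoids}, and then read off the matching property from Theorem~\ref{MMP} using \(\rho(G)=[L:K]\). Your explicit checks that \(\rho(H)>1\) forces \(F\neq K\), and that the indexing set is nonempty (take \(F=L\)), are small details the paper leaves implicit.
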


\begin{proof}
By Proposition~\ref{projective independence group}, we have \( \rho(G)=[L:K]<\infty \). Let \(H\) be a submonoid of \(G\) with \(\rho(H)>1\). Since
\(H\subseteq G\), it has finite rank. By Lemma~\ref{flat submonoid closure}, \( M = \textup{cl}(H) \) is a flat submonoid of \(G\) satisfying \( \rho(M)=\rho(H) \). Proposition~\ref{projective submonoids} therefore gives a unique intermediate field \(F\), with \(K\subsetneq F\subseteq L\), such that
\[
M=F^\times/K^\times
\qquad\text{and}\qquad
\rho(H)=\rho(M)=[F:K].
\]

Conversely, every intermediate field \(F\) satisfying \(K\subsetneq F\subseteq L\) determines, by Proposition~\ref{projective submonoids}, a flat submonoid
\( F^\times/K^\times \) of rank \([F:K]>1\).  Definition~\ref{submonoid rank threshold} now gives
\[
\mu(G)
=
\min\bigl\{
[F:K]:
K\subsetneq F\subseteq L
\text{ is an intermediate field}
\bigr\}.
\]

Finally, Theorem~\ref{MMP} shows that \(G\) has the matching property if and only if there is no submonoid \(H\) of \(G\) satisfying
\[
1<\rho(H)<\rho(G)=[L:K].
\]
By the correspondence established above, such a submonoid exists if and only if there is an intermediate field \(F\) satisfying \( K\subsetneq F\subsetneq L\). This proves the final assertion.
\end{proof}

\begin{example}  \label{projective C21 example}
We examine the matching behavior of two different independence structures on the same group.  Consider the finite field extension
\[
K \,=\, \mathbb F_4 \,\subseteq\, L \,=\, \mathbb F_{64},
\]
and note that \([L:K]=3\).  Let \( G = L^\times/K^\times \).  Since the multiplicative group of a finite field is cyclic,
\[
G \,=\, \mathbb F_{64}^{\times}/\mathbb F_4^{\times} \,\cong\, C_{21}.
\]

We first equip \(G\) with the projective independence structure associated with \(\mathbb F_4\subseteq\mathbb F_{64}\). Since the extension has prime degree, it has no proper intermediate field. Therefore, by Corollary~\ref{projective threshold and matching property}, \(G\) has the matching property with this structure.

Now we equip the same abstract group \(G\cong C_{21}\) with the universal independence structure \( \mathcal{I} \). Recall that, relative to \( \mathcal{I} \), we have \( \rho(X)=|X|\) for all \(X\subseteq G\). The group \(C_{21}\) has a subgroup \(U\) of order \(3\); this is a submonoid satisfying
\[
1 \,<\, \rho(U) \,=\, 3 \,<\, 21 \,=\, \rho(G).
\]
Therefore, by Theorem~\ref{MMP}, \( G \) equipped with \( \mathcal{I} \) does not have the matching property.

Thus two independence structures on the same abstract group can produce different matching behavior.
\end{example}

We will use the following lemma in the proof of Theorem~\ref{threshold product growth}.

\begin{lemma} \label{product of finite rank}
Let \( G \) be an abelian independence group, and let \( X \) and \( Y \) be nonempty finite-rank subsets of \( G \). Then the product set \( XY \) has finite rank.
\end{lemma}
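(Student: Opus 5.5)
Fix bases $\mathcal{X}$ of $X$ and $\mathcal{Y}$ of $Y$; both are finite, with $|\mathcal{X}|=\rho(X)$ and $|\mathcal{Y}|=\rho(Y)$. The plan is to show that $XY\subseteq \textup{cl}(\mathcal{X}\mathcal{Y})$. The set $\mathcal{X}\mathcal{Y}$ has at most $\rho(X)\rho(Y)$ elements, so this containment gives
\[
\rho(XY)\,\leq\,\rho\big(\textup{cl}(\mathcal{X}\mathcal{Y})\big)\,=\,\rho(\mathcal{X}\mathcal{Y})\,\leq\,|\mathcal{X}\mathcal{Y}|\,\leq\,\rho(X)\rho(Y),
\]
by monotonicity and Proposition~\ref{closure properties}(iv). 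The last step uses that the rank of a finite set is at most its cardinality, since any basis of it is one of its subsets.

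To obtain the containment, we first note that $X\subseteq\textup{cl}(\mathcal{X})$ and $Y\subseteq\textup{cl}(\mathcal{Y})$. This is exactly the observation $A\mid\mathcal{A}$ for a basis $\mathcal{A}$ of $A$, which appears in the proof of Proposition~\ref{closure properties}(iv). Next, Lemma~\ref{product and closure}(ii) gives $\mathcal{X}\,\textup{cl}(\mathcal{Y})\subseteq\textup{cl}(\mathcal{X}\mathcal{Y})$, and therefore $\mathcal{X}Y\subseteq \textup{cl}(\mathcal{X}\mathcal{Y})$. Applying Lemma~\ref{product and closure}(ii) again, together with commutativity, yields
\[
XY \,=\, Y X \,\subseteq\, Y\,\textup{cl}(\mathcal{X})\,\subseteq\,\textup{cl}(Y\mathcal{X}) \,=\, \textup{cl}(\mathcal{X}Y)\,\subseteq\,\textup{cl}\big(\textup{cl}(\mathcal{X}\mathcal{Y})\big)\,=\,\textup{cl}(\mathcal{X}\mathcal{Y}),
\]
where the final equalities use monotonicity and idempotence of the closure (Proposition~\ref{closure properties}(ii),(iii)).

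The argument contains no real obstacle. The only point needing care is the order of the multiplications, because Lemma~\ref{product and closure}(ii) puts the closure on the right factor, and commutativity is used precisely to move $\textup{cl}(\mathcal{X})$ into that position. Nonemptiness of $X$ and $Y$ is not essential to the argument. If the rank happens to be $0$ (every element a loop), the bound still holds trivially.
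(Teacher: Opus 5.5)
Your proof is correct and takes essentially the same route as the paper. Both arguments show $XY\subseteq \textup{cl}(\mathcal{X}\mathcal{Y})$ using two applications of Lemma~\ref{product and closure}(ii) together with commutativity, and then bound $\rho(XY)$ by $|\mathcal{X}\mathcal{Y}|$; the only cosmetic differences are that the paper passes through $\textup{cl}(\mathcal{X})\textup{cl}(\mathcal{Y})$ rather than $\mathcal{X}Y$, and that you also record the explicit bound $\rho(XY)\leq\rho(X)\rho(Y)$.
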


\begin{proof}
Let $\mathcal{X}$ and $\mathcal{Y}$ be bases of $X$ and $Y$, respectively.  Observe that both \( \mathcal{X} \) and \( \mathcal{Y} \) are finite, since $X$ and $Y$ have finite rank, and
\begin{equation*}
X\subseteq \operatorname{cl}(\mathcal{X})
\qquad\text{and}\qquad
Y\subseteq \operatorname{cl}(\mathcal{Y}).
\end{equation*}
Using commutativity, Proposition~\ref{closure properties}, and Lemma~\ref{product and closure}, we obtain
\[
\textup{cl}(\mathcal{X})\textup{cl}(\mathcal{Y})
\,\subseteq\,
\textup{cl}\big(\textup{cl}(\mathcal{X})\mathcal{Y}\big)
\,=\,
\textup{cl}\Bigl(\mathcal{Y}\big(\textup{cl}(\mathcal{X})\big)\Bigr)
\,\subseteq\,
\textup{cl}\big(\textup{cl}(\mathcal{Y}\mathcal{X})\big)
\,=\,\textup{cl}(\mathcal{X}\mathcal{Y}).
\]
Consequently,
\[
XY \,\subseteq\, 
\textup{cl}(\mathcal{X})\textup{cl}(\mathcal{Y}) \,\subseteq\, \textup{cl}(\mathcal{X}\mathcal{Y}).
\]
Since the product set \( \mathcal{X}\mathcal{Y}\)  is finite, we conclude that
\[
\rho(XY)
\,\leq\, \rho\big(\textup{cl}(\mathcal{X}\mathcal{Y})\big)
\,=\, \rho(\mathcal{X}\mathcal{Y})
\,\leq\, |\mathcal{X}\mathcal{Y}|
\, < \, \infty.
\]
\end{proof}

\begin{remark}
The assumption of commutativity in Lemma~\ref{product of finite rank} cannot be dropped.  To see this, let \( G \) be the free group on the symbols \( a \) and \(b\), and give \( G \) the coset-partition independence structure (as described in Proposition~\ref{coset partition proposition}) associated with the subgroup \( H \) generated by \( a \).  Then each of the sets \( X = H \) and \(Y = bH \) has rank $1$, while the product
\begin{equation*}
XY = \bigcup_{n \in \mathbb{Z}}a^nbH
\end{equation*}
meets infinitely many left cosets of \( H \) and hence has infinite rank. 
\end{remark}

We are ready to present our main result on product growth for subsets of a modular abelian independence group.  An important ingredient in the proof will be the generalized $e$-transform (Theorem~\ref{Dyson G}).

\begin{theorem}  \label{threshold product growth}
Let \(G\) be a modular abelian independence group and let \(X,Y\) be nonempty finite-rank subsets of \( G \).  Then:
\begin{itemize}
\item[(i)] We have
\[ \rho(XY) \,\geq\, \min\bigl\{ \mu(G),\rho(X)+\rho(Y)-1 \bigr\}, 
\] 
where \( \mu(G) \) is the submonoid-rank threshold of \( G \).
\item[(ii)]  There is a nonempty flat \( S \subseteq \textup{cl}(XY) \) and a finite-rank flat submonoid \( M \) of \( G \) such that \( SM = S \) and 
\[
\rho(X) + \rho(Y) - \rho(S) \,\leq\, \rho(M) \,\leq\, \rho(S) \,\leq\, \rho(XY).
\]
Consequently, if 
\[
\rho(XY) \,<\, \rho(X) + \rho(Y) - 1,
\]
then \( \rho(M) > 1.\)
\end{itemize}
\end{theorem}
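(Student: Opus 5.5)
Our plan is to reduce everything to a single application of Theorem~\ref{Dyson G}, with suitably chosen flats, and then read off both parts.

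\textbf{Setting up.} Fix \(x_0\in X\) and \(y_0\in Y\). By translation invariance (Proposition~\ref{preservation of flatness and rank}) and commutativity, we may replace \(X,Y\) by \(x_0^{-1}X\) and \(y_0^{-1}Y\). The ranks of \(X\), \(Y\) and \(XY\) are unchanged, and \(\textup{cl}(XY)\) is simply translated. So we assume \(1\in Y\). Put
\[
A=\textup{cl}(XY),\qquad S_0=\textup{cl}(X),\qquad R_0=\textup{cl}(Y).
\]
By Lemma~\ref{product of finite rank}, \(A\) has finite rank. All three sets are nonempty finite-rank flats, and \(1\in R_0\). By Lemma~\ref{product and closure} and commutativity, \(S_0R_0\subseteq \textup{cl}(\textup{cl}(X)Y)\subseteq\textup{cl}(XY)=A\).

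\textbf{Applying the \(e\)-transform.} Theorem~\ref{Dyson G} gives nonempty finite-rank flats \(S,R'\) with \(S R'=S\subseteq A\), \(1\in R'\), and \(\rho(S)+\rho(R')=\rho(X)+\rho(Y)\). Let \(M=\textup{cl}(\langle R'\rangle)\). By Lemma~\ref{monoid lemma}(i), \(S\langle R'\rangle\subseteq S\). Then Lemma~\ref{product and closure} gives \(SM\subseteq\textup{cl}(S\langle R'\rangle)\subseteq S\), and since \(1\in M\), we get \(SM=S\). By Lemma~\ref{flat submonoid closure}, \(M\) is a flat submonoid with \(\rho(M)=\rho(\langle R'\rangle)\). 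Lemma~\ref{monoid lemma}(ii) gives \(\rho(M)\le\rho(S)\), which is finite. Since \(R'\subseteq M\), we get \(\rho(M)\ge\rho(R')=\rho(X)+\rho(Y)-\rho(S)\). Finally \(\rho(S)\le\rho(A)=\rho(XY)\), which proves the chain in (ii).

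For the consequence in (ii): if \(\rho(XY)<\rho(X)+\rho(Y)-1\), then \(\rho(M)\ge\rho(X)+\rho(Y)-\rho(S)\ge \rho(X)+\rho(Y)-\rho(XY)>1\).

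\textbf{Deducing (i).} Suppose \(\rho(XY)<\rho(X)+\rho(Y)-1\). By (ii), \(M\) is a submonoid with \(1<\rho(M)<\infty\), so \(\mu(G)\le\rho(M)\le\rho(S)\le\rho(XY)\). Hence \(\rho(XY)\ge\min\{\mu(G),\rho(X)+\rho(Y)-1\}\) in all cases.

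\textbf{Main point of care.} The real work is already in Theorem~\ref{Dyson G}, so the key step is justifying its hypothesis \(S_0R_0\subseteq A\) via the closure-product inclusions, together with the normalization \(1\in Y\). The passage from \(R'\) to the flat submonoid \(M\) while preserving \(SM=S\) also needs the closure argument above. Everything else is bookkeeping of ranks.
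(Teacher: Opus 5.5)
Your proposal is correct and follows essentially the paper's proof: one application of Theorem~\ref{Dyson G} inside $A=\textup{cl}(XY)$, then passing to the flat submonoid $M=\textup{cl}(\langle R'\rangle)$ and reading off the rank inequalities. The differences are cosmetic. You feed $\textup{cl}(X)$ and $\textup{cl}(Y)$ into the $e$-transform directly, using the same closure--product inclusion the paper uses in its final reduction from arbitrary sets to flats, and you normalize by translation; the translation means you must replace $S$ by $x_0y_0S$ at the end so that it lies in the original $\textup{cl}(XY)$, a step the paper avoids by taking $S_0=Xy$ and $R_0=y^{-1}Y$.
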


\begin{proof}
We first prove the theorem under the additional hypothesis that \( X \) and \( Y \) are flats.  Note that \(XY\) has finite rank, by Lemma~\ref{product of finite rank}. Let \( A = \textup{cl}(XY) \) and observe that \( A \) also has finite rank, according to Proposition~\ref{closure properties}.

Choose \(y\in Y\), and define \( S_0 = Xy \) and \( R_0 = y^{-1}Y\). Then \(S_0\) and \(R_0\) are nonempty finite-rank flats, \(1\in R_0\), and
\[
S_0R_0 \,=\, (Xy)(y^{-1}Y) \,=\, XY \,\subseteq\, A.
\]
Applying Theorem~\ref{Dyson G} to \(A,S_0,R_0\), we obtain nonempty finite-rank flats \(S\) and \(R\) such that
\[
1\in R, \qquad SR \,=\, S \,\subseteq\, A,
\]
and
\begin{equation}
\label{threshold rank preservation}
\rho(S)+\rho(R) \,=\, \rho(S_0)+\rho(R_0) \,=\, \rho(X)+\rho(Y).
\end{equation}

Define \( H = \langle R\rangle \). Since \(SR\subseteq S\), Lemma~\ref{monoid lemma} gives us
\[
SH \,\subseteq\, S
\qquad\text{and}\qquad
\rho(H) \,\leq\, \rho(S).
\]
Thus \( H \) has finite rank.  Regarding the inclusion, we also have \( S \subseteq SH \), since \(1 \in H\). Hence \( SH=S \).  Let \( M = \textup{cl}(H)\) and observe that, by Lemma~\ref{flat submonoid closure}, \(M\) is a flat submonoid of \( G \) satisfying \( \rho(M) = \rho(H) \).  Thus 
\begin{equation}
\label{monoid closure rank}
\rho(M) \,=\, \rho(H) \,\leq\, \rho(S) \,\leq\, \rho(A) = \rho(XY).
\end{equation}

Next observe that \(SM=S\): by Lemma~\ref{product and closure}, we have
\[
SM \,=\, S\big(\textup{cl}(H)\big) \,\subseteq\, \textup{cl}(SH) \,=\, \textup{cl}(S) \,=\, S,
\]
and the reverse inclusion follows from \(1\in M\).

Now, since \(R\subseteq H\subseteq M\), equation \eqref{threshold rank preservation} gives 
\[
\rho(X)+\rho(Y) \,=\, \rho(S)+\rho(R) \,\leq\, \rho(S)+\rho(M).
\]
From this we obtain
\begin{equation}
\label{monoid lower bound}
\rho(M) \,\geq\, \rho(X)+\rho(Y)-\rho(S).
\end{equation}
In view of (\ref{monoid closure rank}) and (\ref{monoid lower bound}), the first assertion in (ii) is proved.

If \( \rho(XY)\geq \rho(X)+\rho(Y)-1 \), then the inequality in (i) holds and there is nothing left to do for (ii). Assume that \( \rho(XY) < \rho(X)+\rho(Y)-1 \). Then we can extend (\ref{monoid lower bound}) to
\[
\rho(M) \,\geq\, \rho(X)+\rho(Y)-\rho(S) \,\geq\, \rho(X)+\rho(Y)-\rho(XY) \,>\, 1.
\]
Since \( M \) has finite rank, it follows from the above (and Definition~\ref{submonoid rank threshold}) that \( \mu(G) \,\leq\, \rho(M) \).  From this and (\ref{monoid closure rank}), we see that the inequality in (i) again holds, and the second assertion in (ii) is proved, as well. 

Finally, we remove the assumption that \( X \) and \( Y \) are flats.  Suppose \( X,Y \) are arbitrary nonempty finite-rank subsets of \( G \), and put
\[
X_0 \,=\, \textup{cl}(X)
\qquad\text{and}\qquad
Y_0 \,=\, \textup{cl}(Y).
\]
Then
\[
\textup{cl}(XY) \,=\, \textup{cl}(X_0Y_0).
\]
Indeed, by Lemma~\ref{product and closure},
\[
X\big(\textup{cl}(Y)\big) \,\subseteq\, \textup{cl}(XY). 
\]
Using commutativity and applying Lemma~\ref{product and closure} again, we get
\[
\textup{cl}(X)\textup{cl}(Y)
\,=\,
\textup{cl}(Y)\textup{cl}(X)
\,\subseteq\,
\textup{cl}\bigl(\textup{cl}(Y)X\bigr)
\,=\,
\textup{cl}\Bigl(X\big(\textup{cl}(Y)\big)\Bigr)
\,\subseteq\,
\textup{cl}(XY).
\]
Hence
\[
\textup{cl}(X_0Y_0)\,\subseteq\,\textup{cl}(XY).
\]
The reverse inclusion follows from $XY\subseteq X_0Y_0$. 

Note, in addition, that the above and Proposition~\ref{closure properties} give
\[
\rho(X_0)=\rho(X),
\qquad
\rho(Y_0)=\rho(Y), \qquad \text{and} \qquad \rho(X_0Y_0)=\rho(XY).
\]
We can therefore apply the preceding argument to \( X_0 \) and \( Y_0 \) and then replace these sets with \( X \) and \( Y \) at the conclusion.
\end{proof}

\begin{remark}  \label{recover Cauchy davenport}
In Theorem~\ref{threshold product growth}, let us take \( G \) to be the additive group \( \mathbb{Z}/p\mathbb{Z} \), where \( p \) is a prime, and \( \mathcal{I} \) to be the universal structure. Then \( \rho(X) = |X| \) for all \( X \subseteq \mathbb{Z}/p\mathbb{Z} \), and \( \mu(G) = p \).  From part (i) of the theorem, we recover the Cauchy--Davenport inequality:
\[
|X + Y|
\,\geq\,
\min\bigl\{p, |X|+|Y|-1 \bigr\}
\]
for all nonempty \( X,Y \subseteq \mathbb{Z}/p\mathbb{Z} \).
\end{remark}

\begin{remark} \label{nontrivial submonoid gives a deficient product}
Let \( M \) and \( S \) be as in part (ii) of Theorem~\ref{threshold product growth}.  If \( \rho(M) > 1 \), then 
\[
\rho(SM) = \rho(S) < \rho(S) + \rho(M) - 1.
\]
\end{remark}

\begin{corollary}  \label{cauchy davenport equivalence}
Let \(G\) be a modular abelian independence group. Then \(G\) has the matching property if and only if, for every pair of nonempty finite-rank subsets \(X,Y\) of  \(G\),
\begin{equation*}
\rho(XY)
\,\geq\,
\min\bigl\{
\rho(G),\rho(X)+\rho(Y)-1
\bigr\}.
\end{equation*}
\end{corollary}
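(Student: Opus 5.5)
We prove the two implications separately, using Theorem~\ref{MMP} to translate the matching property into the absence of a submonoid \(H\) with \(1<\rho(H)<\rho(G)\) and \(\rho(H)<\infty\). The forward direction will follow from Theorem~\ref{threshold product growth}(i). The converse will follow by exhibiting a product \(XY\) that violates the inequality whenever such a submonoid exists.

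\emph{Forward direction.} Assume \(G\) has the matching property, and let \(X,Y\) be nonempty finite-rank subsets. By Theorem~\ref{threshold product growth}(i) it suffices to show that
\[
\min\{\mu(G),\rho(X)+\rho(Y)-1\}\,\geq\,\min\{\rho(G),\rho(X)+\rho(Y)-1\}.
\]
If \(\mu(G)=\infty\) this is clear. Otherwise there is a submonoid \(H\) with \(1<\rho(H)=\mu(G)<\infty\). Theorem~\ref{MMP} forbids \(\rho(H)<\rho(G)\), so \(\mu(G)=\rho(H)\geq\rho(G)\), and the desired inequality follows.

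\emph{Converse.} Assume the inequality holds for all nonempty finite-rank \(X,Y\), and suppose for contradiction that \(G\) lacks the matching property. By Theorem~\ref{MMP} there is a submonoid \(H\) with \(1<m=\rho(H)<\rho(G)\) and \(m<\infty\). By Lemma~\ref{flat submonoid closure}, we may replace \(H\) by the flat submonoid \(M=\textup{cl}(H)\), which also has rank \(m\). Take \(X=Y=M\). Then \(XY=MM=M\), since \(M\) is a submonoid containing \(1\). Hence \(\rho(XY)=m\), while
\[
\min\{\rho(G),2m-1\}>m,
\]
because \(\rho(G)>m\) and \(2m-1>m\) (as \(m\geq 2\)). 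This contradicts the assumed inequality, so \(G\) has the matching property.

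The only delicate point is the forward direction when \(\rho(G)\) is infinite: there the target inequality reads \(\rho(XY)\geq\rho(X)+\rho(Y)-1\), so we must rule out \(\mu(G)\) being finite. This is handled by the same appeal to Theorem~\ref{MMP}, since a submonoid of finite rank \(\mu(G)>1\) would satisfy \(\mu(G)<\rho(G)=\infty\), which Theorem~\ref{MMP} forbids; hence \(\mu(G)=\infty\). The rest is bookkeeping with minima.
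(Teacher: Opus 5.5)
Your proof is correct and follows the paper's argument. The forward direction combines Theorem~\ref{MMP} with Theorem~\ref{threshold product growth}(i) to get $\mu(G)\geq\rho(G)$, or $\mu(G)=\infty$ when $\rho(G)$ is infinite, and the converse tests $X=Y$ equal to an offending submonoid. The one difference is your passage to $\textup{cl}(H)$ via Lemma~\ref{flat submonoid closure}. It is harmless but unnecessary, since $HH=H$ already holds for any submonoid $H$ and the hypothesis applies to arbitrary nonempty finite-rank sets, not just flats.
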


\begin{proof}
Assume first that \(G\) has the matching property. By Theorem~\ref{MMP}, there is no finite-rank submonoid \(H\) of \(G\) such that 
\[ 
1<\rho(H)<\rho(G). 
\] 
If \(\rho(G)\) is finite, Definition~\ref{submonoid rank threshold} therefore gives \( \mu(G)\geq\rho(G) \). If \(\rho(G)\) is infinite, any finite-rank submonoid of rank greater than one would have rank strictly smaller than \(\rho(G)\); hence no such submonoid exists and \( \mu(G)=\infty \). Thus, in either case, Theorem~\ref{threshold product growth} gives 
\[
\rho(XY)
\,\geq\,
\min\bigl\{
\rho(G),\rho(X)+\rho(Y)-1
\bigr\}
\]
for all nonempty finite-rank subsets \(X,Y\) of \( G \).

Conversely, suppose that the inequality in the statement holds and assume for a contradiction that \(G\) does not have the matching property. By Theorem~\ref{MMP}, there exists a submonoid \(H\) of \(G\) such that
\[
1<m=\rho(H)<\rho(G)
\qquad\text{and}\qquad
m<\infty.
\]
Applying the assumed inequality with \(X=Y=H\) and using the fact that \( HH=H \), we obtain
\[
m \,=\, \rho(H) \,=\, \rho(HH) \,\geq\, \min\bigl\{ \rho(G),2m-1 \bigr\}.
\]
But
\[
m<\rho(G)
\qquad\text{and}\qquad
m<2m-1,
\]
giving a contradiction.
\end{proof}

\begin{proposition}[Thresholds for coset-partition independence groups]
\label{coset partition threshold}
Let \(G\) be an abelian group and let \(H\) be a subgroup such that \(G/H\) is finite.  Equip \( G \) with the coset-partition structure \( \mathcal{I}_H \) of Proposition~\ref{coset partition proposition}. If \(G/H\) is nontrivial, then
\[
\mu(G)
=
\min\bigl\{
|Q|:Q\text{ is a nontrivial subgroup of }G/H \bigr\},
\]
so that \(\mu(G)\) is the smallest prime divisor of \(|G/H|\). If instead \(G=H\), then \(\mu(G)=\infty\).

Moreover, \(G\) has the matching property if and only if \(G/H\) is trivial or has prime order.
\end{proposition}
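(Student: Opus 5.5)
The plan is to reduce everything to the quotient map \(\pi\colon G\to G/H\), using the formulas \(\rho(X)=|\pi(X)|\) and \(\textup{cl}(X)=XH\) from Proposition~\ref{coset partition proposition}. Since \(G\) is abelian, left and right cosets coincide, so \(\pi\) is a homomorphism.

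\textbf{Upper bound for \(\mu(G)\).} Assume \(G/H\) is nontrivial and let \(Q\) be a nontrivial subgroup of \(G/H\). Its preimage \(\pi^{-1}(Q)\) is a subgroup, hence a submonoid, of \(G\). Its rank is \(|Q|\), which satisfies \(1<|Q|<\infty\). Therefore \(\mu(G)\le\min|Q|\).

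\textbf{Lower bound for \(\mu(G)\).} Let \(M\) be any submonoid with \(1<\rho(M)<\infty\). Then \(\pi(M)\) is a submonoid of the finite group \(G/H\), and \(\rho(M)=|\pi(M)|\). A submonoid of a finite group is a subgroup: for each \(q\), some power \(q^k\) equals \(1\), so \(q^{-1}=q^{k-1}\) lies in it. Since \(|\pi(M)|>1\), the group \(\pi(M)\) is a nontrivial subgroup, and so \(\rho(M)\ge\min|Q|\). This gives the displayed formula.

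By Cauchy's theorem, the smallest order of a nontrivial subgroup of the finite abelian group \(G/H\) is its smallest prime divisor \(p\). Indeed, a subgroup of order \(p\) exists, and any nontrivial subgroup has order divisible by some prime dividing \(|G/H|\), hence order at least \(p\).

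\textbf{The case \(G=H\).} Here every nonempty set has rank \(1\), so no submonoid has rank greater than \(1\), and \(\mu(G)=\infty\) by the note after Definition~\ref{submonoid rank threshold}.

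\textbf{Matching property.} We apply Theorem~\ref{MMP}, noting that \(\rho(G)=|G/H|\) is finite and \(G\) is a modular abelian independence group. From the two bounds above, the ranks of finite-rank submonoids with rank greater than \(1\) are exactly the orders of the nontrivial subgroups \(Q\) of \(G/H\). In one direction, each such rank is \(|\pi(M)|\) for a subgroup \(\pi(M)\). In the other, each \(Q\) is realized by the preimage \(\pi^{-1}(Q)\).

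So a submonoid with \(1<\rho(H')<\rho(G)\) exists if and only if \(G/H\) has a nontrivial proper subgroup. For a finite abelian group this fails exactly when the group is trivial or of prime order. One direction uses Lagrange's theorem. For the other, if \(|G/H|\) is composite, Cauchy's theorem at a prime \(p<|G/H|\) gives a nontrivial proper subgroup.

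\textbf{Main obstacle.} The only point needing care is that a submonoid \(M\) need not be a subgroup of \(G\), since \(G\) may be infinite, for instance \(\mathbb{N}\subseteq\mathbb{Z}\). This is handled by passing to the finite image \(\pi(M)\), which is automatically a subgroup, and using \(\rho(M)=|\pi(M)|\).
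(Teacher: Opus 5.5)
Your proposal is correct and follows essentially the same route as the paper. Both arguments pass to the quotient map $\pi\colon G\to G/H$, use $\rho(M)=|\pi(M)|$ together with the facts that images of submonoids in the finite group $G/H$ are subgroups and that preimages $\pi^{-1}(Q)$ realize every subgroup order, and then finish with Cauchy/Lagrange and Theorem~\ref{MMP}. Your explicit argument that a submonoid of a finite group is a subgroup simply fills in what the paper states as ``owing to the finiteness of $G/H$.''
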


\begin{proof}
Let
\[
\pi:G\longrightarrow G/H
\]
be the quotient map. The image under \( \pi \) of any submonoid \( M \) of \( G \) is a subgroup \( \pi(M) \) of \( G/H \), owing to the finiteness of \( G/H \), and we have \( \rho(M) = |\pi(M)| \) by Proposition~\ref{coset partition proposition}. Furthermore, every subgroup \( Q \) of \( G/H \) is of the form \( \pi(M) \), where \( M = \pi^{-1}(Q) \) is a subgroup (and hence a submonoid) of \( G \).  It follows that 
\begin{equation} \label{submonoid subgroup equation}
\begin{aligned}
\bigl\{ \rho(M) : M\text{ is a submonoid  of }G \bigr\}  = \bigl\{ |Q| : Q\text{ is a subgroup of }G/H \bigr\}.
\end{aligned}
\end{equation}

Therefore, if \( G = H \), we clearly have
\[
\bigl\{ \rho(M) : M\text{ is a submonoid  of }G \text{ and } 1 < \rho(M) \bigr\}  = \emptyset,
\]
and so \( \mu(G) = \infty \) by Definition~\ref{submonoid rank threshold}.

Suppose that \( G/H \) is nontrivial.  Then \( |G/H| \) has at least one prime divisor, and moreover, by Cauchy's theorem, for each prime \( p \) dividing \( |G/H| \), there exists a subgroup of \( G/H \) of order \( p \).  Therefore, by Lagrange's theorem, Definition~\ref{submonoid rank threshold}, and equation~(\ref{submonoid subgroup equation}),
\[ 
\mu(G) = \min\bigl\{ |Q| : Q\text{ is a nontrivial subgroup of }G/H \bigr\} 
\]
and this value is the smallest prime divisor of \( |G/H| \).

Finally, since \( \rho(G)=|G/H| \), it follows from  Theorem~\ref{MMP} and equation~(\ref{submonoid subgroup equation}) that \( G \) has the matching property if and only if \(G/H\) has no subgroup \(Q\) satisfying
\[
1<|Q|<|G/H|.
\]
A finite group has no nontrivial proper subgroup precisely when it
is trivial or has prime order. This completes the proof.
\end{proof}

\begin{example} \label{coset threshold example}
Let \( G=C_{30} \) (cyclic group of order 30) and let \(H\) be the unique subgroup of order \(2\). Equip \( G \) with the coset-partition independence structure \( \mathcal{I}_H \). Then \( G/H\cong C_{15} \), and since the smallest prime divisor of \(15\) is \(3\), Proposition~\ref{coset partition threshold} gives
\[
\mu(G)=3.
\]
In addition, since \(G/H\) has composite order, \(G\) does not have the matching property.

This example also shows that the bound in Theorem~\ref{threshold product growth} can be attained. Let \(Q\) be the subgroup of \( G/H \) of order \(3\), and put \( U=\pi^{-1}(Q)\), where \(\pi\) is the quotient map \( G\rightarrow G/H\). Then \(U\) is a subgroup of \(G\) containing \( H \), and
\[
\rho(U)=|\pi(U)|=|Q|=3.
\]
Taking \(X=Y=U\), we obtain
\[
\rho(XY) \,=\, \rho(U) \,=\, 3
\]
and
\[
\rho(X)+\rho(Y)-1 \,=\, 5.
\]
Consequently,
\[
\rho(XY) \,=\,
\min\bigl\{\mu(G),\rho(X)+\rho(Y)-1 \bigr\} = 3.
\]

Finally, we observe that the pairing \((G,\mathcal{I}_H)\) cannot arise from the projective construction. Indeed, for every \(g\in G\), we have \( \textup{cl}(\{g\})=gH \), which has two elements, whereas every singleton is a flat in a projective independence group. 
\end{example}

\begin{example}  \label{cycle space mu and matching}
Let \( \Gamma = (\mathsf{V},\mathsf{E}) \) be a finite simple graph and let \( G = 2^{\mathsf{E}} \) be the associated cycle-space independence group from Example~\ref{cycle space independence example}. Recall that \( G \) is a coset-partition group, defined relative to the cycle space \( \mathcal{Z}(\Gamma) \). Thus \( G \) is modular. The group operation is symmetric difference, so \( G \) is Boolean.  

We will compute the submonoid-rank threshold of \( G \) and determine whether or not \( G \) has the matching property. First observe that no element of \( \mathcal{Z}(\Gamma) \) can be a singleton or doubleton, so the cosets \( X\triangle \mathcal{Z}(\Gamma) \) for \( X \in G \) with \( |X| \leq 1 \) are distinct, i.e., \( \rho(G) \geq |\mathsf{E}| + 1 \).

If \( \mathsf{E} \) is empty, then \( G \) is trivial.  Hence \( \mu(G) = \infty \) and \( G \) has the matching property.

If \( |\mathsf{E}| = 1\), then \( G \) has order 2.  Hence \( \mu(G) = 2 \) and the condition in Theorem~\ref{MMP} is vacuously satisfied, so that \( G \) has the matching property.

Suppose that \( |\mathsf{E}| \geq 2\).  Then \( \rho(G) \geq |\mathsf{E}| + 1 \geq 3 \), and since \( \{ \emptyset, \{ e \} \} \) is a subgroup of \( G \) for any \( e \in \mathsf{E} \), it follows that \( \mu(G) = 2 \) and, by Theorem~\ref{MMP}, \( G \) does not have the matching property. 
\end{example}

\bigskip

\noindent \textbf{Declarations}

\medskip
\noindent\textbf{\small Funding:} {\small The research of Mohsen Aliabadi is partially supported by an AMS--Simons Research Enhancement Grant (award no.~SFI-MPS-T-Institutes-00021934).} 

\medskip
\noindent \textbf{\small Conflict of interest:} {\small The authors declare that they have no conflict of interest.}

\medskip
\noindent \textbf{\small Data availability statement:} {\small Data sharing is not applicable to this article as no datasets were generated or analyzed during the current study.}


\end{document}